\definecolor{ceruleanblue}{rgb}{0.16, 0.32, 0.75}
\let\@wraptoccontribs\wraptoccontribs \makeatother
\newcommand\note[1]%
\numberwithin{equation}{section}
\newtheorem{theorem}[equation]{Theorem}
\newtheorem{lemma}[equation]{Lemma}
\newtheorem{proposition}[equation]{Proposition}
\newtheorem{corollary}[equation]{Corollary}
\theoremstyle{definition}
\newtheorem{definition}[equation]{Definition}
\newtheorem{example}[equation]{Example}
\newtheorem{remark}[equation]{Remark}
 \newcommand\lie{\mathfrak}
\newcommand{\g}{\lie{g}}
\newcommand{\ttt}{\lie{t}}
\newcommand\bb{\mathbb}
 \newcommand\Z{\bb{Z}} \newcommand\Q{\bb{Q}}
\newcommand\R{\bb{R}} \newcommand\C{\bb{C}} 
\newcommand\T{\bb{T}}
\newcommand\ca{\mathscr}
\DeclareMathOperator\ann{ann}
\DeclareMathOperator\Hom{Hom}
\DeclareMathOperator\id{id}
\DeclareMathOperator\Lie{Lie}
\DeclareMathOperator\pr{pr}
\DeclareMathOperator\stab{stab}
\DeclareMathOperator\Vect{Vect}
\newcommand\group[1]{{\text{\bf#1}}}
\newcommand\B{\group{B}}
\newcommand\G{\group{G}}
\newcommand\qu[1][\kern.3ex]{/\kern-.7ex/_{\kern-.4ex#1}}
\newcommand\bigqu[1][\,\,]{\big/\kern-.85ex\big/_{\!\!#1}}
\newcommand\powl{[\kern-.3ex[} \newcommand\powr{]\kern-.3ex]}
\newcommand\bigpowl{\bigl[\kern-.6ex\bigl[}
    \newcommand\bigpowr{\bigr]\kern-.6ex\bigr]}
\newcommand\sur{\mathrel{\to\kern-1.8ex\to}}
\newcommand\longsur{\mathrel{\longrightarrow\kern-1.8ex\to}}
\newcommand{\n}{^{-1}}
\newcommand\bu{\bullet}
\newcommand\coarse{{\mathrm{coarse}}}
\newcommand\stack{\group}
\newcommand\X{\stack{X}} 
\newcommand\Y{\stack{Y}}
\newcommand\stackmorphism{\boldsymbol}
\newcommand\bmu{{\stackmorphism{\mu}}}
\newcommand\bphi{{\stackmorphism{\phi}}}
\newcommand\bomega{{\stackmorphism{\omega}}}
\newcommand\F{{\ca{F}}}
\begin{document}

%%%%%%%%%%%%%%%%%%%%%%%%%%%%%%%%%%%%%%%%%%%%%%%%%%%%%%%%%%%%%%%%%%%%%%%%
%%%%%%%%%%%%%%%%%%%%%%%%%%%%%%%%%%%%%%%%%%%%%%%%%%%%%%%%%%%%%%%%%%%%%%%%

\title{Toric Symplectic Stacks}
\author{Benjamin Hoffman}
%\subjclass[2010]{}
\maketitle
%%%%%%%%%%%%%%%%%%%%%%%%%%%%%%%%%%%%%%%%%%%%%%%%%%%%%%%%%%%%%%%%%%%%%%%%

\begin{abstract}
We give an intrinsic definition of compact toric symplectic stacks, and show that they are classified by simple convex polytopes equipped with some additional combinatorial data. This generalizes Delzant's classification of compact toric symplectic manifolds. As an application, we show that any compact toric symplectic stack can be deformed to an ineffective toric orbifold.
\end{abstract}

\section{Introduction}

A compact connected $2n$-dimensional symplectic manifold $(M,\omega)$ is \emph{toric} if there is an effective Hamiltonian action on $M$ by an $n$-dimensional torus $\T$. The image of $M$ under the moment map $\mu$ is a convex polytope in $\mathfrak{t}^*$, the linear dual of the Lie algebra $\ttt$ of $\T$. The moment polytope has these three propertes:
\begin{itemize}
\item It is \emph{simple}, meaning there are $n$ facets meeting at every vertex.
\item It is \emph{rational}, meaning that normal vectors to the facets of $\mu(V)$ can be chosen in the cocharacter lattice $\Hom(S^1,\T)\cong \Z^n$ in $\ttt\cong \R^n$.
\item It is \emph{smooth}, meaning that at each vertex $v$ of $\mu(M)$, the primitive normal vectors to the facets meeting at $v$ form a $\Z$-basis of $\Hom(S^1,\T)$.
\end{itemize}
In \cite{delzant;hamiltoniens-periodiques}, Delzant showed that the assignment taking $(M,\omega,\mu,\T)$ to its moment polytope $\mu(M)\subset \ttt^*$ gives a one-to-one correspondence between the set of $2n$-dimensional toric symplecic manifolds (up to isomorphism), and the set of $n$-dimensional polytopes which are simple, rational, and smooth (up to the action of $\R^n\rtimes Gl(n,\Z)$ on $\ttt^*$).

In the proof of his theorem, Delzant showed how to reconstruct a compact toric symplectic manifold from its moment polytope. If one follows this construction, beginning with a polytope which is simple and rational, but not smooth, then one recovers not a manifold, but an orbifold. Toric symplectic orbifolds were defined and studied in \cite{lerman-tolman;hamiltonian-torus-actions-symplectic-orbifolds}. The authors proved a generalization of Delzant's theorem: compact toric symplectic orbifolds are classified by simple rational polytopes, with facets labeled by positive integers.

A natural question is what happens when one begins Delzant's construction with a polytope which is simple, but neither rational nor smooth. In this case, one recovers not an orbifold but an \'etale differentiable stack, equipped with an action of a stacky torus (see Section \ref{section3}). The two goals of the present paper are to give an intrinsic definition of a compact toric symplectic stack, and to prove Thereom \ref{maintheorem}, which is a generalization of Delzant's theorem to this context. Theorem \ref{maintheorem} states that compact toric symplectic stacks are classified by simple polytopes, decorated with some additional combinatorial data.

Compact toric symplectic manifolds are beloved examples in the study of Hamiltonian group actions, partly because much of the symplectic geometry of a compact toric symplectic manifold $M$ can be translated into the combinatorics of its moment polytope. For instance, the $\T$-equivariant cohomology of $M$ and the Liouville volume of $(M,\omega)$ can be read from the polytope $\mu(M)$. One possible application of the present work is a description of the geometry of compact toric symplectic stacks in terms of their moment polytopes.

This connection between non-rational polytopes and symplectic geometry has been investigated before, for instance in \cite{katzarkovDefinitionNoncommutativeToric2014, lin-sjamaar;presymplectic, prato;non-rational-symplectic, ratiu-zung;presymplectic}. Our approach differs from these in several important respects. First, we give an intrinsic (atlas-independent) definition of compact toric symplectic stacks. In \cite{ratiu-zung;presymplectic} for instance, the authors assume the existence of a presymplectic atlas with certain restrictions on the null foliation of the presymplectic form. Because we begin with an intrinsic definition of compact toric symplectic stacks, our approach to proving our classification Theorem \ref{maintheorem} is fundamentally different from the proof of the related Theorem 3.6 in \cite{ratiu-zung;presymplectic}. Similarly, our Theorem \ref{convexitythm} says that the image of a compact toric symplectic stack under the moment map is a simple convex polytope. Convexity is addressed by \cite{lin-sjamaar;presymplectic} and \cite{ratiu-zung;presymplectic}; however, these approaches require that the stack in question has an atlas of a particular form. Our proof of Theorem \ref{convexitythm} works with the intrinsic definition of a compact toric symplectic stack in full generality.

Second, our approach differs from others in that we allow for ineffective stacks, which are stacks with a non-trivial global isotropy group. In particular, the notion of a quasilattice was first introduced in \cite{prato;non-rational-symplectic}, but there it was assumed that the structure map $\partial\colon Q\to \R^n$ is injective (in the notation of Sections \ref{section2} and \ref{section3}).

Finally, we extend Delzant's construction to allow for one-parameter families of polytopes and compact toric symplectic stacks. As a consequence, we find that any compact toric symplectic stack can be deformed to an ineffective compact toric symplectic orbifold. An example of a one-parameter family of toric quasifolds (stacks) was given by Battaglia, Prato, and Zaffran in \cite{battaglia-prato-zaffran;one-parameter}.

The theory of toric algebraic stacks has been developed by several sources, and in the most generality by Geraschenko and Satriano in \cite{geraschenko-satriano;toric} and \cite{geraschenko-satriano;toric2}. It is illuminating to compare their toric algebraic stacks (over $\C$) to our compact toric symplectic stacks. Toric algebraic stacks have an action of an algebraic group stack, while compact toric symplectic stacks have an action of a Lie group stack which is often the quotient of a compact torus by a dense immersed subgroup. In particular, by Definition 2.16 of \cite{geraschenko-satriano;toric}, any toric algebraic stack has an open dense substack which is equivalent (as an algebraic stack) to an algebraic group stack. On the other hand, compact toric symplectic stacks have an open dense substack which is equivalent (as a differentiable stack) to the stacky quotient of $(\C^\times)^n$ by a subgroup which can be dense, immersed, and non-algebraic. There are then many examples of compact toric symplectic stacks which do not arise in the algebraic context.

The paper is organized as follows. In Section \ref{section2}, we recall some background on Hamiltonian actions on symplectic stacks, which was developed in detail in \cite{hoffman-sjamaar;hamiltonian-stack}. In Section \ref{section3} we define stacky tori and compact toric symplectic stacks. In Section \ref{section4} we show that compact toric symplectic stacks are locally presented by certain action groupoids. In Section \ref{section5} we define the stacky moment polytope of a compact toric symplectic stack. In Section \ref{section6} we state the main theorem of this article, which is then proved in Sections \ref{section7} through \ref{section9}. In particular, Sections \ref{section7} and \ref{section8} deal with the uniqueness part of Theorem \ref{maintheorem}, while Section \ref{section9} concerns the existence part, as well as deformations of polytopes.

\textbf{Acknowledgements.} We are grateful to R. Sjamaar for suggesting this project, and to  F. De Keersmaeker, N. Hemelsoet, J. Lane, I. Pendleton, R. Sjamaar, and X. Tang for their useful comments and discussions throughout this project. The author was supported in part by the National Science Foundation Graduate Research Fellowship under Grant Number DGE-1650441.

\section{Background on Hamiltonian actions on symplectic stacks}

\label{section2}

In this section we very briefly recall some background material on Hamiltonian actions on stacks. This was developed in detail in \cite{hoffman-sjamaar;hamiltonian-stack}, and the reader is referred there for more information. Some of the material is motivated by~\cite{lerman-malkin;deligne-mumford}. For background on differentiable stacks in general we refer the reader to \cite{bursztyn-noseda-zhu;principal-stacky-groupoids, metzler;topological-smooth-stack, noohi;foundations-topological-stacks}.

\subsection*{Lie groupoids}

Lie groupoids $X_\bu$ have the source, target, identity, inverse, and multiplication structure maps denoted by $s,t,u,(\cdot)\n,$ and $\circ$, respectively. We assume that $X_0$ and $X_1$ are both Hausdorff and second-countable. If $G$ acts on $X$, then we write $G\ltimes X\rightrightarrows X$ for the action groupoid. Sometimes we will denote a groupoid by its manifold of arrows, so for instance $G\ltimes X\rightrightarrows X$ may be written $G\ltimes X$. A smooth manifold $M$ will be identified with the trivial groupoid $M\rightrightarrows M$. If $X_\bu$ is a Lie groupoid and $x\in X_0$, then the isotropy group of $x$ is the group $s\n(x)\cap t\n(x)$ of arrows which start and end at $x$.

Let $\mathbf{LieGpd}$ denote the $2$-category of Lie groupoids; see~\cite{moerdijk-mrcun;foliations-groupoids} for a detailed description of its morphisms and 2-morphisms.

Unless otherwise noted, the symbol $\ltimes$ will be used exclusively for action groupoids, and not for semidirect products of groups.

\subsection*{Morita equivalence}

Let $X$ be a manifold and $Y_\bu$ be a Lie groupoid, and let $\phi\colon X \to Y_0$ be a smooth map. Then $\phi$ is \emph{essentially surjective} if $t\circ\pr_1\colon Y_1\times_{s,Y_0,\phi}X\to Y_0$, which sends
$(g,x)\mapsto t(g)$ is a surjective submersion. More generally, a morphism $\phi_\bu \colon X_\bu\to Y_\bu$ of Lie groupoids is called essentially surjective if the map $\phi_0$ is essentially surjective. A morphism $\phi_\bu\colon X_\bu \to Y_\bu$ is
\emph{fully faithful} if the square
\[
\begin{tikzcd}[row sep=large]
X_1\ar[r,"\phi_1"]\ar[d,"{(s,t)}"']&Y_1\ar[d,"{(s,t)}"]\\
X_0\times X_0\ar[r,"\phi_0\times\phi_0"]&Y_0\times Y_0
\end{tikzcd}
\]
is a fibered product of manifolds.  If $\phi_\bu$ is both essentially
surjective and fully faithful then $\phi_\bu$ is a \emph{Morita
  morphism}.  If there is a zigzag of Morita morphisms $X_\bu\to
Y_\bu$ and $X_\bu\to Z_\bu$, then $Y_\bu$ and $Z_\bu$ are \emph{Morita
  equivalent}. Morita equivalence is an equivalence relation on Lie groupoids, and Morita equivalent groupoids will present equivalent differentiable stacks (see Theorem~\ref{presentationtheorem} below).
    
Let $X$ be a manifold, $Y_\bu$ a Lie groupoid, and $\phi\colon X\to Y_0$ an essentially surjective map, then define the \emph{pullback groupoid} $\phi^*Y_\bu:=(\phi^* Y_1\rightrightarrows \phi^*Y_0 = X)$. The space of arrows of $\phi^* Y_\bu$ is 
 \begin{align*}
\phi^*Y_1
&:=X\times_{\phi,Y_0,s} Y_1\times_{t,Y_0,\phi}X \\
&\hphantom{:}=\{\,(x,g,y)\in X\times Y_1\times
X\mid\phi(x)=s(g)\text{ and }t(g)=\phi(y)\,\}.
\end{align*}
Then $\phi^* Y_\bu$ is Morita equivalent to $Y_\bu$. 

\subsection*{Differentiable stacks}

We consider the 2-category $\mathbf{Stack}$ stacks over the category $\mathbf{Diff}$ of smooth manifolds, with the open cover Grothendieck topology; see~\cite[Section~2]{metzler;topological-smooth-stack} for further introduction to these notions in general. Equivalence of stacks will be denoted by the symbol $\simeq$. We denote stacks and their morphisms by bold letters, eg $\X$ and $\bmu\colon \X \to \R$. 

 Denote by $\B$ the 2-functor which takes a Lie groupoid $X_\bu$ to its associated stack $\B X_\bu$; see~\cite[Section~3.1]{metzler;topological-smooth-stack} and \cite[Definition~3.3,~Section~12]{noohi;foundations-topological-stacks} for a detailed description of this functor. If $\X$ is a stack over $\mathbf{Diff}$, then $\X$ is a \emph{differentiable stack} if there exists a Lie groupoid $X_\bu$, and an equivalence $\B X_\bu\simeq \X$. In this case we say $X_\bu$ \emph{presents} $\X$, and the equivalence $\B X_\bu \simeq \X$ is a \emph{presentation of $\X$}. By the 2-Yoneda lemma, if $M$ is a smooth manifold we identify $M=\B M$. Let $\mathbf{DiffStack}$ denote the full sub-2-category of $\mathbf{Stack}$, whose objects are differentiable stacks. Denote by $\star$ a terminal object in $\mathbf{Stack}$.
 
The following makes more precise the relationship between Lie groupoids and differentiable stacks. It is a restatement of the discussion following~\cite[Proposition~60]{metzler;topological-smooth-stack}. (See also \cite[Theorem~2.2]{behrend-xu;stacks-gerbes}).
 
 \begin{theorem}\label{presentationtheorem}  Let $\phi_\bu\colon X_\bu \to Y_\bu$ be a map of Lie groupoids. Then $\B\phi_\bu \colon \B X_\bu \to \B Y_\bu$ is an equivalence of stacks if and only if $\phi_\bu$ is a Morita morphism. Any map $\bphi\colon \B X_\bu \to \B Y_\bu$ of differentiable stacks is $2$-isomorphic to a map of the form 
 \[
 \B X_\bu \xrightarrow{(\B \psi_\bu)\n} \B X_\bu' \xrightarrow{\B \psi'_\bu} \B Y_\bu,
 \]
 where $\psi_\bu\colon X_\bu'\to X_\bu$ is a Morita morphism, and $(\B \psi_\bu)\n$ is any inverse equivalence to $\B \psi_\bu$. Consequently, $\B X_\bu$ is equivalent to $\B Y_\bu$ if and only if $X_\bu$ is Morita equivalent to $Y_\bu$.
 \end{theorem}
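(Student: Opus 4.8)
The plan is to reduce the statement to the standard dictionary between Lie groupoids and differentiable stacks, using only two inputs recalled earlier: that $\B Y_\bu(M)$ is the groupoid of principal $Y_\bu$-bundles over a manifold $M$, and the pullback-groupoid construction, for which $\phi^*Y_\bu\to Y_\bu$ is a Morita morphism whenever $\phi$ is essentially surjective. I also use the fundamental property of the atlas that the $2$-fibre product $X_0\times_{\B X_\bu}X_0$ is $X_1$, with the two projections equal to $s$ and $t$, so that the square $X_1\rightrightarrows X_0\to\B X_\bu$ carries a canonical natural $2$-isomorphism. I would treat the three assertions in turn.

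For the first assertion, suppose $\phi_\bu$ is a Morita morphism. Since $\phi_0$ is essentially surjective I can form $\phi_0^*Y_\bu$, and there is a canonical morphism of Lie groupoids $X_\bu\to\phi_0^*Y_\bu$ over $X_0$, sending $h\mapsto(s(h),\phi_1(h),t(h))$; by construction of the pullback groupoid this is an \emph{isomorphism} of Lie groupoids precisely because $\phi_\bu$ is fully faithful. As $\phi_0^*Y_\bu\to Y_\bu$ is a Morita morphism, $\B\phi_\bu$ factors through an isomorphism and an equivalence, hence is an equivalence. Conversely, if $\B\phi_\bu$ is an equivalence of stacks then it is fully faithful and (locally) essentially surjective. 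Feeding the canonical atlases into these two conditions: applying $\B\phi_\bu$ to $X_0\times_{\B X_\bu}X_0=X_1$ yields the map $h\mapsto(s(h),\phi_1(h),t(h))$ into $X_0\times_{Y_0}Y_1\times_{Y_0}X_0=X_0\times_{\B Y_\bu}X_0$, which is an isomorphism exactly when the square defining full faithfulness of $\phi_\bu$ is cartesian; and essential surjectivity of $\B\phi_\bu$, tested against the atlas $Y_0\to\B Y_\bu$, unwinds to essential surjectivity of $\phi_0$ in the sense defined above. Hence $\phi_\bu$ is a Morita morphism.

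For the second assertion, let $\bphi\colon\B X_\bu\to\B Y_\bu$ be arbitrary. Precomposing with the atlas $X_0\to\B X_\bu$ gives a map $X_0\to\B Y_\bu$, i.e. a principal $Y_\bu$-bundle $\ell\colon P\to X_0$ with anchor $r\colon P\to Y_0$. Pushing the canonical $2$-isomorphism of the square $X_1\rightrightarrows X_0\to\B X_\bu$ through $\bphi$ produces an isomorphism of principal $Y_\bu$-bundles $s^*P\iso t^*P$ over $X_1$; this is exactly a left $X_\bu$-action on $P$ which commutes with the right $Y_\bu$-action (in particular it preserves $r$), so $P$ is an $X_\bu$--$Y_\bu$-bibundle, principal on the $Y$-side, with $\ell$ a surjective submersion. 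Since $\ell$ is then essentially surjective, set $X_\bu':=\ell^*X_\bu$, so $\psi_\bu\colon X_\bu'\to X_\bu$ is a Morita morphism. Define $\psi'_\bu\colon X_\bu'\to Y_\bu$ by $r$ on objects and, on an arrow $(p,h,p')$ of $\ell^*X_\bu$, by the unique arrow $k\colon r(p)\to r(p')$ in $Y_1$ determined by $p'\cdot k=h\cdot p$ (this is well defined and smooth because $h\cdot p$ and $p'$ lie in the same $Y_\bu$-torsor fibre of $\ell$ over $t(h)=\ell(p')$, using that the left action preserves $r$); functoriality is immediate from the left- and right-action axioms. One then checks that $\B\psi'_\bu\circ(\B\psi_\bu)\n$, tested on the atlas $X_0\to\B X_\bu$, again classifies the bibundle $P$ together with its left-action $2$-isomorphism, so $\bphi\simeq\B\psi'_\bu\circ(\B\psi_\bu)\n$.

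The third assertion is now formal. If $X_\bu$ and $Y_\bu$ are Morita equivalent, connect them by a zigzag of Morita morphisms; the first assertion turns each leg into an equivalence of stacks, whence $\B X_\bu\simeq\B Y_\bu$. Conversely, given an equivalence $\bphi\colon\B X_\bu\simeq\B Y_\bu$, factor it by the second assertion as $\B\psi'_\bu\circ(\B\psi_\bu)\n$ with $\psi_\bu\colon X_\bu'\to X_\bu$ Morita; then $\B\psi_\bu$ is an equivalence by the first assertion, hence so is $\B\psi'_\bu$, hence $\psi'_\bu$ is a Morita morphism, again by the first assertion. The zigzag $X_\bu\xleftarrow{\psi_\bu}X_\bu'\xrightarrow{\psi'_\bu}Y_\bu$ then exhibits the Morita equivalence. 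The genuinely new work is concentrated in the second assertion — extracting the bibundle $P$ from the $2$-isomorphism data of $\bphi$ and verifying that $\psi'_\bu$ is a smooth functor with $\B\psi'_\bu\circ(\B\psi_\bu)\n\simeq\bphi$ — where one must track $2$-isomorphisms carefully and use smoothness of the division map of the principal $Y_\bu$-bundle; since all of this is the content of the discussion following \cite[Proposition~60]{metzler;topological-smooth-stack} (see also \cite[Theorem~2.2]{behrend-xu;stacks-gerbes}), in the present paper it may simply be cited.
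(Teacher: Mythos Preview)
The paper does not actually prove this theorem: it is stated as background material and attributed to the discussion following \cite[Proposition~60]{metzler;topological-smooth-stack} and to \cite[Theorem~2.2]{behrend-xu;stacks-gerbes}, with no argument given. Your proposal therefore goes well beyond what the paper does, supplying a correct sketch of the standard bibundle argument, and you rightly anticipate at the end that in this paper the result may simply be cited.
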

 
\begin{remark} Theorem~\ref{presentationtheorem} can be strengthened using the language of bicategories, following~\cite{pronkEtenduesStacksBicategories1996}. Let $\mathbf{LieGpd}[M\n]$ denote the bicategory formed by formally inverting the class $M$ of Morita morphisms in $\mathbf{LieGpd}$. Then $\B\colon \mathbf{LieGpd} \to \mathbf{DiffStack}$ extends to a map of bicategories
 $\B^{loc}\colon \mathbf{LieGpd}[M\n] \to \mathbf{DiffSt}$, which is an equivalence of bicategories. \end{remark}
 
An \emph{atlas} of a stack $\X$ is a representable surjective submersion from a manifold $X\to \X$. Then $\X$ is presented by the Lie groupoid $X\times_\X X \rightrightarrows X$. Conversely, applying $\B$ to the natural inclusion of Lie groupoids $X_0\to X_\bu$ gives an atlas $X_0\to \B X_\bu$ of the stack $\B X_\bu$.

To a differentiable stack $\X\simeq \B X_\bu$ we associate a topological space $\X^{coarse}$, the \emph{coarse quotient}. The coarse quotient is homeomorphic to $X_0/X_1$, which is the topological space formed by identifying points of $X_0$ which lie in the same $X_1$-orbit. This space typically fails to be Hausdorff. Similarly, to a morphism of differentiable stacks $\stackmorphism{\phi}\colon \X\to \Y$ we associate the morphism $\stackmorphism{\phi}^{coarse} \colon \X^{coarse} \to \Y^{coarse}$ of coarse quotient spaces. The passage from differentiable stacks to their coarse quotients is functorial~\cite[Proposition~4.15]{noohi;foundations-topological-stacks}.

Let $f\colon \X \to \R^n$ be a function on a differentiable stack $\X$. A point $p \in \R^n$ is a \emph{regular value of $f$} if, for every atlas $X\to \X$, the point $p$ is a regular value of the smooth map $X\to \X \xrightarrow{f} \R^n$. A point $x \in \X^{coarse}$ is a \emph{regular point of $f^{coarse}\colon \X^{coarse} \to \R^n$} if for every groupoid presentation $X_1\rightrightarrows X_0\to \X$, every point $y\in X_0$   which is sent to $x$ under $X_0 \to X_0/X_1\cong \X^{coarse}$ is a regular point of $X_0\to \X \xrightarrow{f} \R^n$.

It is straightforward to show the following conditions are equivalent to the definitions in the previous paragraph: A point $p\in \R^n$ is a regular value of $f$ if there exists an atlas $X\to \X$ such that $p$ is a regular value of $X\to \X \to \R^n$. And,
a point $x\in \X^{coarse}$ is a regular point of $f^{coarse}$ if there exists a groupoid presentation $X_1\rightrightarrows X_0 \to \X$ and a point $y\in X_0$, such that $y$ is sent to $x$ under $X_0\to X_0/X_1\to \X^{coarse}$ and such that $y$ is a regular point of $X_0\to \X\xrightarrow{f}\R^n$.

\subsection*{Foliation groupoids and \'etale stacks}

A Lie groupoid $X_\bu$ is \emph{\'etale} if the source map $s$ is a local diffeomorphism. More generally, $X_\bu$ is called a \emph{foliation groupoid} if it is Morita equivalent to an \'etale Lie groupoid, or equivalently if it has discrete isotropy groups (see \cite[Theorem~1]{crainic-moerdijk;foliation-cyclic}). The name is due to the fact that the connected components of $X_1$ orbits form a foliation of $X_0$. If $\F$ is the foliation of $X_0$ by $X_1$ orbits, then $T\F$ is a constant rank subbundle of $TX_0$. 

A differentiable stack $\X$ is \emph{\'etale} if it is equivalent to $\B X_\bu$, where $X_\bu$ is an \'etale groupoid. Then any groupoid presentation of $\X$ is a foliation groupoid. We do not require that $\X^{coarse}$ is Hausdorff.

A Lie groupoid $X_\bu$ is \emph{proper} if the map $(s,t)\colon X_1\to X_0\times X_0$ is a proper map. A differentiable stack $\X$ is proper if it admits a presentation by a proper Lie groupoid. 

If $X_\bu$ is an \'etale groupoid, and $g\in X_1$, then $g$ induces a germ $\gamma(g)$ of a diffeomorphism of $X_0$ which takes $s(g)$ to $t(g)$. We say $X_\bu$ is \emph{effective} if $\gamma(g) = \gamma(h)$ implies $g=h$ for all $g,h\in X_1$. A differentiable stack $\X$ is \emph{effective} if it admits a presentation by an effective Lie groupoid $X_\bu$.

A stack $\X$ is an \emph{orbifold} if it is both proper and \'etale. If $\X$ is an effective stack and an orbifold, then $\X$ is an \emph{effective orbifold}. See~\cite{lerman;orbifolds-as-stacks, moerdijkOrbifoldsSheavesGroupoids1997} for justification of this terminology.

\subsection*{Basic vector fields and forms}

Let $X_\bu$ be a foliation groupoid. For $k\ge 0$, consider the vector spaces of basic $k$-forms
\begin{align*}
\Omega^k_0(X_\bu) & =\{\omega_{X_\bu}:=(\omega_{X_0},\omega_{X_1}) \in \Omega^k(X_0)\times \Omega^k(X_1) \mid s^*\omega_{X_0}= \omega_{X_1}=t^*\omega_{X_0} \} \\
& \cong\{\omega \in \Omega^k(X_0) \mid s^* \omega = t^*\omega\}
\end{align*}
The exterior derivative restricts to a map $d  \colon  \Omega^k_0(X_\bu)  \to \Omega^{k+1}_0(X_\bu)$ sending $(\omega_{X_0},\omega_{X_1})  \mapsto (d\omega_{X_0},d\omega_{X_1})$. 

Let $\F$ be the foliation of $X_0$ by $X_1$ orbits, and let $N_0=TX_0/T\F$ be the normal bundle to $T\F$. Let $N_1 = TX_1/(\ker(ds)+\ker(dt))$. As a consequence of~\cite[Lemma~5.3.2]{hoffman-sjamaar;hamiltonian-stack}, $N_1$ is a constant rank vector bundle over $X_1$, and the source and target maps induce isomorphisms $s^*N_0\cong N_1 \cong t^* N_0$ of vector bundles over $X_1$.

 The space of \emph{basic $k$-vector fields} on $X_\bu$ is
 \begin{align*}
\Vect_0^k(X_\bu) & = \{v_{X_\bu}:=(v_{X_0},v_{X_1})\in \Gamma(\wedge^k N_0)\times \Gamma(\wedge^k N_1)\mid s^*v_{X_0} = v_{X_1} = t^*v_{X_0}\} \\
& \cong \{v \in \Gamma(\wedge^k N_0) \mid s^*v =t^* v\}.
\end{align*}
The Lie bracket on $\Vect(X_0)\times \Vect(X_1)$ descends to a Lie bracket on 
\[
\Vect_0(X_\bu):=\Vect_0^1(X_\bu).
\]

\begin{remark}
It is typical~\cite{berwick-evans-lerman;lie-2-algebras-vector, hepworth;vector-flow-stack} to take use the Lie 2-algebra \emph{multiplicative vector fields} on $X_\bu$ as a model for vector fields on the stack $\B X_\bu$; recall that a multiplicative vector field is a Lie groupoid morphism $v_\bu \colon X_\bu \to T X_\bu$ satisfying $p_\bu \circ v_\bu = \id_{X_\bu}$, where $p_\bu \colon TX_\bu \to X_\bu$ is the vector bundle projection. It is shown in~\cite[Section~5.4]{hoffman-sjamaar;hamiltonian-stack} that, when $X_\bu$ is a foliation groupoid, the Lie 2-algebra of multiplicative vector fields on $X_\bu$ is equivalent to the Lie algebra $\Vect_0(X_\bu)$ (as a Lie 2-algebra). In particular, we have the following invariance of basic vector fields and forms under Morita equivalence.
\end{remark}

\begin{proposition}\cite[Proposition~5.3.12]{hoffman-sjamaar;hamiltonian-stack} \label{proposition;basicvfandform}
Let $X_\bu$ and $Y_\bu$ be foliation groupoids. A Morita morphism $\phi_\bu\colon X_\bu \to Y_\bu$ induces
\begin{enumerate}
\item an isomorphism $\phi_\bu^* \colon \Vect_0(Y_\bu) \cong \Vect_0(X_\bu)$ of Lie algebras, and
\item an isomorphism $\phi_\bu^* \colon \Omega_0^\bu(Y_\bu) \cong \Omega_0^\bu(X_\bu)$ of complexes.
\end{enumerate}
If $\phi_\bu$ and $\chi_\bu$ are naturally isomorphic Morita morphisms, then $\phi_\bu^*=\chi_\bu^*$.
\end{proposition}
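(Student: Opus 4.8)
The plan is to reduce to the case in which $\phi_\bu$ is the canonical projection of a pullback groupoid, to prove the statement there by a descent argument along the surjective submersion supplied by essential surjectivity, and to handle the Lie bracket by a second reduction to Morita morphisms whose object map is a surjective submersion. First observe that $\phi_\bu\mapsto\phi_\bu^*$ is functorial, $(\psi_\bu\circ\phi_\bu)^*=\phi_\bu^*\circ\psi_\bu^*$, being assembled out of pullbacks of forms and of sections of vector bundles; this is used in the last step. The final assertion is immediate: a natural isomorphism $\theta\colon\phi_\bu\To\chi_\bu$ is a smooth map $\theta\colon X_0\to Y_1$ with $s\circ\theta=\phi_0$ and $t\circ\theta=\chi_0$, so for a basic form $\omega$ one gets $\phi_0^*\omega=\theta^*s^*\omega=\theta^*t^*\omega=\chi_0^*\omega$, and the same computation for sections of $\wedge^kN_0$ (using $s^*N_0\cong N_1\cong t^*N_0$ from \cite[Lemma~5.3.2]{hoffman-sjamaar;hamiltonian-stack}) gives $\phi_\bu^*=\chi_\bu^*$ on $\Vect_0$. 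Finally, fully faithfulness of $\phi_\bu$ turns the defining fibered square into an identification $X_1\iso\phi_0^*Y_1$, $h\mapsto(s(h),\phi_1(h),t(h))$, hence an isomorphism $X_\bu\iso\phi_0^*Y_\bu$ of Lie groupoids over $\id_{X_0}$ carrying $\phi_\bu$ to the canonical projection. So I may assume $X_\bu=\phi^*Y_\bu$ for an essentially surjective $\phi\colon X_0\to Y_0$, with object map $\phi$ and arrow map $\pi_1(x,g,y)=g$; in particular $s\circ\pi_1=\phi\circ\sigma$ and $t\circ\pi_1=\phi\circ\tau$, writing $\sigma,\tau$ for the source and target of $\phi^*Y_\bu$.

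\emph{Well-definedness and bijectivity.} On forms $\pi_\bu^*\omega=\phi^*\omega$, which is basic on $\phi^*Y_\bu$ since $\sigma^*\phi^*\omega=\pi_1^*s^*\omega=\pi_1^*t^*\omega=\tau^*\phi^*\omega$, and it commutes with $d$. For vector fields, essential surjectivity (applied through the unit section $u$) shows that $\phi$ is transverse to the orbits of $Y_\bu$ and $T\F_X=(d\phi)\n(T\F_Y)$, so $d\phi$ induces an isomorphism $N_0(\phi^*Y_\bu)\cong\phi^*N_0(Y_\bu)$; this defines $\pi_\bu^*$ on $\Vect_0$, and the computation above again shows it carries basic vector fields to basic vector fields. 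Set $P:=Y_1\times_{s,Y_0,\phi}X_0$ and $q:=t\circ\pr_1\colon P\to Y_0$, a surjective submersion by essential surjectivity, and note $s\circ\pr_1=\phi\circ\pr_2$. Injectivity: if $\alpha$ is basic and $\phi^*\alpha=0$, then $q^*\alpha=\pr_1^*t^*\alpha=\pr_1^*s^*\alpha=\pr_2^*\phi^*\alpha=0$, so $\alpha=0$. Surjectivity: given a basic $\beta$ on $X_0$, the smooth map $P\times_{q,Y_0,q}P\to\phi^*Y_1$, $\bigl((g,x),(g',x')\bigr)\mapsto\bigl(x,(g')\n\circ g,x'\bigr)$, together with basicness of $\beta$, shows that the two pullbacks of $\pr_2^*\beta$ to $P\times_{q,Y_0,q}P$ agree, so $\pr_2^*\beta$ descends along $q$ to some $\alpha$ on $Y_0$; composing with $\iota:=(u\circ\phi,\id_{X_0})\colon X_0\to P$ (for which $q\circ\iota=\phi$, $\pr_2\circ\iota=\id$) gives $\phi^*\alpha=\beta$; and $\alpha$ is basic because the two maps $W:=P\times_{q,Y_0,s}Y_1\to P$ sending $\bigl((g,x),a\bigr)$ to $(g,x)$ and to $(a\circ g,x)$ have equal composite to $X_0$ but compose with $q$ to give $s$ and $t$ of the surjective submersion $W\to Y_1$. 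Every step applies verbatim to sections of $\wedge^kN_0$, with \cite[Lemma~5.3.2]{hoffman-sjamaar;hamiltonian-stack} used to interpret the relevant pullbacks and the descent. This proves (2) and shows $\phi_\bu^*$ is bijective in (1).

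\emph{The Lie bracket: the main obstacle.} Represent a basic vector field $v$ on $Y_\bu$ by a pair $(\hat v_{Y_0},\hat v_{Y_1})\in\Vect(Y_0)\times\Vect(Y_1)$ with $\hat v_{Y_1}$ both $s$- and $t$-related to $\hat v_{Y_0}$, so that the bracket on $\Vect_0$ is induced by the bracket of such pairs. The obstacle is that for a general Morita morphism the object map $\phi$ need not be a submersion, so such a pair cannot be lifted directly; I therefore first treat the case $\phi$ a surjective submersion (the argument applies to any pullback-groupoid projection along a surjective submersion). Then $\hat v_{Y_0}$ lifts to $\hat v_{X_0}\in\Vect(X_0)$ with $d\phi\circ\hat v_{X_0}=\hat v_{Y_0}\circ\phi$ (such lifts form a nonempty affine subbundle of $TX_0$, which has a global section), and $\hat v_{X_1}(x,g,y):=\bigl(\hat v_{X_0}(x),\hat v_{Y_1}(g),\hat v_{X_0}(y)\bigr)$ is tangent to $\phi^*Y_1\subseteq X_0\times Y_1\times X_0$ precisely because of the lift identity together with the $s$-, $t$-relatedness. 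The pair $(\hat v_{X_0},\hat v_{X_1})$ is then $\sigma$- and $\tau$-related, hence represents a basic vector field, which is $\pi_\bu^*v$ since $d\phi$ induces the normal bundle isomorphism. Because the bracket of $f$-related vector fields is again $f$-related, $\bigl([\hat v_{X_0},\hat w_{X_0}],[\hat v_{X_1},\hat w_{X_1}]\bigr)$ is a $\sigma,\tau$-related pair whose $X_0$-component is $\phi$-related to the $Y_0$-component of the pair $\bigl([\hat v_{Y_0},\hat w_{Y_0}],[\hat v_{Y_1},\hat w_{Y_1}]\bigr)$ representing $[v,w]$; hence it represents $\pi_\bu^*[v,w]$, so $[\pi_\bu^*v,\pi_\bu^*w]=\pi_\bu^*[v,w]$.

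\emph{Bootstrapping.} For a general essentially surjective $\phi$, the canonical projection $\rho_\bu\colon q^*Y_\bu\to Y_\bu$ has object map the surjective submersion $q\colon P\to Y_0$, and it factors, up to the natural isomorphism $\theta_{(g,x)}:=g$, as $q^*Y_\bu\xrightarrow{b_\bu}\phi^*Y_\bu\xrightarrow{\pi_\bu}Y_\bu$, where $b_\bu$ is the Morita morphism with object map $\pr_2\colon P\to X_0$ and arrow map $\bigl((g,x),h,(g',x')\bigr)\mapsto\bigl(x,(g')\n\circ h\circ g,x'\bigr)$; its object map is again a surjective submersion. By the preceding step applied to $b_\bu$ and to $\rho_\bu$, the maps $b_\bu^*$ and $\rho_\bu^*$ are bracket-preserving (and bijective, by the second paragraph); by functoriality and the natural-isomorphism case, $\rho_\bu^*=(\pi_\bu\circ b_\bu)^*=b_\bu^*\circ\pi_\bu^*$, so $\pi_\bu^*=(b_\bu^*)\n\circ\rho_\bu^*$ is an isomorphism of Lie algebras. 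This proves (1) and completes the proof.
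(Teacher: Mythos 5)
The paper itself offers no proof of this proposition: it is imported verbatim, with a citation, from Proposition~5.3.12 of \cite{hoffman-sjamaar;hamiltonian-stack}, where the statement is established as part of the general theory of basic vector fields and forms on foliation groupoids (there embedded in the comparison with multiplicative vector fields sketched in the remark following the proposition). So there is nothing in this paper to compare your argument against line by line; what you have written is a self-contained proof, and as far as I can check it is correct. Your route --- use full faithfulness to identify $X_\bu$ with the pullback groupoid $\phi_0^*Y_\bu$, prove bijectivity of $\phi_\bu^*$ on $\Omega_0^\bu$ and $\Vect_0$ by descent along the surjective submersion $q=t\circ\pr_1\colon Y_1\times_{s,Y_0,\phi}X_0\to Y_0$ furnished by essential surjectivity, and then obtain bracket-compatibility by factoring $q^*Y_\bu\to Y_\bu$ through $\phi^*Y_\bu$ up to the natural isomorphism $\theta_{(g,x)}=g$ and reducing to projections whose object map is a surjective submersion --- is a sensible and economical way to prove the Morita invariance directly, and your treatment of the natural-isomorphism clause (pulling the basic identity $s^*\omega=t^*\omega$ back along $\theta$) is exactly the right one-line argument.

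A few points are asserted rather than argued and deserve a sentence each if this were to be written out in full: (i) that essential surjectivity forces $\phi_0$ to be transverse to the $Y_\bu$-orbit foliation, so that $d\phi_0$ really does induce $N_0(\phi^*Y_\bu)\cong\phi_0^*N_0(Y_\bu)$ (your parenthetical ``applied through the unit section'' is the right computation, but it is the crux of why $\Vect_0$ pulls back at all for non-submersive object maps); (ii) that every basic vector field is represented by a pair $(\hat v_{Y_0},\hat v_{Y_1})$ with $\hat v_{Y_1}$ simultaneously $s$- and $t$-related to $\hat v_{Y_0}$ --- this needs the pointwise solvability of $(ds,dt)\xi=(\hat v(s(g)),\hat v(t(g)))$, which is exactly where basicness enters, plus a partition-of-unity argument on the resulting affine subbundle; and (iii) that the descent step for sections of $\wedge^k N_0$ uses not just the isomorphisms $s^*N_0\cong N_1\cong t^*N_0$ but their compatibility with composition of arrows, so that the identifications over $P\times_{q,Y_0,q}P$ are coherent. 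None of these is a gap in substance --- they are the standard facts the cited Lemma~5.3.2 and its surroundings supply --- but they are the places where the companion paper's machinery is doing work that your sketch currently compresses into a phrase.
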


If $v_{X_\bu}\in \Vect_0(X_\bu)$ is a basic vector field on $X_\bu$ and $\omega_{X_\bu}\in \Omega^k_0(X_\bu)$ is a basic $k$-form on $X_\bu$, then the contraction of $\omega_{X_\bu}$ by $v_{X_\bu}$ is defined as
 \[
 \iota_{v_{X_\bu}} \omega_{X_\bu}= (\iota_{v_{X_0}} \omega_{X_0},\iota_{v_{X_1}} \omega_{X_1})\in \Omega_0^{k-1} (X_\bu),
 \]
 where $\iota_{v_{X_i}}\omega_{X_i}:=\iota_{\tilde{v}_{X_i}}\omega_{X_i}$ for any vector field $\tilde{v}_{X_i} \in \Gamma(TX_i)$ lifting $v_{X_i}\in \Gamma (N_i)$. Contraction of basic multivector fields with basic $1$-forms is defined similarly. It is easy to check these definitions do not depend on the choice of lift $\tilde{v}_{X_i}$.

\subsection*{0-Symplectic groupoids}

A 2-form $\omega$ on a smooth manifold $X$ is \emph{presymplectic} if $d\omega=0$ and if the subbundle $\ker\omega\subset TX$ is of constant rank. In this case, $\ker\omega$ can be integrated to a foliation of $X$, called the \emph{null foliation}. We will also denote it by $\ker \omega$.

A 2-form $\omega_{X_\bu} \in \Omega^2_0(X_\bu)$ on a foliation groupoid $X_\bu$ is \emph{0-symplectic} if $\omega_{X_0}$ is presymplectic, and if the leaves of the null foliation $\ker \omega_{X_0}$ are the connected components of the orbits of $X_1$ in $X_0$. In particular, the forms $\omega_0$ and $\omega_1$ are nondegenerate if and only if $X_\bu$ is \'etale. 
Being $0$-symplectic is a Morita invariant condition. As a consequence of the definition, contraction with a 0-symplectic form induces an isomorphism $\Vect_0(X_\bu)\cong \Omega^1_0(X_\bu)$.

\begin{example} \label{example;SymLieGpd}
Let $(X_0,\omega_{X_0})$ be a presymplectic manifold, and let $X_1\rightrightarrows X_0$ be any foliation groupoid with the property that the fibers of the source map $s\colon X_1\to X_0$ are connected, and the property that the $X_1$ orbits in $X_0$ are the leaves of $\ker \omega_0$. Let $\omega_{X_1} = s^*\omega_{X_0}$. Then $(X_\bu,\omega_{X_\bu})$ is $0$-symplectic.

In particular, one may choose $X_\bu$ to be the monodromy groupoid or holonomy groupoid of the foliation $\ker\omega_0$; for a description of these groupoids see~\cite[Chapter~5.2]{moerdijk-mrcun;foliations-groupoids}.
\end{example}

\subsection*{Forms and vector fields on \'etale stacks}

Let $\X$ be an \'etale stack, and let $\B X_\bu \simeq \X$ be a fixed presentation of $\X$ by a foliation groupoid. Then by definition, the complex of differential forms on $\X$ is
\[
\Omega^\bu(\X) =\Omega^\bu_0(X_\bu).
\]
If $\omega_{X_\bu}\in \Omega^\bu_0(X_\bu)$, then we write $\B \omega_{X_\bu}\in \Omega^\bu(\X)$ to denote the corresponding form on $\X$. Similarly, the space of $k$-multivector fields of $\X$ is 
\[
\Vect^k(\X) = \Vect_0^k(X_\bu).
\]
Contraction of vector fields and forms on an \'etale stack $\X$ is defined as contraction of basic vector fields and forms, on the level of presenting groupoids.

Due to Proposition~\ref{proposition;basicvfandform}, these definitions depend on the choice of presentation $\B X_\bu \simeq \X$ only up to isomorphism. 

\begin{remark}
There are equivalent definitions of $\Vect(\X)$ and $\Omega^\bu(\X)$ given in Sections~5.7 and~5.8 of~\cite{hoffman-sjamaar;hamiltonian-stack}, which do not require a choice of fixed presentation $\B X_\bu \simeq \X$. For our purposes it will be enough to work with the description above.
\end{remark}

\subsection*{Symplectic stacks} Let $\X$ be an \'etale stack, and let $\bomega\in \Omega^2(\X)$. Then $(\X,\bomega)$ is \emph{symplectic} if it satisfies the following property: Given a presentation $\B X_\bu\simeq \X$ of $\X$ and a basic form $\omega_{X_\bu} \in \Omega^2_0(X_\bu)$ with $\B \omega_{X_\bu} = \bomega$, then the form $\omega_{X_\bu}$ is 0-symplectic.

\subsection*{Lie 2-groups and their Lie algebras}

A Lie $2$-group $G_\bu$ is a strict group object in the 2-category of Lie groupoids. That is, it is a group object in the 1-category of Lie groupoids and their morphisms. The groupoid structure maps of $G_\bu$ are then Lie group homomorphisms. In this article we consider only Lie $2$-groups with $G_1$ and $G_0$ both abelian.

The Lie algebra of a foliation Lie 2-group $G_\bu$ is defined as the quotient 
\[
\Lie(G_\bu) = \Lie(G_0)/ \Lie(t)( \ker \Lie(s)).
\]
If $\Phi_\bu \colon G_\bu \to H_\bu$ is a morphism of Lie 2-groups which is also a Morita morphism, then it $\Phi_\bu$ is a \emph{Morita morphism of Lie 2-groups}. It induces an isomorphism $\Lie(\Phi_\bu)\colon \Lie(G_\bu) \cong \Lie(H_\bu)$.

A \emph{(strict) action of a Lie 2-group $G_\bu$ on a Lie groupoid $X_\bu$} is a pair of actions $G_0\times X_0 \to X_0$ and $G_1\times X_1\to X_1$ which assemble to a map of Lie groupoids $G_\bu \times X_\bu \to X_\bu$.

\subsection*{Crossed modules}

A \emph{crossed (Lie) module} $\partial\colon H\to G$ is a Lie group homomorphism together with an action of $G$ on $H$, subject to compatibility conditions which are described in Section~6.3 of \cite{hoffman-sjamaar;hamiltonian-stack}. In the context of this article, both $G$ and $H$ will always be abelian, the action of $G$ on $H$ will always be trivial, and these compatibility conditions will be automatically satisfied. A homomorphism of crossed modules $(\partial\colon H\to G) \to (\partial'\colon H'\to G')$ is a pair of Lie group homomorphisms $H\to H'$ and $G\to G'$ which preserve the crossed module structure.

There is an equivalence of 1-categories between the category of Lie 2-groups and the category of crossed modules. In particular, to an abelian crossed Lie module $\partial\colon H\to G$ one associates the Lie $2$-group $H\ltimes G\rightrightarrows G$. The groupoid structure of $H\ltimes G\rightrightarrows G$ is that of an action groupoid, where the action of $H$ on $G$ is given as $h\cdot g = \partial(h)g$, for $h\in H$ and $g\in G$. The group structure on $H\ltimes G$ is the direct product group structure. Conversely, to the abelian Lie $2$-group $G_\bu$ one associates the crossed module $\partial \colon s\n(e) \to G_0$, where $\partial:=t|_{s\n(e)}$ is the target map restricted to arrows whose source is the group unit of $G_0$.

The axioms for an action of a Lie 2-group on a Lie groupoid can be formulated in terms of crossed modules. An action of a crossed module $\partial\colon H\to G$ on a Lie groupoid $X_\bu$ consists of three actions: actions of $G$ on $X_0$ and $X_1$, and an action of $H$ on $X_1$. We will write these actions as
\begin{align}
G\times X_0 & \to X_0 &  (g,x)  \mapsto g\cdot x,  \nonumber \\
G\times X_1 & \to X_1 &  (g,\alpha) \mapsto g\cdot \alpha, \label{xmodact} \\
H\times X_1 & \to X_1 &   (h,\alpha) \mapsto h*\alpha. \nonumber
\end{align}
These actions are subject to compatibility conditions. When $G$ and $H$ are abelian, they are as follows. If $g\in G$, $h\in H$, $x\in X_0$, and $\alpha,\alpha'\in X_1$, then
\begin{equation} \label{equation;compatcross}
\begin{split}
&g\cdot u(x) = u(g\cdot x);\quad g\cdot s(\alpha) = s(g\cdot \alpha);\quad g\cdot t(\alpha) = t(g\cdot \alpha);\\
&g\cdot(\alpha\circ \alpha') = (g\cdot \alpha)\circ (g\cdot \alpha');\\
&s(h*\alpha) =s(\alpha);\quad t(h*\alpha) = \partial(h)\cdot t(\alpha);\\
&h*\alpha = (h* u(t(\alpha)))\circ \alpha = (\partial(h)\cdot \alpha) \circ (h *u(s(\alpha))).
\end{split}
\end{equation}
We will freely pass between the languages of Lie 2-groups and crossed modules in what follows.

\subsection*{Lie group stacks and their Lie algebras}

In the context of this article, a \emph{Lie group stack} $\G$ will denote a (strict) group object internal to the category of differentiable stacks. Since any $\G$ we consider will be \'etale and have a connected atlas, by the main theorem of \cite{trentinaglia-zhu;strictification} there is no loss in generality in assuming that $\G$ is strict. The 2-functor $\B$ then takes Lie 2-groups to Lie group stacks. 

Equivalence of Lie group stacks is defined in Section~6.5 of \cite{hoffman-sjamaar;hamiltonian-stack}. If $G_\bu$ and $G_\bu'$ are Lie 2-groups and $\B G_\bu$ is equivalent to $\B G_\bu'$ as a Lie group stack, then $G_\bu$ is Morita equivalent to $G_\bu'$ as a Lie 2-group~\cite[Proposition~6.6.2]{hoffman-sjamaar;hamiltonian-stack}. A \emph{presentation} of a Lie group stack $\G$ by a Lie 2-group $G_\bu$ is an equivalence $\B G_\bu \simeq \G$ of Lie group stacks. If $\G$ is presented by the Lie 2-group associated to the crossed module $\partial \colon H \to G$, then we also say $\G$ is presented by this crossed module.

Given an \'etale Lie group stack $\G$ with a fixed presentation by a Lie 2-group $G_\bu$, define $\Lie(\G)= \Lie(G_\bu)$. Since Morita equivalences of Lie 2-groups induce isomorphisms of the associated Lie algebras, the Lie algebra $\Lie(\G)$ depends on the choice of the presentation $G_\bu$ only up to isomorphism. We will typically assume a presentation of $\G$ has been fixed. An equivalence $\stackmorphism{\Phi}\colon \G \to \G'$ of stacky Lie groups induces an isomorphism of their Lie algebras $\Lie(\stackmorphism{\Phi})\colon \Lie(\G)\cong \Lie(\G')$.

Axioms for an action of a Lie group stack on a differentiable stack are as in Section 3.2 of \cite{bursztyn-noseda-zhu;principal-stacky-groupoids}. However, by Theorem \ref{appAthm} below, in the context of this article we can always assume an action of a Lie group stack on an \'etale stack comes from a strict action on the level of their presenting groupoids.

\subsection*{Hamiltonian actions on 0-symplectic groupoids}

A \emph{Hamiltonian Lie groupoid} is a tuple $(X_\bu,\omega_{X_\bu},G_\bu,\mu_{X_\bu})$, where $(X_\bu,\omega_{X_\bu})$ is a 0-symplectic groupoid, $G_\bu$ is a foliation Lie 2-group with $G_0$ connected, and $\mu_{X_\bu}\colon X_\bu \to \Lie(G_\bu)^*$ is a moment map, satisfying the following conditions:
\begin{enumerate}
\item $G_\bu$ acts on $X_\bu$
\item If $v \in \g_0=\Lie(G_0)$, then contracting the presymplectic form $\omega_{X_0}$ with the fundamental vector field $v_{X_0}$ gives the $v$-component of $d\mu_{X_0}$:
\[
\langle d\mu_{X_0}, v \rangle = \iota_{v_{X_0}} \omega_{X_0}.
\]
\item The moment map $\mu_{X_0}$ intertwines the $G_0$ action on $X_0$ and the coadjoint action of $G_0$ on $\Lie(G_\bu)^*\subseteq \Lie(G_0)^*$.
\end{enumerate}
A pair $(\Phi_{X_\bu} \colon X_\bu \to X_\bu', \Phi_{\G_\bu} \colon G_\bu \to G_\bu')$ is a Morita morphism of the Hamiltonian groupoids $(X_\bu,\omega_{X_\bu},G_\bu,\mu_{X_\bu})$ and $(X_\bu',\omega_{X'_\bu},G'_\bu,\mu_{X'_\bu})$ if  $\Phi_{G_\bu}$ is a Morita morphism of Lie 2-groups, and $\Phi_{X_\bu}$ is a Morita morphism of Lie groupoids which preserves the symplectic form and moment maps.

\begin{example} \label{example;HamLieGpd}
Let $(X_0,\omega_0)$ be a presymplectic manifold. Assume $G_0$ is a compact torus, and let 
\[
\mathfrak{n} = \{ v \in \Lie(G_0) \mid \iota_{v_{X_0}} \omega_0 =0 \}.
\]
Given a map $\mu_{X_0}\colon X_0 \to (\Lie(G_0) /\mathfrak{n})^*$, assume that $(X_0,\omega_{X_0},G_0,\mu_{X_0})$ satisfies the following:
\begin{enumerate}
\item $G_0$ acts on $X_0$;
\item $\langle d \mu_{X_0} v \rangle = \iota_{v_{X_0}} \omega_{X_0}$ for all $v\in \Lie(G_0)$; 
\item the map $\mu_{X_0}$ is invariant with respect to the $G_0$ action on $X_0$.
\end{enumerate}
The tuple $(X_0,\omega_{X_0},G_0,\mu_{X_0})$ is known as a \emph{presymplectic Hamiltonian $G_0$-manifold}.
In~\cite[Theorem~7.2.1]{hoffman-sjamaar;hamiltonian-stack}, it was shown how to build a Hamiltonian Lie groupoid from a presymplectic Hamiltonian manifold. Here we describe that construction, in a simple case.

Let $(X_\bu,\omega_{X_\bu})$ be a 0-symplectic groupoid with connected $s$-fibers integrating $(X_0,\omega_{X_0})$, as in Example~\ref{example;SymLieGpd}. Let $N$ be the simply connected Lie group with Lie algebra $\mathfrak{n}$. The inclusion $\mathfrak{n}\hookrightarrow \Lie(G_0)$ integrates to a map of Lie groups $\partial\colon N \to G_0$, which is a crossed module. Let $G_\bu$ be the associated Lie 2-group; then $\Lie(G_\bu) = \Lie(G_0)/\mathfrak{n}$. Then, there is a naturally defined action on $G_\bu$ on $X_\bu$. Putting $\mu_{X_1}=s^*\mu_{X_0}$, the tuple
\[
(X_\bu,\omega_{X_\bu}, G_\bu,\mu_{X_\bu})
\]
is a Hamiltonian Lie groupoid.
\end{example}

\subsection*{Hamiltonian actions on stacks} We now recall the intrinsic definition of a Hamiltonian stack. Since we will typically employ Theorem \ref{appAthm} below, we omit certain details.

A \emph{(connected) Hamiltonian stack} is a tuple $(\X,\bomega,\G,\bmu)$, where $(\X,\bomega)$ is a symplectic stack, $\G$ is an \'etale Lie group stack with $\G^{coarse}$ connected, and $\bmu\colon \X\to \Lie(\G)^*$ is a moment map, satisfying the following conditions:
\begin{enumerate}
\item $\G$ acts on $\X$
\item If $v \in \Lie(\G)$, then contracting the symplectic form $\bomega$ with the fundamental vector field $v_{\X}$ gives the $v$-component of $d\bmu$: 
\[
\langle d\bmu, v \rangle = \iota_{v_{\X}} \bomega.
\]
\item The moment map $\bmu$ intertwines the $\G$ action on $\X$ and the coadjoint action of $\G$ on $\Lie(\G)^*$.
\end{enumerate}
The fundamental vector field $v_\X\in \Vect(\X)$ and the coadjoint action of $\G$ on $\Lie(\G)^*$ are defined in general in Sections~6.9 and~6.10 of \cite{hoffman-sjamaar;hamiltonian-stack}. In the context of compact toric symplectic stacks the coadjoint action is always trivial. 

A pair $(\stackmorphism{\Phi}_\X\colon \X \to \X', \stackmorphism{\Phi}_\G \colon \G \to \G')$ is an equivalence of the Hamiltonian stacks $(\X,\bomega,\G,\bmu)$ and $(\X',\bomega',\G',\bmu')$ if $\stackmorphism{\Phi}_\G$ is an equivalence of Lie group stacks and $\stackmorphism{\Phi}_\X$ is an equivalence of stacks which preserves the symplectic form and moment map.

If $(X_\bu,\omega_{X_\bu},G_\bu,\mu_{X_\bu})$ is a Hamiltonian groupoid, then $(\B X_\bu, \B \omega_{X_\bu}, \B G_\bu, \B \mu_{X_\bu})$ is a Hamiltonian stack. The following gives a partial converse. It summarizes Theorem~6.8.1 and Theorem~7.3.2 of \cite{hoffman-sjamaar;hamiltonian-stack}.

\begin{theorem} \label{appAthm}
Let $\G$ be an \'etale Lie group stack which acts on an \'etale stack $\X$. Let $G_\bu$ be a Lie 2-group presentation of $\G$, and assume $G_0$ is connected. Then
\begin{enumerate}
\item \label{firstpresentactionitem} There exists a presentation $X_\bu$ of $\X$ and a free action of $G_0$ on $X_0$, so that the diagram 2-commutes:
\begin{equation}
\label{presactiondiag}
\begin{tikzcd}
G_0\times X_0 \arrow[r] \arrow[d] & X_0 \arrow[d] \\
\G \times \X \arrow[r] & \X.
\end{tikzcd}
\end{equation}
Here the horizontal arrows are the action maps and the vertical arrows are the atlases.
\item \label{presentactionitem} Assume $X_\bu$ is a presentation of $\X$ and there is a (not necessarily free) action of $G_0$ on $X_0$ so that the diagram \eqref{presactiondiag} 2-commutes. Then there is a (strict) action of $G_\bu$ on $X_\bu$ which, on the manifolds of objects, is the original action of $G_0$ on $X_0$. 
\item Assume $(\X,\bomega,\G,\bmu)$ is a Hamiltonian stack with $\B X_\bu\simeq \X$ and $ \B G_\bu\simeq \G$, and assume the action of $\G$ on $\X$ lifts to a strict action of $G_\bu$ on $X_\bu$, as in part \eqref{presentactionitem}. Then there is a unique 0-symplectic form $\omega_{X_\bu}\in \Omega^2_0(X_\bu)$ and moment map $\mu_{X_\bu}\colon X_\bu \to \Lie(G_\bu)^*$ so that
\[
(\X,\bomega,\G,\bmu)\simeq (\B X_\bu, \B \omega_{X_\bu}, \B G_\bu, \B \mu_{X_\bu})
\]
as Hamiltonian stacks.
\end{enumerate}
\end{theorem}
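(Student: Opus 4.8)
The plan is to reduce all three statements to bookkeeping with presenting groupoids, following the arguments behind Theorems~6.8.1 and~7.3.2 of \cite{hoffman-sjamaar;hamiltonian-stack}.

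For part \eqref{firstpresentactionitem} I would enlarge an arbitrary atlas so as to build in freeness. Fix a presentation with atlas $p\colon P_0\to\X$, and let $q\colon G_0\to\G$ be the atlas coming from $G_\bu$. Put $X_0:=G_0\times P_0$ and define $\pi\colon X_0\to\X$ by $\pi(g,x)=\balpha(q(g),p(x))$, where $\balpha$ denotes the action. The first point is that $\pi$ is again an atlas: it factors as $G_0\times P_0\xrightarrow{\id\times p}G_0\times\X\xrightarrow{\balpha\circ(q\times\id)}\X$, the first arrow is a base change of the atlas $p$ and hence a representable surjective submersion, while for the second one computes, using the $\G$-action axioms, that its pullback along any test map $M\to\X$ is the manifold $M\times G_0$, so that this arrow too is a representable surjective submersion. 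Take $X_\bu:=(X_0\times_\X X_0\rightrightarrows X_0)$. The free $G_0$-action $g'\cdot(g,x):=(g'g,x)$ on the first factor of $X_0$ then intertwines $\pi$ with $\balpha$ up to the coherence $2$-isomorphism of the action (together with the fact that $q$ is a homomorphism of group stacks), and this $2$-isomorphism is exactly what makes \eqref{presactiondiag} $2$-commute.

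For part \eqref{presentactionitem} I would present $G_\bu$ by its crossed module $\partial\colon H\to G_0$ (abelian, trivial action), so that a strict $G_\bu$-action on $X_\bu$ is precisely the triple of actions \eqref{xmodact} subject to \eqref{equation;compatcross}. The action of $G_0$ on $X_0$ is given, and the $2$-commutativity of \eqref{presactiondiag} furnishes a $2$-isomorphism $\sigma$ comparing $\pi(g\cdot x)$ with $q(g)\cdot\pi(x)$. Since any presentation identifies $X_1$ with $X_0\times_\X X_0$, an arrow $\alpha$ of $X_\bu$ is the same datum as a $2$-isomorphism $\pi(s\alpha)\Rightarrow\pi(t\alpha)$ in $\X$; I would then define $g\cdot\alpha$ by conjugating $q(g)\cdot\alpha$ by $\sigma$, and $h*\alpha$ by additionally inserting the $2$-isomorphism of $\G$ carried by $h\in H=s\n(e)\subset G_1$ (viewed as an arrow $e\to\partial(h)$ of $G_\bu$). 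Smoothness of these assignments I would check after pulling back along explicit atlases of $\X$ and $\G$. What remains — and what I expect to be the main obstacle — is the verification of \eqref{equation;compatcross}: the source, target and composition clauses follow from the coherence axioms of the $\G$-action and the compatibility of $\sigma$ with them, while the two crossed-module relations on the last line of \eqref{equation;compatcross} are the statement that $\alpha\mapsto g\cdot\alpha$ and $\alpha\mapsto h*\alpha$ are functorial in $\alpha$.

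For the final assertion I would argue almost formally from the definitions recalled above. Since $\X$ is \'etale and $\B X_\bu\simeq\X$, by definition $\Omega^\bu(\X)=\Omega^\bu_0(X_\bu)$, so $\bomega$ is $\B\omega_{X_\bu}$ for a unique $\omega_{X_\bu}\in\Omega^2_0(X_\bu)$, and the hypothesis that $(\X,\bomega)$ is symplectic says precisely that this $\omega_{X_\bu}$ is $0$-symplectic. Since the target $\Lie(\G)^*=\Lie(G_\bu)^*$ is a manifold, a morphism $\bmu\colon\X\to\Lie(\G)^*$ is the same datum as a smooth map $\mu_{X_0}\colon X_0\to\Lie(G_\bu)^*$ with $\mu_{X_0}\circ s=\mu_{X_0}\circ t$, and putting $\mu_{X_1}:=s^*\mu_{X_0}$ yields the unique groupoid moment map $\mu_{X_\bu}$ over $\bmu$; this pins down both $\omega_{X_\bu}$ and $\mu_{X_\bu}$. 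Finally, because fundamental vector fields, contraction, the de Rham differential and the coadjoint action on $\X$ are all defined through the chosen presentation $\B X_\bu\simeq\X$, the three conditions defining a Hamiltonian stack for $(\X,\bomega,\G,\bmu)$ translate verbatim into the three conditions defining a Hamiltonian groupoid for $(X_\bu,\omega_{X_\bu},G_\bu,\mu_{X_\bu})$ (condition~(1) being the assumed strict action), and the equivalence $(\X,\bomega,\G,\bmu)\simeq(\B X_\bu,\B\omega_{X_\bu},\B G_\bu,\B\mu_{X_\bu})$ of Hamiltonian stacks is then tautological.
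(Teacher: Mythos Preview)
The paper does not give its own proof of this theorem: it is stated as a summary of Theorems~6.8.1 and~7.3.2 of \cite{hoffman-sjamaar;hamiltonian-stack}, with no argument supplied. Your sketch explicitly follows those same results and outlines the bookkeeping they carry out (enlarging the atlas to $G_0\times P_0$ for freeness, building the $G_\bu$-action on arrows from the $2$-isomorphism witnessing \eqref{presactiondiag}, and reading off $\omega_{X_\bu}$ and $\mu_{X_\bu}$ from the definitions of forms and maps on \'etale stacks), so your approach is in line with what the paper defers to.
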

In the situation that $\B X_\bu\simeq \X$ and $\B G_\bu \simeq \G$ satisfy the conclusion of Theorem~\ref{appAthm}~\eqref{presentactionitem}, then we will say that the action of $G_\bu$ on $X_\bu$ \emph{presents} the action of $\G$ on $\X$.

\begin{remark}
It is natural to ask if Theorem~\ref{appAthm} can be strengthened the following form: Given an action of an \'etale Lie group stack $\G$ on an \'etale stack $\X$, there exist \'etale groupoids $G_\bu$ and $X_\bu$, presentations $\B G_\bu \simeq \G$ and $\B X_\bu \simeq \X$, and an action of $G_\bu$ on $X_\bu$ which presents the action of $\G$ on $\X$. The proof of~\cite[Theorem~6.8.1]{hoffman-sjamaar;hamiltonian-stack} does not guarantee this is possible, but no counterexamples to this stronger claim are known.
\end{remark}
%
%\subsection*{Symplectic reduction}
%
%Let $(\X,\bomega,\G,\bmu)$ be a Hamiltonian stack. Assume the action of $\G$ on $\Lie(\G)^*$ is trivial, and assume $p\in \Lie(\G)^*$ is a regular value of $\bmu$. The reduced stack $\bmu\n(p)/\G$ at $p$ is defined, when it exists, in Section 9 of \cite{hoffman-sjamaar;hamiltonian-stack}. Rather than recalling the full definition, we state the following corollary of Theorem 9.1 of \cite{hoffman-sjamaar;hamiltonian-stack}. \note{What is a regular value of a map between stacks (e.g., of $\mu$)?}
%\begin{lemma}
%\label{equiv6}
%Let $(\X, \bomega, \G, \bmu)$ and $p\in \Lie(\G)^*$ be as above. Then the following are equivalent:
%\begin{enumerate}
%\item The reduced stack $\bmu\n(p)$ over $p$ exists and is equivalent to a point.
%\item Assume $\G\simeq \B G_\bu$, and $\bmu\n(p) \simeq \B Y_\bu$, and the action of $\G$ on $\bmu\n(p)$ is presented by an action of $G_\bu$ on $ Y_\bu$. Then for a point $x\in Y_0$, the action map
%\[
%G_\bu\times \{x\} \to Y_\bu
%\]
%is a Morita equivalence.
%\end{enumerate}
%\end{lemma}

\section{Toric Symplectic Stacks}

\label{section3}

In this section, we define stacky tori and toric symplectic stacks.

\begin{definition}
A \emph{$2$-torus} is a Lie $2$-group which is Morita
equivalent to a foliation $2$-group $G_\bu$, with the property that
$G_0$ is a torus and $\pi_0(G_1)$ is finitely generated. A \emph{stacky torus} is an \'etale Lie group stack equivalent to $\B G_\bu$, where $G_\bu$ is a $2$-torus. A \emph{quasilattice} $\partial\colon Q\to V$ is a crossed Lie module, where $V\cong \R^n$, $Q$ is a finitely generated abelian group, and $\partial(Q)$ spans $V$ as a vector space over $\R$.
\end{definition}

A quasilattice $\partial\colon Q\to V$ is a crossed module, and so by definition it comes with an action of $V$ on $Q$. However, because $V$ is connected and abelian, this action is automatically trivial~\cite[Lemma~6.11.1]{hoffman-sjamaar;hamiltonian-stack}. Because $Q$ is assumed to be finitely generated, we have $\operatorname{Ext}_\Z^1(\partial(Q),\ker\partial)=0$. Let us fix a splitting $Q  = \ker \partial\times \partial(Q)$, once and for all.
 
 If $\partial \colon Q \to V$ is a quasilattice, then the associated Lie $2$-group $Q\ltimes V\rightrightarrows V$ is a $2$-torus. Indeed, if $\Lambda\subset \partial(Q)$ is a lattice in $V$, then the quotient map 
 \[
 (\ker\partial \times \partial(Q)) \ltimes V \to (\ker \partial \times \partial(Q)/\Lambda)\ltimes V/\Lambda
 \]
  is a Morita equivalence. Conversely, if $\G$ is a stacky torus, recall from~\cite[Definition~5.1]{trentinaglia-zhu;strictification} that $\pi_1(\G)$ denotes the group of based loops $S^1\to \G$, modulo homotopy. Due to \cite[Proposition~6.11.3]{hoffman-sjamaar;hamiltonian-stack}, the stacky torus $\G$ is presented by the quasilattice $\pi_1(\G) \to \Lie(\G)$.

\begin{definition}
\label{toricdef}
Let $\G$ be a stacky torus, and let $(\X,\bomega,\G,\bmu)$ be a Hamiltonian $\G$-stack. It is a \emph{compact toric symplectic stack} if the following hold:
\begin{enumerate}
\item \label{cond1} $2\dim\G=\dim \X$.
\item \label{cond4} $\X^{coarse}$ is compact and connected.
\item \label{cond5} The coarse reduced space $(\bmu^{coarse})\n(p)/\G^{coarse}$ is homeomorphic to a point, for all $p\in \bmu(\X)$.
\item \label{cond6} If $p\in \bmu(\X)\subset \Lie(\G)^*$ is a regular value of $\bmu$, and 
\[
x\colon \star \to \bmu\n(p) := \X\times_{\Lie(\G)^*} \{p\}
\]
 is a categorical point of $\X$, then the action map $\G\times \star \hookrightarrow \G \times \bmu\n(p) \to \bmu\n(p)$ is an equivalence of stacks.
\item \label{cond2} The set of regular points of $\bmu^{coarse}$ is open and dense in $\X^{coarse}$.
\end{enumerate}
\end{definition}

\begin{remark}
Conditions \eqref{cond1}, \eqref{cond4}, and \eqref{cond2} are generalizations of the requirements placed on a compact toric symplectic manifold. If $(M,\omega,G,\mu)$ is a toric symplectic manifold and $p\in \mu(M)$, then $G$ acts transitively on $\mu\n(p)$. If $p$ is a regular value of $\mu$, this action is also free. Conditions \eqref{cond5} and \eqref{cond6} are stacky reformulations of these facts. In view of~\cite[Section~9]{hoffman-sjamaar;hamiltonian-stack}, Condition~\eqref{cond6} is equivalent to the requirement that, for each regular value $p$ of $\bmu$, the stacky reduced space $\bmu\n(p)/\G$ is equivalent (as a stack) to a point.

If \eqref{cond5} and \eqref{cond6} were not included as part of the definition of a compact toric symplectic stack, then a classification along the lines of Theorem \ref{maintheorem} would be difficult: since non-Hausdorff manifolds can be thought of as stacks, one could form a stack satisfying \eqref{cond1}, \eqref{cond4}, and \eqref{cond2} by taking a compact toric symplectic manifold $(M,\omega, G,\mu)$ and making it into a non-Hausdorff manifold by making a fixed point of the $G$ action into a double point. Or, more generally, by doubling any $G$-orbit.
\end{remark}

\begin{lemma} \label{factlemma}
Let $(\X,\bomega,\G,\bmu)$ be a compact toric symplectic stack. Fix a presentation $X_\bu$ of $\X$. Let $p\in \bmu(\X)$ and $v\in \Lie(\G)$. If $x,y\in \mu_{X_0}\n(p)$, then 
\[
\langle d\mu_{X_0}|_x, v \rangle=0 \text{ if and only if } \langle d\mu_{X_0}|_y, v \rangle =0.
\]
\end{lemma}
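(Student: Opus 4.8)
The strategy is to reduce the statement to a connectedness argument on a single reduced fiber, using conditions \eqref{cond5} and \eqref{cond6} from Definition \ref{toricdef}. First, observe that by condition (ii) in the definition of a Hamiltonian stack, for $v \in \Lie(\G) = \Lie(G_0)/\mathfrak{n}$ with representative $\tilde v \in \Lie(G_0)$, we have $\langle d\mu_{X_0}|_x, \tilde v\rangle = (\iota_{\tilde v_{X_0}}\omega_{X_0})|_x$. So the condition $\langle d\mu_{X_0}|_x, v\rangle = 0$ says exactly that the fundamental vector field $\tilde v_{X_0}$ lies in $\ker\omega_{X_0}$ at the point $x$, i.e.\ $\tilde v_{X_0}(x) \in T_x\F$ where $\F$ is the null foliation (equivalently the foliation by $X_1$-orbits). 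Since $\dim T_x\F$ is constant in $x$, the set $Z_v = \{x \in X_0 : \langle d\mu_{X_0}|_x, v\rangle = 0\}$ is where the map $x \mapsto \tilde v_{X_0}(x) \bmod T\F \in \Gamma(N_0)$ vanishes; more usefully, $Z_v$ is the set where $\tilde v_{X_0}(x)$ belongs to the span of the fundamental vector fields of $\mathfrak n$ (which always lie in $T\F$ since $\mathfrak n$ acts trivially on $\Lie(\G)^*$ and by definition $\iota_{w_{X_0}}\omega_{X_0}=0$ for $w\in\mathfrak n$).

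Next I would use a partition-of-$\mu_{X_0}^{-1}(p)$ argument. The key point is that on $\mu_{X_0}^{-1}(p)$ the locus $Z_v$ is both open and closed, so it is a union of connected components of the fiber; and then conditions \eqref{cond5} and \eqref{cond6} force $\mu_{X_0}^{-1}(p)$ (or rather its image in $\X^{coarse}$) to be connected enough that $Z_v$ is all of it or none of it. Closedness of $Z_v \cap \mu_{X_0}^{-1}(p)$ is clear since $Z_v$ is closed in $X_0$. For openness: at a point $x$ where $\tilde v_{X_0}(x) \in T_x\F$, one can use that $\mu_{X_0}$ is $G_0$-invariant (condition (iii)) together with the local normal-form structure — but more simply, the rank of $d\mu_{X_0}$ restricted along the $G_0$-orbit directions is governed by how many of the $\tilde v_{X_0}$'s are tangent to $\F$, and this rank is lower semicontinuous while the total statement that the reduced space is a point (condition \eqref{cond5}) pins the dimension down. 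I would make this precise by noting that $\mu_{X_0}^{-1}(p)$ is, set-theoretically, saturated by $X_1$-orbits and by $G_0$-orbits, and that $Z_v$ is also saturated by both (the $G_0$-invariance of $\mu_{X_0}$ gives $G_0$-saturation via differentiating the relation $\mu_{X_0}(g\cdot x)=\mu_{X_0}(x)$; the $X_1$-saturation follows since $s^*\omega_{X_0}=t^*\omega_{X_0}$ and $s^*(\tilde v_{X_0})$, $t^*(\tilde v_{X_0})$ agree modulo $\ker ds + \ker dt$ because the action of $G_0$ is compatible with the groupoid structure). The images of $Z_v\cap\mu_{X_0}^{-1}(p)$ and its complement in $(\mu^{coarse})^{-1}(p)/\G^{coarse}$ are then disjoint; since by \eqref{cond5} that quotient is a single point, one of them is empty.

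The main obstacle I anticipate is making the openness of $Z_v \cap \mu_{X_0}^{-1}(p)$ rigorous without a clean local normal form in hand at this point in the paper — in the toric \emph{manifold} case this is immediate from the fact that the fiber over a point is a single $G$-orbit and the isotropy representation controls everything, but here $p$ need not be a regular value and $\X$ need not be an orbifold, so one must argue at the level of the presenting groupoid. My plan to circumvent this is to separate the argument into the case of regular $p$ (where \eqref{cond6} directly says $\G$ acts with the fiber a single orbit-stack, so $Z_v\cap\mu_{X_0}^{-1}(p)$ is either empty or everything because the condition $\tilde v_{X_0}\in T\F$ is $G_0$-invariant and the $G_0$-orbits map onto the orbit-stack) and then extending to arbitrary $p$ by a density/continuity argument using condition \eqref{cond2}: the set of $x$ with $\langle d\mu_{X_0}|_x,v\rangle \neq 0$ is open, so if it met $\mu_{X_0}^{-1}(p)$ at some $y$ it would meet it on a neighborhood, and by density of regular points and the connectedness supplied by \eqref{cond5} one propagates the non-vanishing back; conversely the vanishing locus is closed. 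Assembling these two cases gives the dichotomy on the whole fiber, which is exactly the statement of the lemma.
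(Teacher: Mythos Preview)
Your saturation argument in the second paragraph is correct and is precisely the paper's proof: the vanishing locus $Z_v$ is invariant under $X_1$-orbits (since $\mu_{X_0}$ is basic, hence $s^*d\langle\mu_{X_0},v\rangle=t^*d\langle\mu_{X_0},v\rangle$ and $s,t$ are submersions) and under the $G_0$-action (since $\mu_{X_0}$ is $G_0$-invariant), so by condition~\eqref{cond5} the image of $Z_v\cap\mu_{X_0}^{-1}(p)$ in the one-point space $(\bmu^{\coarse})^{-1}(p)/\G^{\coarse}$ is either everything or nothing. The paper phrases this as: the vanishing of $\langle d\mu_{X_0}|_x,v\rangle$ depends only on the image of $x$ in $\X^{\coarse}$, so one may work in a single convenient presentation (chosen via Theorem~\ref{appAthm} so that $G_0$ acts on $X_0$); then for $x,y\in\mu_{X_0}^{-1}(p)$ condition~\eqref{cond5} gives $g\in G_0$ with $g\cdot x$ in the $X_1$-orbit of $y$, and both moves preserve the vanishing.

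Everything else in your plan is an unnecessary detour. You do not need $Z_v\cap\mu_{X_0}^{-1}(p)$ to be open, you do not need to split into regular versus singular $p$, and you do not need condition~\eqref{cond2} or any density argument. The moment you have established that $Z_v\cap\mu_{X_0}^{-1}(p)$ and its complement are both $G_0$- and $X_1$-saturated and map to disjoint subsets of a singleton, the proof is over; no topology on the fiber is required. Your third paragraph is solving a problem that does not exist. The only genuine prerequisite you glossed over is that for the $G_0$-saturation step you must first pass to a presentation on which $G_0$ actually acts, which is supplied by Theorem~\ref{appAthm}; the paper makes this reduction explicit.
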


\begin{proof}
%For \eqref{fact1}: The moment map $\bmu^{coarse}$ factors as
%\[
% \X^{coarse} \to \X^{coarse}/\G^{coarse} \to \bmu(\X).
%\]
%The last map is a homeomorphism: by Definition \ref{toricdef}\eqref{cond5} it is a continuous bijection from a compact space to a Hausdorff space. 
%
%For \eqref{fact2}: First, notice that if $x,x'$ are in the same $X_1$-orbit, then there is a germ of a diffeomorphism (which preserves fibers of $\mu_{X_0}$), taking $x$ to $x'$. So, $\langle d\mu_{X_0} |_x, v \rangle =0 $ if and only if $\langle d\mu_{X_0} |_{x'}, v\rangle =0$. So, the vanishing of $\langle d\mu_{X_0}|_x, v \rangle$ depends only on the image of $x$ under $X_0\to \X^{coarse}$. 
Let $x'\in \X^\coarse$ be the image of $x$ under $X_0\to X_0/X_1 \cong \X^{coarse}$; then $\langle d\mu_{X_0}|_x, v \rangle$ vanishes if and only if $x'$ is a regular point of $\langle \bmu^{coarse}, v\rangle$.
 It therefore sufficient to prove the claim for a single choice of groupoid presentation $X_\bu$.
Fix a presentation $G_\bu$ of $\G$, with $G_0$ connected. Following parts  \eqref{firstpresentactionitem} and \eqref{presentactionitem} of Theorem \ref{appAthm}, let us choose $X_\bu$ to be a presentation of $\X$ so that the action of $\G$ on $\X$ is presented by an action of $G_\bu$ on $X_\bu$. Let $x,y\in \mu_{X_0}\n(p)$. By Definition \ref{toricdef} \eqref{cond5}, there is some $g\in G_0$ so that $g\cdot x$ is in the $X_1$-orbit of $y$. Without loss of generality we can assume $g\cdot x=y$. Then, since $G_0$ acts parallel to fibers of $\mu_{X_0}$, we have $\langle d\mu_{X_0} |_x, v \rangle=0$ if and only if $\langle d\mu_{X_0}|_y, v \rangle =0$
\end{proof}

\section{Local normal form}

\label{section4}

In this section we show that compact toric symplectic stacks are locally presented by an action groupoid, for an action of a discrete abelian group on a piece of a toric symplectic manifold. This local structure informs many of the proofs which follow in Sections \ref{section5}-\ref{section9}. Most of the section is devoted to proving Proposition \ref{initiallocal}; this result is combined with the local normal form for toric symplectic manifolds to give the desired Theorem \ref{localnormalform}.

\begin{proposition}
\label{initiallocal}
Let $(\X,\bomega,\G,\bmu)$ be a compact toric symplectic stack, and fix a quasilattice $\partial\colon Q\to \R^n$ presenting $\G$. For $p\in \bmu(\X)$, there is an open neighborhood $U\subset \bmu(\X)$ of $p$, and a lattice $\Lambda\subset \partial(Q)$ in $\R^n$ so that
 so that
\[
(\bmu\n(U),\bomega,\G,\bmu) \simeq ( \B Z_\bu , \B\omega_{Z_\bu} , \B G_\bu, \B \mu_{Z_\bu})
\]
as Hamiltonian $\G$-stacks. Here, $G_0 = \R^n/\Lambda \cong \T^n$ and $G_1= (\ker\partial\times \partial(Q)/\Lambda)\ltimes G_0$ is an \'etale presentation of $\G$. The groupoid $Z_\bu$ is \'etale and $Z_0$ is connected. The tuple
$(Z_\bu,\omega_{Z_\bu},G_\bu,\mu_{Z_\bu})$ is a Hamiltonian $G_\bu$ groupoid, and  $G_0$ acts effectively on $Z_0$. 
\end{proposition}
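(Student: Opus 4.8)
The plan is to reduce everything to a statement about ordinary presymplectic Hamiltonian $G_0$-manifolds and then invoke the machinery of Theorem~\ref{appAthm}. First I would fix the quasilattice $\partial\colon Q\to\R^n$ presenting $\G$, pick any lattice $\Lambda\subset\partial(Q)$ in $\R^n$, and form the étale Lie $2$-group $G_\bu$ with $G_0=\R^n/\Lambda\cong\T^n$ and $G_1=(\ker\partial\times\partial(Q)/\Lambda)\ltimes G_0$; by the discussion following the definition of quasilattice, the quotient map exhibits $G_\bu$ as Morita equivalent to $Q\ltimes\R^n$, so $\B G_\bu\simeq\G$. By Theorem~\ref{appAthm}\eqref{firstpresentactionitem}, after possibly shrinking to $\bmu\n(U)$ I can choose a presentation $X_\bu$ of $\X$ together with a \emph{free} action of $G_0=\T^n$ on $X_0$ presenting the $\G$-action, and by part~\eqref{presentactionitem} this extends to a strict $G_\bu$-action on $X_\bu$; part (iii) then furnishes the $0$-symplectic form $\omega_{X_\bu}$ and moment map $\mu_{X_\bu}$ so that $(\X,\bomega,\G,\bmu)\simeq(\B X_\bu,\B\omega_{X_\bu},\B G_\bu,\B\mu_{X_\bu})$ as Hamiltonian $\G$-stacks. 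Since $X_\bu$ presents an étale stack it is a foliation groupoid; because $G_0$ acts freely on $X_0$, the orbit map $\R^n\to X_0$ (through the Lie algebra action) is injective, which I will use to control the null foliation of $\omega_{X_0}$.

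The second step is to replace $X_\bu$ by an \emph{étale} presentation $Z_\bu$ with $Z_0$ connected on which $G_0$ still acts, now only with discrete stabilizers. Here I would take a small enough $G_0$-invariant open set $Z_0\subset X_0$: concretely, pick a point $z$ over the relevant orbit, and since $T\F$ has constant rank and the $G_0$-orbit directions are transverse to $\ker\omega_{X_0}\cap(\text{orbit directions})$... more carefully, because $X_\bu$ is a foliation groupoid the leaf space is locally nice, so I can choose $Z_0$ a connected $G_0$-saturated open transversal-times-leaf neighborhood such that the restricted groupoid $Z_\bu:=X_\bu|_{Z_0}$ is étale (shrink until the source map is a local diffeomorphism, using that the isotropy is discrete). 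Restricting $\omega_{X_\bu}$, $\mu_{X_\bu}$ and the $G_\bu$-action to $Z_\bu$ gives a Hamiltonian $G_\bu$-groupoid $(Z_\bu,\omega_{Z_\bu},G_\bu,\mu_{Z_\bu})$ with $\B Z_\bu\simeq\bmu\n(U')$ for a possibly smaller neighborhood $U'$ of $p$. The $0$-symplecticity forces $\omega_{Z_0}$ to be nondegenerate precisely because $Z_\bu$ is étale (the parenthetical remark after the definition of $0$-symplectic groupoid), so $(Z_0,\omega_{Z_0})$ is an honest symplectic manifold with a Hamiltonian $G_0=\T^n$-action whose moment map is $\mu_{Z_0}$.

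The third and final step is to arrange that $G_0$ acts \emph{effectively} on $Z_0$, and to read off the dimension count. The global stabilizer $K\subset G_0=\T^n$ of the $G_0$-action on $Z_0$ is a closed subgroup; since $\bmu$ is $\G$-invariant (trivial coadjoint action) and the action presents that of the stacky torus $\G$, this $K$ is contained in the kernel of the $\G$-action, and dividing it out does not change $\B G_\bu$ as a Lie group stack nor the $0$-symplectic/Hamiltonian data — I would replace $G_\bu$ by $G_\bu/K$ (which is again an étale $2$-torus presenting $\G$, after adjusting $\Lambda$) and $G_0$ by $G_0/K$, a torus of the same dimension $n$ by Condition~\eqref{cond1} combined with the computation of $\dim\G$. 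Finally, $\dim Z_0=\dim X_0=\dim\X=2\dim\G=2n$, so $(Z_0,\omega_{Z_0},G_0/K,\mu_{Z_0})$ is a symplectic $\T^n$-manifold of dimension $2n$ with effective Hamiltonian torus action, i.e.\ a piece of a toric symplectic manifold, as claimed. The main obstacle I anticipate is the second step: producing a connected $G_0$-invariant open $Z_0$ on which the restricted groupoid is genuinely étale (not merely a foliation groupoid) while keeping $\B Z_\bu$ equivalent to the preimage of a neighborhood of $p$ — this requires carefully using that foliation groupoids are Morita equivalent to étale ones and that the equivalence can be realized by restriction to a suitable transversal, compatibly with the torus action.
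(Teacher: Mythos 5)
There is a genuine gap, and it sits exactly where the real work of the paper's proof lies. You begin by choosing \emph{any} lattice $\Lambda\subset\partial(Q)$ and fixing $G_0=\R^n/\Lambda$ at the outset, but the lattice in Proposition \ref{initiallocal} is not arbitrary: it is determined by the stack near $p$, namely by the stabilizers in $\R^n$ of points of an \'etale local presentation (equivalently, it encodes the local isotropy, e.g.\ the Lerman--Tolman facet labels in the orbifold case of Remark \ref{toricorbifold}). For a generic choice of $\Lambda$ there is no \'etale presentation $Z_\bu$ of $\bmu\n(U)$ with an effective Hamiltonian $\R^n/\Lambda$-action at all. Producing the correct $\Lambda$ — and showing a single lattice works over a whole neighborhood $U$, not just on one chamber of regular values — is the content of Lemma \ref{descendlemma}, which in turn relies on the connectedness of the set of regular values (Lemma \ref{connected}, itself a nontrivial topological argument); your proposal omits both.

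The second step also fails as described. Theorem \ref{appAthm}\eqref{firstpresentactionitem} gives a presentation $X_\bu$ with a \emph{free} $G_0$-action, but such an $X_\bu$ is necessarily a non-\'etale foliation groupoid (near a vertex of the polytope the $n$-dimensional free $G_0$-orbits lie inside single leaves, so $\dim X_0>2n$), and restricting a foliation groupoid to a $G_0$-invariant open subset never makes it \'etale: the leaf dimension does not drop on open sets, so ``shrink until the source map is a local diffeomorphism'' cannot work. To get an \'etale presentation one must pass to a transversal, but then the compact torus does not act on it; what survives is only the infinitesimal $\g=\R^n$-action by Hamiltonian vector fields of $\bmu$, and one must (a) prove these vector fields are complete near the relevant orbit (the paper's Lemma \ref{vfcompletelemma}, proved by a compactness/second-countability argument), and (b) show the resulting $\R^n$-action descends to an \emph{effective} action of a compact quotient torus $\R^n/\Lambda$ — which brings you back to the missing Lemmas \ref{connected} and \ref{descendlemma}. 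Your final step, quotienting $G_0$ by a ``global stabilizer $K$,'' does not repair this: the action you started from is free, so there is nothing to quotient by, and the effectiveness issue is not about removing a kernel from a pre-chosen torus but about identifying which torus acts on the \'etale model in the first place.
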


\begin{proof}

The structure of the proof is as follows. After some initial setup, we prove four lemmas. In Lemma \ref{ESlemma} we build an \'etale presentation of the preimage under $\bmu$ of some small open set $U\subset \bmu(\X)$. In Lemma \ref{vfcompletelemma} we show that the Hamiltonian vector fields of $\bmu$ can be integrated on this presentation (possibly after shrinking $U$). The action of $\R^n$ coming from these vector fields descends to an effective action of a torus according to Lemma \ref{descendlemma}; for this we use Lemma \ref{connected}, which says that the regular values of $\bmu(\X)$ are connected in $\bmu(\X)$. The proposition then quickly follows.

Fix an \'etale presentation $\tilde{G}_\bu$ of $\G$, with $\tilde{G}_0=\R^n/\Z^n =\T^n$. Let $X_\bu$ be a presentation of $\X$ so that the action of $\G$ on $\X$ comes from an action of $\tilde{G}_\bu$ on $X_\bu$, with the action of $\tilde{G}_0$ on $X_0$ free, as in Theorem \ref{appAthm}. Let $x\in \mu_{X_0}\n (p)$. Choose a basis $(v^i)_{i\in[n]}$ of $\g=\Lie(\G)\cong \R^n$ so that $\langle d\mu_{X_0}|_x, v^i \rangle=0$ for $i=1,\dots, k$, and so that the covectors $\langle d\mu_{X_0}|_x, v^i \rangle$, $i=k+1,\dots, n$ are linearly independent. We can assume that the vectors $v^i$ are contained in the cocharacter lattice $\Hom(S^1,\tilde{G}_0)\cong \Z^n\subset \g$, for $i=k+1,\dots, n$.
Decompose $\g$ as
\[
\g = \mathfrak{s} + \mathfrak{k}
\]
where $\mathfrak{s}=\operatorname{span}\{v^i\mid i=1,\dots, k \}$ and $\mathfrak{k}=\operatorname{span}\{ v^i\mid i=k+1,\dots, n\}$. Let $S \cong \R^{k}$ be the simply connected group with Lie algebra $\mathfrak{s}$, and let $K\hookrightarrow G$ be the connected subgroup of $G$ with Lie algebra $\mathfrak{k}$. Then $K$ is a compact torus of dimension $n-k$, and we have an immersion $S\to \tilde{G}_0$ induced by the inclusion $\mathfrak{s}\hookrightarrow \mathfrak{g}$. Note that $x$ is a fixed point of the Hamiltonian action of $S$ on $X_0$.

Let $W$ be a small open neighborhood of $x$ on which $\mathfrak{k}$ acts nontangently to the null foliation $\ker \omega_{X_0}$. By replacing $W$ with $S\cdot W$, we can assume that $W$ is saturated by $S$ orbits. In doing so, we still have that $\mathfrak{k}$ acts nontangently to the null foliation on all of $W$. %(by Lemma \ref{factlemma}\eqref{fact2} and the fact that $\tilde{G}_0$ acts tangent to fibers of $\mu_{X_0}$). 

Let $D$ be the subbundle of $TW$ spanned by $\ker \omega_{X_0}$ and the fundamental vector fields of $\mathfrak{k}$. Let $\Sigma$ be a transversal to $D$ passing through $x$. By shrinking $W$ we can assume $\Sigma$ is a connected immersed submanifold of $X_0$.

\begin{lemma} \label{ESlemma} The action map
\[
a: K\times \Sigma \to \mu_{X_\bu}\n(\mu_{X_0}(W))
\]
is essentially surjective. So, the pullback groupoid $Y_\bu$ over $Y_0:= K\times \Sigma$ is an \'etale presentation of $\bmu\n(\mu_{X_0}(W))=\bmu\n(\mu_{Y_0}(Y_0))$.
\end{lemma}

\begin{proof}
The map $a$ factors as
\[
\begin{tikzcd}
K\times \Sigma \arrow[r]&  K\cdot W \arrow[r, hookrightarrow, "\iota"]& \mu_{X_\bu}\n(\mu_{X_0}(W))
\end{tikzcd}
\]
We first show the inclusion $\iota$ is essentially surjective; for this, we need to check that
\[
t\circ \pr_2 \colon (K\cdot W)\times_{a,X_0,s} X_1 \to \mu_{X_0}\n(\mu_{X_0}(W))
\]
is a surjective submersion. It is a submersion onto its image because $K\cdot W$ is open in $X_0$. To check it is surjective, let $y\in \mu_{X_0}\n(\mu_{X_0}(W))$.  By Definition \ref{toricdef} \eqref{cond5}, there is some $g\in \tilde{G}_0$ and $x'\in W$ so that $g\cdot x'$ is in the $X_1$-orbit of $y$. Because $W$ is saturated by $S$ orbits, we can assume $g\in K$. So there is $g\cdot x'\in K\cdot W$ which is in the $X_1$-orbit of $y$, as desired.

Now, consider the pullback groupoid $\mu_{X_1} \n(\mu_{X_0}(W)|_{K\cdot W}\rightrightarrows K\cdot W$. The map $K\times \Sigma\to K\cdot W$ is a complete transversal to the null foliation $\ker \omega_{X_0}$ of $K\cdot W$, and so this map is essentially surjective onto $\mu_{X_\bu}\n(\mu_{X_0}(W)|_{K\cdot W}$. Therefore, the composition $K\times \Sigma\to \mu_{X_\bu}\n(\mu_{X_0}(W))$ is essentially surjective.
\end{proof}

We will now consider the $0$-symplectic groupoid $(Y_\bu,\omega_{Y_\bu})$ constructed in Lemma \ref{ESlemma}, where $\omega_{Y_\bu}$ is the pullback of $\omega_{X_\bu}$ under the natural map $\iota_\bu \colon Y_\bu \to X_\bu$. Note that $(Y_0,\omega_{Y_0})$ is symplectic because $Y_\bu$ is \'etale.  Consider the moment map $\mu_{Y_\bu}= \mu_{X_\bu}\circ \iota_\bu$. We will abbreviate $x = (e,x)\in Y_0$. For $v\in \mathfrak{s}$, consider the Hamiltonian vector field $v_{Y_0}$ on $Y_0$, defined by $\iota_{v_{Y_0}} \omega_{Y_0} = \langle d\mu_{Y_0}, v\rangle$. 

\begin{lemma}
\label{vfcompletelemma}
There is an open neighborhood $Z_0$ of $K\cdot x$ in $Y_0$, on which the vector fields $v_{Y_0}$ are complete, for all $v\in \mathfrak{s}$.
\end{lemma}
 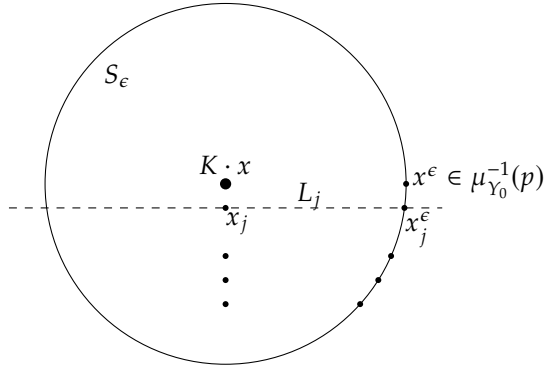
\begin{figure}[htp]
\centering
\begin{tikzpicture}[scale=1.6]	
		\draw (0,0) circle (1.5);
		\node at (0,0)[circle,fill,inner sep=1.5pt]{};
		\draw (0,0) node[align=left, above] {$K\cdot x$};
		\node at (0,-1)[circle,fill, inner sep=.8pt]{};
		\node at (0,-.2)[circle,fill, inner sep=.8pt]{};
		\draw (.1,-.5) node[align=left, above] {$x_j$};
		\node at (0,-.6)[circle,fill, inner sep=.8pt]{};
		\node at (0,-.8)[circle,fill, inner sep=.8pt]{};
		\draw[dashed] (-1.8,-.2)--(1.8,-.2);
		\draw (.7,-.3) node[align=left, above] {$L_j$};
		\draw (-.9,.9)  node[align=left] {$S_\epsilon$};
		\node at (1.486,-.2)[circle,fill, inner sep=.8pt]{};
		\draw (1.6,-.6)node[align=left, above] {$x_j^\epsilon$};
		\node at (1.5,0)[circle,fill, inner sep=.8pt]{};
		\draw (2.1,-.2)node[align=left, above] {$x^\epsilon\in \mu_{Y_0}\n(p)$};
		\node at (1.375,-.6)[circle,fill, inner sep=.8pt]{};
		\node at (1.269,-.8)[circle,fill, inner sep=.8pt]{};
		\node at (1.118,-1)[circle,fill, inner sep=.8pt]{};
	\end{tikzpicture}	
	\caption{Proof of Lemma \ref{vfcompletelemma}. As $j\to \infty$, we have $x_j^\epsilon\to x^\epsilon$.}
\label{fig 0}
\end{figure}
\begin{proof} 
Put a $K$-invariant metric on $Y_0$. For small $\epsilon>0$, let $S_\epsilon$ be the set of points of distance $\epsilon$ from $K\cdot x\cong \T^{n-k}$, and let $B_\epsilon=\cup_{0\le \epsilon'< \epsilon} S_{\epsilon'}$. Fix a small $\delta>0$ so that $B_\delta$ is a tubular neighborhood of $K\cdot x$. Let $\g\cdot x$ denote the orbit of $x$ under the $\g$ action on $Y_0$.

Assume that in any open neighborhood $B$ of $K\cdot x$ in $Y_0$, there is a $v\in \g$ so that the trajectory of the Hamiltonian vector field $v_{Y_0}$ through a point $y\in B$ is not complete. Then, for any sufficiently large $j\in \Z_{>0}$, there is some $x_j\in B_{1/j}$ so that the submanifold $L_j=\g \cdot x_j$ escapes away from $K\cdot x$ and crosses $S_\delta$. For any given $0<\epsilon< \delta$, for large enough $j$, the intersection $L_j\cap S_\epsilon$ is nonempty. Let $x_j^\epsilon\in L_j\cap S_\epsilon$. For a fixed $\epsilon$, since $S_\epsilon$ is compact we can assume without loss of generality that there is some $x^\epsilon\in S_\epsilon$ so that $x_j^\epsilon\to x^\epsilon$ as $j\to \infty$. But,
\[
\mu_{Y_0}(x_j^\epsilon)=\mu_{Y_0}(L_j)=\mu_{Y_0}(x_j)\to \mu_{Y_0}(x)=p \text{ as }j\to \infty
\]
On the other hand,
\[
\mu_{Y_0}(x_j^\epsilon) \to \mu_{Y_0}(x^\epsilon) \text{ as }j \to \infty.
\]
So we then have $\mu_{Y_0}(x^\epsilon)=p$. 
 Because the metric on $Y_0$ is $K$-invariant, we have $K\cdot x^\epsilon\subset S_\epsilon$. From Lemma \ref{factlemma} and the fact that $ \mathfrak{s}\cdot x=x$ we know $\mathfrak{s}\cdot x^{\epsilon} = x^\epsilon$. So, $ \mathfrak{g}\cdot x^\epsilon = \mathfrak{k}\cdot x^\epsilon = K\cdot x^\epsilon \subset S_\epsilon$. 

By Definition \ref{toricdef}  \eqref{cond5}, for each $\epsilon, \epsilon'< \delta $, there is an arrow $\alpha\in Y_1$ with $s(\alpha)=x^\epsilon$, and $t(\alpha)\in K\cdot x^{\epsilon'}$. Since $Y_\bu$ is \'etale, the $s$-fiber $s\n(x^\epsilon)$ is a discrete set, it therefore has uncountably many components, one for each $\epsilon'$. This contradicts the assumption that $Y_\bu$ is second countable. Therefore, for some large $j$, the trajectories through points in $B_{1/j}$ are all contained in the tubular neighborhood $B_\delta$. Let $Z_0\subset Y_0$ be the neighborhood of $K\cdot x$ which is saturated by the $\mathfrak{g}$-trajectories through the points in $B_{1/j}$.
\end{proof}

By Lemma \ref{vfcompletelemma}, there is a Hamiltonian action of $S\times K$ on $(Z_0,\omega_{Z_0}=\omega_{Y_0}|_{Z_0})$, with moment map $\mu_{Z_0} = \mu_{Y_0}|_{Z_0}$. Notice $Z_0$ is of the form $Z_0=K\times \Sigma'$, where $\Sigma'$ is open in $\Sigma$. Then $Z_\bu$ is an \'etale presentation for  $\bmu\n(\mu_{Z_0}(Z_0))$, similarly to in Lemma \ref{ESlemma}. 

If $p\in \bmu(\X)$, then $\mu_{Z_0}\n(p)$ is a closed subset of the compact set $\overline{B_{\delta}}$, which is the closure of the tubular neighborhood considered in the proof of Lemma \ref{vfcompletelemma}. So $\mu_{Z_0}\n(p)$ is compact. In particular, if $p$ is a regular value of $\bmu$, then a component of $\mu_{Z_0}\n(p)$ is a compact submanifold of $Z_0$. By Definition \ref{toricdef} \eqref{cond5}, the action of $\R^n$ is transitive on each component of $\mu_{Z_0}\n(p)$. Therefore, components of regular fibers of $\mu_{Z_0}$ are homeomorphic to $\T^n$.

We then have an action of $\R^n$ on $Z_0$, so that the following diagram 2-commutes:
\[
\begin{tikzcd}
\R^n \times Z_0 \arrow[r] \arrow[d] & Z_0 \arrow[d] \\
\G \times \X \arrow[r] & \X.
\end{tikzcd}
\]
Consider the quasilattice $\partial \colon Q\to \R^n$ presenting $\G$. Let $\omega_{Z_1} = s^* \omega_{Z_0}$, and let $\mu_{Z_1} = s^* \mu_{Z_0}$. By Theorem \ref{appAthm}, there is a strict Hamiltonian action of $\partial\colon Q\to \R^n$ on $(Z_\bu, \omega_{Z_\bu})$ which presents the action of $\G$ on $\X$. Its moment map is $\mu_{Z_\bu}$.

\begin{lemma}
\label{connected}
The set of regular values of $\bmu$ is connected in $\Delta=\bmu(\X)$.
\end{lemma}

\begin{proof}
For $p\in \Delta$ and $x\in \mu_{X_0}\n(p)$, define
\[
\nu(p) = \dim \ker(d\mu_{X_0}|_x).
\]
By Lemma \ref{factlemma} this quantity does not depend on the choice of $x$ or the choice of presentation $X_\bu$ of $\X$. Let $\Delta_k = \{p\in \Delta \mid \nu(p) = k\}$. For each $p\in \Delta$, we will show that there is a connected open neighborhood $V_p$ of $p$ in $\Delta$ with the following properties: 
\begin{enumerate}
\item\label{inductivecond1} If $q\in V_p$, then $\nu(q)\le \nu(p)$.
\item\label{inductivecond2} If $k\ge \nu(p)>0$, then the set $V_p \backslash \Delta_{k}$ is connected. 
\end{enumerate}
%(Note that \eqref{inductivecond2} is implied by \eqref{inductivecond1} for $k>\nu(p)$).

For $p\in \Delta$, construct $V_p$ as follows: Let $x\in \mu_{X_0}\n(p)$, and form a Hamiltonian manifold $(Z_0,\omega_{Z_0},S\times K,\mu_{Z_0})$ centered at $x$, as in the discussion following Lemma \ref{vfcompletelemma}. By shrinking $Z_0$, we can assume that, for any $y\in Z_0$, we have $\nu(\mu_{Z_0}(y))\le \nu(p)$, this being an open condition. If $p$ is a regular value of $\mu$, then let $V_p= \mu_{Z_0}(Z_0)$. Otherwise, consider the action of $S\times K$ on $Z_0$. The action of $K\cong \T^{n-\nu(p)}$ is free and the action of $S\cong \R^{\nu(p)}$ fixes $x$. Since $p$ is not a regular value, we have $\dim K<\frac{1}{2} \dim \X$. Write $\mu_{Z_0} = \mu_{Z_0,S} \oplus \mu_{Z_0,K} \colon Z_0\to \mathfrak{s}^*\oplus \mathfrak{k}^*$. 

Let $M$ be the component of $x$ in $\mu_{Z_0,K}\n(\mu_{Z_0,K}(x))$. By the remarks preceding this lemma $M$ is compact. We will show $M\backslash (\mu_{Z_0}\n (\Delta_{\nu(p)}))$ is connected. The reduced space $M/K$ is a (nonempty) symplectic manifold of dimension $\dim\X-2\dim K\ge 2$. The $S$ action on $M$ descends to a Hamiltonian action on $M/K$, with moment map $\mu_{M/K,S}\colon M/K\to \mathfrak{s}^*$. The subspace $\mu_{Z_0}\n (\Delta_{\nu(p)})/K$ of $M/K$ is precisely the locus $(M/K)^S$ which is stabilized by the action of $S$. The space $(M/K)^S$ is totally disconnected, as follows. If $N\subset (M/K)^S$ is connected, then since $d\mu_{M/K,S}|_{(M/K)^S} = 0$, the map $\mu_{M/K,S}$ is constant on $N$. So $N$ is contained in a single fiber of $\mu_{M/K,S}$. By Definition \ref{toricdef} \eqref{cond5} and the fact that $Z_\bu$ is \'etale, $N$ must be a single point. Therefore, $(M/K)^S$ is compact and totally disconnected, and $\dim M/K\ge 2$. The complement of a compact, totally disconnected set in a manifold of dimension $\ge 2$ is connected \cite{thurston;alexander}. Therefore, the complement $(M\backslash \mu_{Z_0}\n (\Delta_{\nu(p)})) / K $ of $(M/K)^S$ in $M/K$ is connected. Because $K$ is connected, we have then found that in $M\backslash (\mu_{Z_0}\n (\Delta_{\nu(p)}))$ is connected.

Now, consider a connected open neighborhood $\tilde{V}$ of $M$ in $Z_0$. Then $\tilde{V}\backslash \mu_{Z_0}\n(\Delta_{\nu(p)})$ is an open neighborhood of $M\backslash (\mu_{Z_0}\n (\Delta_{\nu(p)}))$. Since $M\backslash (\mu_{Z_0}\n (\Delta_{\nu(p)}))$ is connected, by possibly shrinking $\tilde{V}$, we can assume $\tilde{V}\backslash \mu_{Z_0}\n(\Delta_{\nu(p)})$ is connected. Let $V_p = \mu_{Z_0}(\tilde{V})$. The set $V_p$ evidently has the desired properties \eqref{inductivecond1} and \eqref{inductivecond2}.

To show that $\Delta_0=\Delta\backslash (\Delta_1\cup\cdots \cup \Delta_n)$ is connected, we proceed by induction. First, $\Delta$ is connected by Definition \ref{toricdef} \eqref{cond4}. Now, let $k\ge 1$. Assume that $\Delta_{\le k}:=\Delta \backslash \left(\bigcup_{j>k} \Delta_j\right)$ is connected. The set $\Delta_{<k}:= \Delta_{\le k}\backslash \Delta_k$ has an open cover by $\{V_p\backslash \Delta_k\}_{p\in \Delta_{\le k}}$. By construction, each of these sets is connected. And, by Definition \ref{toricdef} \eqref{cond2}, the set $\Delta_k$ has empty interior. So, if $V_p\cap V_q \ne \emptyset$, then $(V_p\cap V_q)\backslash \Delta_k\ne \emptyset$. From this, it follows that $\Delta_{<k}$ is connected.
\end{proof}

\begin{lemma} \label{descendlemma} Let $(Z_0,\omega_{Z_0},S\times K,\mu_{Z_0})$ be as in the discussion following Lemma \ref{vfcompletelemma}. There is a unique lattice $\Lambda\subset \partial(Q)$ in $\R^n$ so that the Hamiltonian action of $\R^n$ on $Z_0$ descends to an effective action of $G_0:=\R^n/\Lambda\cong \T^n$ on $Z_0$. 
\end{lemma}

\begin{proof} Let $V$ be a small open subset of $Z_0$ which consists of only regular points of $\mu_{Z_0}$. Take a transversal $\Upsilon$ to the foliation by $\g$ orbits in $V$; by shrinking $V$ we can assume $\Upsilon$ is connected and intersects each orbit only once. Then, by Definition \ref{toricdef} \eqref{cond6}, the action map
\[
a: (Q\ltimes \R^n \times \Upsilon \rightrightarrows \R^n \times \Upsilon) \to \mu_{Z_\bu}\n(\mu_{Z_0}(V))
\]
is a Morita equivalence. In particular, for a point $y\in V$, the stabilizer $\stab_{\R^n}(y)$ of $y$ under the Hamiltonian action of $\R^n$ on $Z_0$ is contained in $\partial(Q)$.
 
Recall that the $\R^n$-orbits through points of $\Upsilon$ are homeomorphic to $\T^n$, and that $\Upsilon$ is connected. Since the stabilizer of any point in $\Upsilon$ is contained in the discrete group $\partial(Q)$, it follows that for any $y,y'\in \Upsilon$, the stabilizer subgroups $\stab_{\R^n}(y)$ and $\stab_{\R^n}(y')$ are equal. Also, by Definition \ref{toricdef} \eqref{cond5}, for any two points $y,y'\in \mu_{Z_0}\n(p)$ in the same fiber of $\mu_{Z_0}$, the stabilizer subgroups $\stab_{\R^n}(y)$ and $ \stab_{\R^n}(y')$ are equal. So, for a connected open subset $A\subset \Delta$ consisting of only regular values, there is a unique lattice $\Lambda\subset \partial(Q)\subset \R^n$ so that $\Lambda=\stab_{\R^n}(y)$ for all $y\in \mu_{Z_0}\n(A)$. So $\R^n/\Lambda$ acts freely on $\mu_{Z_0}\n(A)$. 
Since $Z_0$ is connected, by Lemma \ref{connected} the set of regular values of $\mu_{Z_0}$ is open and connected in $\Delta$. Definition \ref{toricdef} \eqref{cond2} implies that the regular values $\mu_{Z_0}$ are dense in $\mu_{Z_0}(Z_0)$, so the action of $\R^n$ descends to an effective Hamiltonian action of $G_0:= \R^n/\Lambda \cong \T^{k}$ on $Z_0$. 
\end{proof}

By Lemma \ref{descendlemma}, we have a lattice $\Lambda\subset \partial(Q)$ and a Hamiltonian action of $G_0= \R^n/\Lambda$ on $(Z_0,\omega_{Z_0})$ with moment map $\mu_{Z_0}$. The map $Z_0\to X_0 \to \X$ intertwines the moment maps, and so the diagram 2-commutes:
\[
\begin{tikzcd}
G_0 \times Z_0 \arrow[r] \arrow[d] & Z_0 \arrow[d] \\
\G \times \X \arrow[r] & \X.
\end{tikzcd}
\]
Again applying Theorem \ref{appAthm}, we have a strict Hamiltonian action of $G_\bu= (\ker\partial\times \partial(Q)/\Lambda)\ltimes G_0$ on $Z_\bu$. The tuple $(Z_\bu,\omega_{Z_\bu},G_\bu,\mu_{Z_\bu})$ is the desired Hamiltonian groupoid presenting $\bmu\n(U)$. Notice that the definition of $G_1$ uses our fixed splitting $Q=\ker\partial \times \partial(Q)$. 
\vspace{2em}
\end{proof}

With Proposition \ref{initiallocal} in hand, we now turn our attention to the structure of $Z_0$ as a Hamiltonian $G_0$-manifold.

Let $(M,\omega,G,\mu)$ be a Hamiltonian $G\cong \T$-manifold, and $\ca{O}=G\cdot x$ an orbit in $\mu\n(p)$. Let $V=T_x\ca{O}^{\omega}/T_x\ca{O}$ be the symplectic slice at $x$, then $(V,\omega_V)$ is a symplectic vector space with Hamiltonian $G_x=\stab_G(x)$-action, with moment map
\[
\mu_V: V\to \mathfrak{g}_x^*,\quad \langle \mu_V(v),\xi \rangle = \frac{1}{2}\omega_V(\xi_V(v),v), \quad \text{ for } v\in V,~\xi\in \g_x.
\]
Pick a splitting $\mathfrak{g}^*=\mathfrak{g}_x^*\oplus \ann(\g_x)$, and consider the associated bundle 
\[
E :=  (G\times^{G_x} V) \times \ann(\g_x) \quad \Big(\cong G\times^{G_x}(\ann(\g_x)\times V) \Big)
\]
over $G$. There is a symplectic form $\omega_E$ on $E$ (which depends on the choice of splitting $\mathfrak{g}^*=\mathfrak{g}_x^*\oplus \ann(\g_x)$), and a natural $G$-action on $E$. This $G$-action is Hamiltonian, with moment map $\mu_{E}\colon E \to \g^*$ given by
\[
\mu_E([g,v],q) = q+p+\mu_V(v); \qquad [g,v]\in G\times^{G_x} V,~q\in \ann(\g_x).
\]
We call the Hamiltonian space $E=(E,\omega_E,G,\mu_E)$ the \emph{model near} $\ca{O}$.
Then, we have the following local normal form near $G\cdot x$. 

\begin{theorem}[Hamiltonian Slice Theorem]
\label{HamSlice}
Let $(M,\omega,G,\mu)$ be a Hamiltonian $G\cong \T$-manifold, and $\ca{O}=G\cdot x$ an orbit in $\mu\n(p)$. There exists a  isomorphism (which depends on the splitting of $\g^*$) of Hamiltonian $G$-manifolds between a neighborhood of $\ca{O}$ in $M$ and the zero section of the model $(E,\omega_E,G, \mu_E)$ near $\ca{O}$. \end{theorem}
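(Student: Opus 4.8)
The plan is to prove this as the symplectic (Marle--Guillemin--Sternberg) slice theorem, taking advantage of the fact that $G\cong\T$ is abelian: the coadjoint action is trivial, so the orbit $\ca O$ is \emph{isotropic} in $(M,\omega)$, and the bundle $E$ defined above is exactly the Marle--Guillemin--Sternberg model (one way to see this is to realize $E$ as the symplectic reduction of $T^*G\times V$ by the residual $G_x$-action, shifted by $p$). I would produce the claimed isomorphism in three steps: (i) pin down the linear model at $x$; (ii) build a $G$-equivariant diffeomorphism from a neighbourhood of the zero section of $E$ onto a neighbourhood of $\ca O$ in $M$ that is correct to first order along $\ca O$; (iii) correct it to a symplectomorphism intertwining the moment maps by an equivariant Moser argument.

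\textbf{Step (i).} Since $\ca O$ is isotropic there is a flag $T_x\ca O\subset(T_x\ca O)^\omega\subset T_xM$. The defining condition $\iota_{\xi_M}\omega=d\langle\mu,\xi\rangle$ shows $(T_x\ca O)^\omega=\ker d\mu_x$ and that $d\mu_x$ descends to an isomorphism $T_xM/(T_x\ca O)^\omega\cong\ann(\g_x)\subset\g^*$ (the transpose map $\g\to T_x^*M$, $\xi\mapsto\iota_{\xi_M(x)}\omega$, has kernel exactly $\g_x$, and a dimension count does the rest). The middle subquotient $(T_x\ca O)^\omega/T_x\ca O$ is the symplectic slice $V$, and choosing $G_x$-invariant complements (possible as $G_x$ is compact) gives a $G_x$-module splitting $T_xM\cong(\g/\g_x)\oplus V\oplus\ann(\g_x)$, which matches $T_{([e,0],0)}E$. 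A direct check from the construction of $\omega_E$ and the formula for $\mu_E$ shows that under this identification $\omega_E$ and $\omega$ agree at $x$ and $\mu_E([e,0],0)=p=\mu(x)$.

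\textbf{Steps (ii) and (iii).} Fix a $G$-invariant Riemannian metric on $M$. The equivariant tubular neighbourhood theorem, combined with the $G$-equivariant identification of the normal bundle $TM|_{\ca O}/T\ca O$ with a neighbourhood of the zero section of $E$ coming from the associated-bundle construction and Step (i), produces a $G$-equivariant diffeomorphism $\psi_0$ from a neighbourhood of the zero section of $E$ onto a neighbourhood of $\ca O$, restricting to the orbit map $[g]\mapsto g\cdot x$ on the zero section and inducing the Step (i) isomorphism on normal directions. Put $\omega_1=\psi_0^*\omega$ and $\mu_1=\mu\circ\psi_0$; these are $G$-invariant, $\mu_1$ is a moment map for $\omega_1$, and $\omega_1=\omega_E$, $\mu_1=\mu_E$ hold at $x$, hence, by $G$-invariance and $\ca O=G\cdot x$, at every point of $\ca O$. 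Now $\omega_1-\omega_E$ is closed and vanishes at all points of $\ca O$; the (equivariant, by averaging over $G$) relative Poincaré lemma gives a $G$-invariant $1$-form $\beta$ with $d\beta=\omega_1-\omega_E$ and $\beta$ vanishing along $\ca O$. Subtracting $\iota_{\xi_E}\omega_E=d\langle\mu_E,\xi\rangle$ from $\iota_{\xi_E}\omega_1=d\langle\mu_1,\xi\rangle$, using Cartan's formula and $G$-invariance of $\beta$, gives $d\bigl(\iota_{\xi_E}\beta+\langle\mu_1-\mu_E,\xi\rangle\bigr)=0$; since this function vanishes on $\ca O$ we get $\iota_{\xi_E}\beta=-\langle\mu_1-\mu_E,\xi\rangle$ near $\ca O$. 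Finally I would run Moser: on a neighbourhood of $\ca O$ the forms $\omega_t=(1-t)\omega_E+t\omega_1$ are symplectic and $G$-invariant, the vector field $X_t$ with $\iota_{X_t}\omega_t=-\beta$ is $G$-invariant and vanishes on $\ca O$, so after shrinking it integrates to a $G$-equivariant isotopy $\phi_t$, $t\in[0,1]$, fixing $\ca O$ with $\phi_1^*\omega_1=\omega_E$; and the identity for $\iota_{\xi_E}\beta$ forces $\frac{d}{dt}\,\phi_t^*\langle(1-t)\mu_E+t\mu_1,\xi\rangle=0$, whence $\mu_1\circ\phi_1=\mu_E$. Then $\psi=\psi_0\circ\phi_1$ is the desired equivariant symplectomorphism intertwining the moment maps, and its only noncanonical ingredients are the splitting $\g^*=\g_x^*\oplus\ann(\g_x)$ (which enters through $\omega_E$ and $\mu_E$) and the auxiliary $G_x$-invariant complements, which do not affect the statement.

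\textbf{Expected main obstacle.} The routine parts are the equivariant tubular neighbourhood theorem and the equivariant Moser/Poincaré machinery, which go through verbatim because $G$ is compact. The point requiring genuine care is the interface between Steps (ii) and (iii): one must understand $\omega_E$ and $\mu_E$ concretely along the entire zero section $\ca O$, not merely at $x$ --- i.e.\ the horizontal/vertical decomposition of $TE$ along $G\times^{G_x}\{0\}$ --- in order to be sure that $\omega_1-\omega_E$ vanishes identically on $\ca O$ and that the moment-map bookkeeping $\iota_{\xi_E}\beta=-\langle\mu_1-\mu_E,\xi\rangle$ closes up. It is also worth double-checking that the first-order behaviour of $\psi_0$ along $\ca O$ can be arranged to be the Step (i) identification $G$-equivariantly; this follows from using the $G$-invariant exponential map, but it is the place where the construction must be set up carefully.
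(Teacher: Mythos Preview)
Your proposal is the standard Marle--Guillemin--Sternberg argument and is correct. However, note that the paper does not actually prove this theorem: it is stated without proof as the classical Hamiltonian slice theorem (the paper treats it as a known input, moving directly to Remark~\ref{sliceremark} and then to Theorem~\ref{localnormalform}). Your outline is exactly the textbook proof of this result, so there is nothing to compare --- you have supplied a proof where the paper simply cites the literature.
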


\begin{remark}  \label{sliceremark}
 Let $(M,\omega,G,\mu)$ be as in the previous theorem. When $2\dim G =\dim M$ and the action of $G$ on $M$ is effective, then the model $(E,\omega_E,G,\mu_E)$ is well understood; see for instance Remark 3.7 of \cite{lerman-tolman;hamiltonian-torus-actions-symplectic-orbifolds}. In particular, the action of $G$ on the regular locus of $\mu_E$ is free and the fibers of $\mu_E$ are connected. The moment map image $\mu_E(E)$ is a convex polyhedral cone in $\g^*$, which is rational with respect to the cocharacter lattice $\Hom(S^1,G)\subset \g$.
\end{remark}

\begin{theorem} [Local Normal Form]
\label{localnormalform}
Let $(\X,\bomega,\G,\bmu)$ be a compact toric symplectic stack. Fix a quasilattice $\partial\colon Q\to \R^n$ presenting $\G$. For $p\in \bmu(\X)$, there is an open neighborhood $U$ of $p$ and a lattice $\Lambda\subset \partial(Q)$ in $\R^n$ so that
\[
(\bmu\n(U),\bomega, \G,\bmu) \simeq (\B E_\bu, \B \omega_{E_\bu}, \B G_\bu, \B \mu_{E_\bu}).
\]
Here, $(E_0,\omega_{E_0},G_0,\mu_{E_0})$ is a neighborhood of the zero section in the model $E$ near a $G_0=\R^n/\Lambda$ orbit in a Hamiltonian $G_0$ manifold $(Z_0,\omega_{Z_0},G_0,\mu_{Z_0})$, and $E_\bu$ is the action groupoid $(\ker\partial \times \partial(Q)/\Lambda)\ltimes E_0$. The group $G_0$ acts effectively on $E_0$, and $G_\bu= (\ker\partial\times \partial(Q)/\Lambda)\ltimes G_0$ is an \'etale presentation of $\G$.
\end{theorem}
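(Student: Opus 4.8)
The plan is to combine Proposition~\ref{initiallocal} with the Hamiltonian Slice Theorem (Theorem~\ref{HamSlice}). By Proposition~\ref{initiallocal}, after shrinking to a neighborhood $U$ of $p$ in $\bmu(\X)$ we have an equivalence of Hamiltonian $\G$-stacks
\[
(\bmu\n(U),\bomega,\G,\bmu)\simeq(\B Z_\bu,\B\omega_{Z_\bu},\B G_\bu,\B\mu_{Z_\bu}),
\]
where $Z_\bu=(\ker\partial\times\partial(Q)/\Lambda)\ltimes Z_0$ is \'etale, $G_0=\R^n/\Lambda\cong\T^n$ acts effectively on the connected symplectic manifold $Z_0$, and $(Z_0,\omega_{Z_0},G_0,\mu_{Z_0})$ is a Hamiltonian $G_0$-manifold with $2\dim G_0=\dim Z_0$. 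Fix the point $\tilde x\in Z_0$ lying over $p$ (e.g.\ the point $K\cdot x$-representative from the construction), and set $\ca O=G_0\cdot\tilde x\subset\mu_{Z_0}\n(p)$.

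Next I would apply Theorem~\ref{HamSlice} to $(Z_0,\omega_{Z_0},G_0,\mu_{Z_0})$ at the orbit $\ca O$: this gives, for a suitable splitting of $\g^*$, a $G_0$-equivariant, moment-map-preserving symplectomorphism between a $G_0$-invariant open neighborhood $N$ of $\ca O$ in $Z_0$ and an open neighborhood $E_0$ of the zero section in the model $E$ near $\ca O$. Because $G_0$ acts effectively on $Z_0$ and $2\dim G_0=\dim Z_0$, Remark~\ref{sliceremark} applies to $E$, so $E_0$ inherits the good properties we want (free action on the regular locus, connected fibers, rational polyhedral cone image). We may and do take $E_0$ small enough that $\mu_{E_0}(E_0)$ is contained in the image of a neighborhood of $p$; shrink $U$ so that $U=\bmu(\B E_\bu)$ where $E_\bu=(\ker\partial\times\partial(Q)/\Lambda)\ltimes E_0$, which is an open sub-action-groupoid of $Z_\bu$ restricted to $N$.

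The remaining point is to promote this $G_0$-equivariant identification of manifolds to an equivalence of Hamiltonian $\G$-stacks. Since $E_0\subset N\subset Z_0$ is $G_0$-invariant and open, the restriction $E_\bu\hookrightarrow Z_\bu|_{N}$ is an open inclusion of \'etale groupoids, and $\B E_\bu$ is equivalent to the open substack $\bmu\n(U')\subset\B Z_\bu$ over the appropriate $U'$. The slice symplectomorphism is $G_0$-equivariant and intertwines $\mu$, hence by Theorem~\ref{appAthm}~(\ref{presentactionitem}) and~(3) it extends to a strict equivalence of the action groupoids $(\ker\partial\times\partial(Q)/\Lambda)\ltimes N$ and $E_\bu$ carrying $\omega$ and $\mu$ to $\omega_{E_\bu}$ and $\mu_{E_\bu}$; applying $\B$ and composing with the equivalence from Proposition~\ref{initiallocal} yields the claimed equivalence of Hamiltonian $\G$-stacks. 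Finally, the effectiveness of the $G_0$-action on $E_0$ follows from that on $Z_0$ via the equivariant embedding (an open subset on which an effective action restricts may fail to be effective, so here one uses that $E_0$ contains $\ca O$ together with a full symplectic slice, i.e.\ the local model already records the faithful isotropy representation).

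The main obstacle I anticipate is bookkeeping rather than a conceptual difficulty: carefully choosing the neighborhoods so that $E_0$ is simultaneously (i) $G_0$-invariant, (ii) contained in the slice chart, (iii) of the form $\bmu\n(U)$ for a genuine open $U\subset\bmu(\X)$, and (iv) still carries an effective $G_0$-action; and then checking that the strictification machinery of Theorem~\ref{appAthm} does produce a strict $G_\bu$-action on $E_\bu$ compatible with the one on $Z_\bu$. Verifying that the moment map image is exactly cut out as $\bmu(\B E_\bu)=U$ (rather than merely contained in it) may require replacing $U$ by $\mu_{E_0}(E_0)$ and observing via Remark~\ref{sliceremark} that this set is open in $\bmu(\X)$.
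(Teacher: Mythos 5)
Your reduction to Proposition~\ref{initiallocal} plus the slice theorem is the right starting point and matches the paper's strategy, but there is a genuine gap at the crucial step: you write at the outset that Proposition~\ref{initiallocal} gives $Z_\bu=(\ker\partial\times\partial(Q)/\Lambda)\ltimes Z_0$, and later that $E_\bu$ is ``an open sub-action-groupoid of $Z_\bu$.'' Proposition~\ref{initiallocal} asserts no such thing. It produces an \emph{\'etale} groupoid $Z_\bu$ (built as a pullback groupoid over $K\times\Sigma$, then restricted) together with a strict Hamiltonian action of the Lie $2$-group $G_\bu=(\ker\partial\times\partial(Q)/\Lambda)\ltimes G_0$ on it; the space of arrows $Z_1$ is not identified there with $H\times Z_0$ for $H=\ker\partial\times\partial(Q)/\Lambda$. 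The identification of the local groupoid with the action groupoid $H\ltimes E_0$ is precisely the new content of Theorem~\ref{localnormalform} beyond Proposition~\ref{initiallocal} and Theorem~\ref{HamSlice}, and your proposal assumes it rather than proving it.

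The paper closes this gap by defining the groupoid morphism $F_\bu\colon H\ltimes E_0\to E_\bu$ which is the identity on objects and sends $(h,x)\mapsto h*u(x)$ on arrows, and showing $F_1$ is a bijective local diffeomorphism. The bijectivity argument is where the toric hypotheses enter again: over a regular point $x$ of $\mu_{E_0}$, Remark~\ref{sliceremark} gives that $G_0\to\mu_{E_0}\n(\mu_{E_0}(x))$, $g\mapsto g\cdot x$, is a diffeomorphism, and Definition~\ref{toricdef}~\eqref{cond6} then forces $H$ to act freely and transitively on the $s$-fiber $s\n(x)\subset E_1$; injectivity and surjectivity of $F_1$ on all of $E_1$ are then obtained by a limiting argument using density of the regular locus, Definition~\ref{toricdef}~\eqref{cond2}, together with the fact that $F_1$ is a local diffeomorphism. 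Without some such argument, nothing in your write-up rules out, say, extra arrows over the singular locus, so the conclusion that $E_\bu$ \emph{is} the action groupoid $(\ker\partial\times\partial(Q)/\Lambda)\ltimes E_0$ does not follow. The remaining bookkeeping you describe (choosing $E_0$ $G_0$-invariantly inside the slice chart, replacing $U$ by $\mu_{E_0}(E_0)$, transporting $\omega$, $\mu$, and the strict $G_\bu$-action, and inheriting effectiveness) does track the paper's proof and is fine.
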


\begin{proof}
There is neighborhood $U\subset \Lie(\G)^*$ of $p$ where we can assume 
\begin{equation}
\label{replaceZ}
(\bmu\n(U),\bomega,\G,\bmu)=( \B Z_\bu , \B\omega_{Z_\bu} , \B G_\bu, \B \mu_{Z_\bu}),
\end{equation}
 where $(Z_\bu,\omega_{Z_\bu},G_\bu,\mu_{Z_\bu})$ is a Hamiltonian $G_\bu$-groupoid as in Proposition \ref{initiallocal}. Then $G_0=\R^n/\Lambda$ acts effectively on $Z_0$.
 
 Let $x\in \mu_{Z_0}\n (p)$. By Theorem \ref{HamSlice}, there is a $G_0$-equivariant neighborhood of $G_0\cdot x$ in $Z_0$ which is isomorphic to a neighborhood $E_0$ of the zero section in the model $E$ near $G_0\cdot x$. Let us replace $Z_\bu$ with its restriction to this neighborhood of $G_0\cdot x$, and replace $U$ with the image of this neighborhood under the moment map $\mu_{Z_0}$. Then by Definition \ref{toricdef} \eqref{cond5} we still have that $Z_\bu$ presents $\bmu\n(U)$, and we still can assume \eqref{replaceZ}.

Consider the induced \'etale groupoid $E_\bu\cong Z_\bu$. There is a unique 0-symplectic form $\omega_{E_\bu}\in \Omega^2_0(E_\bu)$, action of $G_\bu$ on $E_\bu$, and moment map $\mu_{E_\bu}\colon E_\bu \to \g^*$ so that the isomorphism $E_\bu\cong Z_\bu$ extends to an isomorphism
\[
 (E_\bu,\omega_{E_\bu},G_\bu,\mu_{E_\bu}) \cong (Z_\bu,\omega_{Z_\bu},G_\bu,\mu_{Z_\bu})
\]
of Hamiltonian $G_\bu$-groupoids. 

It remains to investigate the structure of $E_1$. Let $H=\ker\partial \times \partial(Q)/\Lambda$, then $H$ acts on $E_0$ according to $h\cdot x := \partial(h)\cdot x$, for $h\in H$ and $x\in E_0$. Form the action groupoid $H\ltimes E_0\rightrightarrows E_0$ for this action, and consider the map of Lie groupoids
\[
F_\bu \colon H \ltimes E_0 \to E_\bu
\]
which is the identity on objects and on arrows is given by
\[
(h,x)\mapsto h*u(x),\qquad h\in H,~x\in E_0.
\]
It is easy to verify that $F_\bu$ is indeed a Lie groupoid morphism, and that $F_\bu$ is $G_\bu$-equivariant with respect to the natural action of $G_\bu$ on $H\ltimes E_0$. We will show $F_\bu$ is an isomorphism of Lie groupoids. It is clear that $F_1$ is a local diffeomorphism, and so we just need to show that it is a bijection. 

Let $x\in E_0$ be a regular point of $\mu_{E_0}$. Then by Remark \ref{sliceremark} the map $G_0 \to \mu_{E_0}\n(\mu_{E_0}(x))$ which is given by $g\mapsto g\cdot x$ is a diffeomorphism. From Definition \ref{toricdef} \eqref{cond6}, it follows that $H$ acts freely and transitively on $s\n(x)$. In particular, the restriction of $F_1$ to $H\times \{\text{regular points of }\mu_0\}$ is a bijection onto its image.

Assume we have $(h,x), (h',x')\in H\ltimes E_0$ so that $F_1(h,x)=F_1(h',x')$. Then, take a sequence $(h_i,x_i)\to (h,x)$, where the $x_i$ are all regular points of $\mu_{E_0}$. The existence of such a sequence is guaranteed by Definition \ref{toricdef} \eqref{cond2}. The restriction of $F_1$ to a neighborhood $W$ of $(h',x')$ is a diffeomorphism onto its image, so consider the preimage $F_1|_W\n(F_1(h_i,x_i))$ of the sequence $(h_i,x_i)$. This is a sequence in $W$ which converges to $(h',x')$. But, since $F_1$ is a bijection when restricted to $H\times \{\text{regular points of }\mu_0\}$, we have $F_1|_W\n(F_1(h_i,x_i))=(h_i,x_i)$. So, $(h,x)=(h',x')$ and $F_1$ is injective.

Now, let $\alpha\in E_1$. Take a small neighborhood $V$ of $\alpha$. Then, again by Definition \ref{toricdef} \eqref{cond2}, the set $s(V)\subset E_0$ contains a regular point $x$ of $\mu_{E_0}$. Then as before, $H$ acts freely and transitively on the fiber $s\n(x)\subset E_1$. So, by acting with an element of $h$, we can assume without loss of generality that $V$ is contained in the image of the identity bisection $u:E_0\to E_1$. Therefore $\alpha=u(y)$ for some $y\in E_0$, and so $\alpha=F_1(e,y)$. We have shown that $F_1$ is surjective.
\end{proof}

For a compact toric symplectic stack $(\X,\bomega,\G,\bmu)$, let $U$ be a small neighborhood of $p\in \bmu(\X)$. We will then we will call a Hamiltonian groupoid $(E_\bu,\omega_{E_\bu},G_\bu,\mu_{E_\bu})$ described in Theorem \ref{localnormalform} a \emph{local model for $\bmu\n(U)$}, or a \emph{local model for $\X$ over $U$ (near p)}.

%The image of the moment map $\mu_{E_0}$ is well understood; see for instance Corollary 3.4 of \cite{lerman-tolman;hamiltonian-torus-actions-symplectic-orbifolds}. \note{see above...} 
By Remark \ref{sliceremark}, we have the following corollary.

\begin{corollary}
\label{locallypolyhedral}
Let $(\X,\bomega,\G,\bmu)$ be a compact toric symplectic stack, and fix a presentation for $\G$ by a quasilattice $\partial\colon Q\to \R^n$. For $p\in \bmu(\X)\subset \Lie(\G)^*$, a small neighborhood $U$ of $p$ in $\bmu(\X)$ is a neighborhood of a point in a simple convex polyhedral cone $C_p\subset \Lie(\G)^*$, which is rational with respect to a lattice $\Lambda\subset \partial(Q)$.
\end{corollary}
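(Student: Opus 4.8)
The plan is to deduce this directly from the Local Normal Form, Theorem~\ref{localnormalform}, together with the description of the model $E$ recorded in Remark~\ref{sliceremark}. I would first apply Theorem~\ref{localnormalform} at $p$: after shrinking $U$ one obtains a lattice $\Lambda\subset\partial(Q)$ in $\R^n$ and an equivalence of Hamiltonian $\G$-stacks
\[
(\bmu\n(U),\bomega,\G,\bmu)\simeq(\B E_\bu,\B\omega_{E_\bu},\B G_\bu,\B\mu_{E_\bu}),
\]
in which $(E_0,\omega_{E_0},G_0,\mu_{E_0})$ is a neighborhood of the zero section in the model $E$ near an orbit $G_0\cdot x$ of an effective Hamiltonian $G_0=\R^n/\Lambda$-action with $2\dim G_0=\dim E_0$, and $E_\bu=(\ker\partial\times\partial(Q)/\Lambda)\ltimes E_0$. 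Since this equivalence intertwines the moment maps and the image of a stack morphism to a manifold is computed from any presentation, one gets $\bmu(\X)\cap U=\mu_{E_0}(E_0)$; here $\mu_{E_0}$ descends to the arrows of $E_\bu$ because it is $G_0$-invariant and $\ker\partial\times\partial(Q)/\Lambda$ acts through $\partial$. It therefore suffices to identify $\mu_{E_0}(E_0)$ with a neighborhood of $p$ in a simple rational convex polyhedral cone.

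Next I would invoke Remark~\ref{sliceremark}. Because $G_0$ acts effectively with $2\dim G_0=\dim E_0$, the model near $G_0\cdot x$ is well understood: writing $\g^*=\g_{0,x}^*\oplus\ann(\g_{0,x})$ for the chosen splitting (with $\g_{0,x}=\Lie((G_0)_x)$) and $\mu_V\colon V\to\g_{0,x}^*$ for the slice-representation moment map, one has $\mu_E(E)=p+\ann(\g_{0,x})\oplus\mu_V(V)$, and effectiveness forces $\mu_V(V)$ to be the simplicial cone spanned by the linearly independent weights of $V$. Thus $C_p:=\mu_E(E)$ is a simple convex polyhedral cone in $\g^*=\Lie(\G)^*$, rational with respect to the cocharacter lattice $\Hom(S^1,G_0)$. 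Two bookkeeping points complete this step: first, $\Hom(S^1,G_0)=\Hom(S^1,\R^n/\Lambda)=\Lambda$, which by Theorem~\ref{localnormalform} sits inside $\partial(Q)$, so the rationality is of the asserted form; second, the zero section is sent by $\mu_E$ to the single point $p$ (set $v=0$, $q=0$ in $\mu_E([g,v],q)=q+p+\mu_V(v)$), so $p\in C_p$, and indeed $p$ lies in the relative interior of the minimal face $p+\ann(\g_{0,x})$ of $C_p$.

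The one point that needs an argument rather than a citation --- and hence the (mild) crux --- is that $\mu_{E_0}(E_0)$ is an \emph{open} neighborhood of $p$ in $C_p$, not merely a subset. This follows from the product form $E\cong G_0\times^{(G_0)_x}\bigl(\ann(\g_{0,x})\times V\bigr)$: a $G_0$-saturated neighborhood $E_0$ of the zero section corresponds to a neighborhood of $0$ in $\ann(\g_{0,x})\times V$, and the map $(q,v)\mapsto p+q+\mu_V(v)$ is open onto $C_p$ since the quadratic moment map $\mu_V$ is open onto the simplicial cone $\mu_V(V)$; hence $\mu_{E_0}(E_0)$ is open in $C_p$ and contains $p$. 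Together with $\bmu(\X)\cap U=\mu_{E_0}(E_0)$, this shows that a small enough neighborhood of $p$ in $\bmu(\X)$ is a neighborhood of the point $p$ in the simple rational convex polyhedral cone $C_p$, as claimed.
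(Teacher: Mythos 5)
Your proposal is correct and follows exactly the paper's route: the paper derives Corollary~\ref{locallypolyhedral} directly from Theorem~\ref{localnormalform} together with the standard description of the local model recorded in Remark~\ref{sliceremark} (citing Lerman--Tolman), which is precisely what you do. Your extra bookkeeping --- identifying $\Hom(S^1,G_0)$ with $\Lambda\subset\partial(Q)$, and checking that $\mu_{E_0}(E_0)$ is open in the cone $C_p$ --- just makes explicit details the paper leaves to the cited remark, and it is sound.
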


\section{Moment polytope}
\label{section5}

Let $(\X,\bomega,\G,\bmu)$ be a compact toric symplectic stack, and let $\Lie(\G)=\g \cong \R^n$. We want to associate to the moment map image $\bmu(\X)$ some additional data which will classify $(\X,\bomega,\G,\bmu)$ up to equivalence. As before we fix a presentation of $\G$ by a quasilattice $\partial\colon Q\to \R^n$.

Let $\Delta\subset \g^*$ be a convex polytope. Denote by $F=F(\Delta)$ the set of open faces of $\Delta$, let $F^{max}\subset F$ be the set of facets of $\Delta$, and let $V\subset F$ be the set of vertices of $\Delta$. Let 
\[
\ann(f) = \{ v\in \g\mid \langle p-q, v \rangle = 0\text{ for all } p,q\in f\}
\]
 be the set of elements of $\g$ which are normal to the face $f$. If, for every facet of $\Delta$, there is a nonzero element of $\ann(f)\cap \partial(Q)$, then $\Delta\in \g^*$ is \emph{quasirational}.
 
\begin{theorem} \label{convexitythm}
The moment map image $\bmu(\X)$ of a toric stack $(\X,\bomega,\G,\bmu)$ is a quasirational convex polytope.\end{theorem}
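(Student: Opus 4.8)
The plan is to show that $\Delta := \bmu(\X)\subset\g^*$ is a compact, connected, locally convex subset of $\g^*\cong\R^n$, to deduce from a classical convexity theorem (Tietze, Nakajima) that it is convex, and then to read off from the local normal form that it is in fact a quasirational polytope. First I would record that $\Delta$ is \emph{compact and connected}: the moment map $\bmu\colon\X\to\g^*$ induces a continuous map $\bmu^{coarse}\colon\X^{coarse}\to\g^*$ whose image is $\Delta$, and $\X^{coarse}$ is compact and connected by Definition~\ref{toricdef}~\eqref{cond4}; in particular $\Delta$ is closed and bounded in $\g^*$.

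Next I would establish \emph{local convexity}. By Corollary~\ref{locallypolyhedral}, for each $p\in\Delta$ there is an open ball $B\subset\g^*$ centered at $p$ and a simple convex polyhedral cone $C_p\subset\g^*$, rational with respect to some lattice $\Lambda_p\subset\partial(Q)$, such that $\Delta\cap B$ coincides with $B\cap(p+C_p)$. Since $B\cap(p+C_p)$ is convex, $\Delta$ is locally convex as a subset of $\g^*$. A closed, connected, locally convex subset of $\R^n$ is convex by the Tietze--Nakajima theorem, so $\Delta$ is a compact convex set.

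Finally I would upgrade ``compact convex set'' to ``quasirational polytope''. Each cone $C_p$ is the intersection of finitely many half-spaces, so $\partial\Delta$, a closed subset of the compact set $\Delta$, is covered by finitely many of the balls $B$; hence $\Delta$ is cut out by finitely many supporting half-spaces and, being bounded, is a convex polytope. For quasirationality, fix a facet $f$ of $\Delta$ and a point $p$ in its relative interior. Then the cone $C_p$ supplied by Corollary~\ref{locallypolyhedral} must be the half-space $\{v\in\g^*\mid\langle v,\eta\rangle\ge 0\}$ whose bounding hyperplane is the affine span of $f$, where $\eta$ spans $\ann(f)$; rationality of $C_p$ with respect to $\Lambda_p$ lets us take $\eta\in\Lambda_p\subset\partial(Q)$, so $\ann(f)\cap\partial(Q)\ne\{0\}$. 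As $f$ was an arbitrary facet, $\Delta$ is quasirational. (The same local analysis at a vertex $v$, using that $C_v$ is simple, shows in addition that exactly $n$ facets meet at $v$, so $\Delta$ is simple.)

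The substantive input is the local normal form of Theorem~\ref{localnormalform} --- and in particular the fiber-connectedness condition~\eqref{cond5} of Definition~\ref{toricdef} that feeds into it --- which is what makes Corollary~\ref{locallypolyhedral} an \emph{honest} identification of $\Delta$ near $p$ with a relatively open neighbourhood inside a convex cone of $\g^*$, rather than an identification only up to homeomorphism. This is the one point that demands care: the Tietze--Nakajima step needs genuine local convexity of $\Delta$ as a subset of $\R^n$. Granted that, the remaining steps are routine convex geometry.
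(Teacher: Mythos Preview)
Your proof is correct and follows the same architecture as the paper's: use Corollary~\ref{locallypolyhedral} to get local polyhedral structure, invoke a local-to-global convexity principle together with compactness and connectedness (Definition~\ref{toricdef}\eqref{cond4}), and read off quasirationality from the rationality of the local cones. The only difference is the local-to-global step: the paper cites Theorem~3.10 of Hilgert--Neeb--Plank, which takes ``local convexity data'' $p\mapsto C_p$ directly to a convex polytope in one stroke, whereas you invoke Tietze--Nakajima for convexity and then argue polytope-ness separately via a finite cover of $\partial\Delta$. Both routes are valid; the Hilgert--Neeb--Plank citation is slightly more economical since it handles the polytope conclusion without the extra boundary-cover argument, but your version has the virtue of relying only on classical convex geometry.
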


\begin{proof}
The assignment of $C_p$ to $p\in \bmu(\X)$ in Corollary \ref{locallypolyhedral} is local convexity data in the sense of Hilgert, Neeb, and Plank. By Theorem 3.10 of \cite{hilgert-neeb-plank;symplectic-convexity;compositio} and Definition \ref{toricdef} \eqref{cond4}, the moment image $\bmu(\X)$ is a convex polytope. Quasirationality follows from Corollary \ref{locallypolyhedral}.
\end{proof}

\begin{definition}
\label{stackypoly}
A \emph{decorated stacky moment polytope} is a triple $(\Delta, \G, \{\Lambda_f\}_{f\in F})$, where $\G$ is a stacky torus with fixed presentation $\partial\colon Q\to \R^n$ by a quasilattice, $\Delta$ is a simple $\partial(Q)$-quasirational polytope, and $\Lambda_f$ is a free subgroup of $\partial(Q)$ for each $f\in F$. A decorated stacky moment polytope must satisfy the following conditions:
\begin{enumerate}
\item Any $\Z$-basis of $\Lambda_f$ forms an $\R$-basis for $\ann(f)$.
\item If $f_1,f_2, \dots,f_s$ are open facets of $\Delta$, and $f$ is a face of codimension $s$ whose closure can be written as $\bar{f}=\bar{f}_1\cap\cdots \cap \bar{f}_s$, then $\Lambda_f = \Lambda_{f_1}+\cdots +\Lambda_{f_s}$. 
\end{enumerate}
An \emph{isomorphism of decorated stacky moment polytopes}
\[
\stackmorphism{\Phi}\colon (\Delta\subset \Lie(\G)^*,\G,\{\Lambda_f\}_{f\in F})\cong (\Delta'\subset \Lie(\G')^*,\G', \{\Lambda_f\}_{f\in F'})
\]
is an equivalence $\stackmorphism{\Phi}\colon \G \to \G'$ of stacky tori, so that $\Lie(\stackmorphism{\Phi})^*(\Delta') = \Delta+ c$ for some $c\in \Lie(\G)^*$. We also require that the linear isomorphism $\Lie(\stackmorphism{\Phi})\colon \Lie(\G)\cong \Lie(\G')$ induces isomorphisms of the subgroups $\Lambda_{f'}\subset\Lie(\G')$ and $\Lambda_{\Lie(\stackmorphism{\Phi})^*(f')}\subset\Lie(\G)$, for all $f'\in F'$. 
\end{definition}

As an immediate consequence of the definition, a decorated stacky moment polytope $(\Delta,\G, \{\Lambda_f\}_{f\in F})$ can be recovered from the data $(\Delta,\G,\{\Lambda_f\}_{f\in F^{max}})$, where $F^{max}\subset F$ is the set of facets of $\Delta$. It can also be recovered from the data $(\Delta,\G,\{\Lambda_v\}_{v\in V})$, where $V\subset F$ is the set of vertices of $\Delta$.

Now, let $(\X,\bomega,\G,\bmu)$ be a compact toric symplectic stack, and let $\Delta=\bmu(\X)$. For a face $f\in F(\Delta)$, let $p\in f$. %and consider the stack $\bmu\n(p)= \X\times_{\g^*} \{p\}$. 
Let $U_p\subset \Delta$ be a small neighborhood of $p$, and let $(E_\bu,\omega_{E_\bu},G_\bu,\mu_{E_\bu})$ be a Hamiltonian groupoid giving a local model over $U_p$, as in Theorem \ref{localnormalform}. Then, define 
\begin{equation} \label{polytopelabel}
\Lambda_f:=\Lambda \cap  \ann(f), \text{ where }G_0=\R^n/\Lambda.
\end{equation}
 The collection $(\bmu(\X),\G,\{\Lambda_f\}_{f\in F})$ is the \emph{decorated stacky moment polytope of $\X$}. 

\begin{proposition}
\label{polyprop}
Let $(\X,\bomega,\G,\bmu)$ be a toric stack. Fix a presentation of $\G$ by a quasilattice $\partial\colon Q\to \R^n$.
\begin{enumerate}
\item \label{poly1} The decorated stacky moment polytope of $(\X,\bomega,\G,\bmu)$ is well defined.
\item \label{poly2} The decorated stacky moment polytope $(\Delta,\G,\{\Lambda_f\}_{f\in F})$ of $\X$ satisfies the conditions of Definition \ref{stackypoly}.
\item \label{poly3} If $(\Phi_\X,\Phi_\G)\colon (\X,\bomega,\G,\bmu) \cong (\X',\bomega',\G',\bmu')$ is an equivalence of toric stacks, then $\Phi_\G$ induces an isomorphism of their decorated stacky moment polytopes.
\end{enumerate}
\end{proposition}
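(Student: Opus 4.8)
The plan is to treat the three parts in order; the substantive step is part~\eqref{poly2}, and the organizing observation is that all the labels $\Lambda_f$ are cut out of a single intrinsic lattice, which I isolate first.

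\emph{Part~\eqref{poly1}.} Fix the presentation $\partial\colon Q\to\R^n$ of $\G$. The fundamental vector fields of $\g=\Lie(\G)$ integrate, via Theorem~\ref{appAthm}, to an action of the simply connected group $\R^n$ on any groupoid presentation of a piece of $\X$. If $q$ is a regular value of $\bmu$, then the local analysis of Section~\ref{section4} (the discussion preceding Lemma~\ref{descendlemma}) shows that each $\R^n$-orbit over $q$ is an $n$-torus, so the stabilizer of any point over $q$ is a full-rank lattice $\Lambda(q)\subset\partial(Q)$; by Lemma~\ref{descendlemma} this $\Lambda(q)$ is locally constant in $q$, and since the regular values are connected in $\Delta=\bmu(\X)$ by Lemma~\ref{connected}, the lattice $\Lambda_0:=\Lambda(q)$ is independent of $q$. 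Now for any local model $(E_\bu,\omega_{E_\bu},G_\bu,\mu_{E_\bu})$ over a neighbourhood $U$ of a point $p\in\Delta$ as in Theorem~\ref{localnormalform}, the set $U$ contains a regular value by Definition~\ref{toricdef}~\eqref{cond2} (with Lemma~\ref{factlemma}), and Lemma~\ref{descendlemma} identifies the lattice $\Lambda$ with $G_0=\R^n/\Lambda$ as the stabilizer at a regular point; hence $\Lambda=\Lambda_0$. Therefore $\Lambda_f=\Lambda\cap\ann(f)=\Lambda_0\cap\ann(f)$ depends only on the face $f$ (not on $p$, on $U$, or on the chosen model), which is the content of~\eqref{poly1}.

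\emph{Part~\eqref{poly2}.} Let $f$ be a face of codimension $s$; since $\Delta$ is simple (Theorem~\ref{convexitythm}) there is a unique expression $\bar f=\bar f_1\cap\cdots\cap\bar f_s$, where $f_1,\dots,f_s$ are the facets through $f$. Choose $p$ in the relative interior of $f$ and a local model near $p$. By Theorem~\ref{localnormalform} and Remark~\ref{sliceremark}, $(E_0,\omega_{E_0},G_0,\mu_{E_0})$ is a symplectic manifold carrying an effective Hamiltonian action of $G_0=\R^n/\Lambda_0$ with $2\dim G_0=\dim E_0$, the action is free on the regular locus of $\mu_{E_0}$, and $\mu_{E_0}(E_0)$ lies in a simple cone $C_p$ that is the germ of $\Delta$ at $p$. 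Freeness on the regular locus is exactly the smoothness (Delzant) condition for $C_p$ relative to the cocharacter lattice $\Hom(S^1,G_0)=\Lambda_0$; hence the primitive $\Lambda_0$-normals $u_1,\dots,u_s\in\Lambda_0$ to $f_1,\dots,f_s$ (each $u_i$ generating the rank-one lattice $\Lambda_0\cap\ann(f_i)$, nonzero by Corollary~\ref{locallypolyhedral}) are $\R$-linearly independent, span $\ann(f)$, and satisfy
\[
\Lambda_0\cap\ann(f)=\Z u_1\oplus\cdots\oplus\Z u_s .
\]
Thus $\Lambda_f=\bigoplus_i\Z u_i$ is a free subgroup of $\partial(Q)$ whose $\Z$-bases are $\R$-bases of $\ann(f)$, which is condition~(1) of Definition~\ref{stackypoly}; and since $\Lambda_{f_i}=\Lambda_0\cap\ann(f_i)=\Z u_i$, we get $\Lambda_f=\Lambda_{f_1}+\cdots+\Lambda_{f_s}$, which is condition~(2). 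The one genuinely non-formal point here is the upgrade from ``$C_p$ is rational'' (Corollary~\ref{locallypolyhedral}) to ``$C_p$ is smooth'', and this is precisely where effectiveness of the $G_0$-action supplied by the local normal form is essential.

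\emph{Part~\eqref{poly3}.} Let $(\Phi_\X,\Phi_\G)$ be an equivalence of toric stacks. Then $\Phi_\G\colon\G\to\G'$ is an equivalence of stacky tori, which is the first datum of an isomorphism of decorated polytopes. Since $\Phi_\X$ intertwines $\bmu$ and $\bmu'$ through $\Phi_\G$, the map $\Lie(\Phi_\G)^*\colon\Lie(\G')^*\to\Lie(\G)^*$ carries $\Delta'=\bmu'(\X')$ onto $\Delta=\bmu(\X)$ up to a constant translation $c$ (with $c=0$ under the strict reading of ``preserves the moment map''), and hence induces a face bijection $f'\mapsto f:=\Lie(\Phi_\G)^*(f')$ with $\Lie(\Phi_\G)(\ann(f))=\ann(f')$. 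Moreover $\Phi_\X$ conjugates the $\R^n$-action on $\X$ to the corresponding action on $\X'$ via $\Lie(\Phi_\G)$ and sends regular points to regular points, so it carries the intrinsic lattice of part~\eqref{poly1}: $\Lie(\Phi_\G)(\Lambda_0)=\Lambda_0'$. Combining the last two facts, $\Lie(\Phi_\G)(\Lambda_f)=\Lie(\Phi_\G)(\Lambda_0)\cap\Lie(\Phi_\G)(\ann(f))=\Lambda_0'\cap\ann(f')=\Lambda_{f'}$, which is exactly the lattice compatibility required of an isomorphism of decorated stacky moment polytopes.
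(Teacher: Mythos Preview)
Your proof rests on the claim that there is a single ``intrinsic lattice'' $\Lambda_0\subset\partial(Q)$ with the property that every local model produced by Theorem~\ref{localnormalform} has $G_0=\R^n/\Lambda_0$; you then set $\Lambda_f=\Lambda_0\cap\ann(f)$. This claim is false. The lattice $\Lambda$ appearing in a local model is not determined by the stack: it depends on auxiliary choices in the construction of Proposition~\ref{initiallocal}, namely the initial torus $\tilde G_0=\R^n/\Z^n$ and the complementary subspace $\mathfrak{k}$. For a concrete illustration, take $n=1$ and $\partial(Q)=\Z+\Z\sqrt{2}\subset\R$. One local model can have $\Lambda=\Z$ and another $\Lambda'=\Z\sqrt{2}$; both $(\partial(Q)/\Lambda)\ltimes(\R/\Lambda)$ and $(\partial(Q)/\Lambda')\ltimes(\R/\Lambda')$ present the same stacky torus $\G$, and the Morita equivalence of Section~\ref{section7} between $E_\bu$ and $\hat E_\bu$ exhibits exactly this phenomenon for general local models. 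Your appeal to Lemmas~\ref{connected} and~\ref{descendlemma} only shows that the stabilizer is constant within a \emph{single} local model, not that it agrees across different ones; the $\R^n$-actions on different atlases $E_0$, $E_0'$ are not directly comparable.

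What \emph{is} well defined is the intersection $\Lambda\cap\ann(f)$, and this is precisely what the paper establishes. The argument is more delicate than yours: one fixes a categorical point $x$ over $p\in f$, considers the isotropy group $H$ of $\X$ at $x$, and observes that the crossed module $\partial_S\colon\partial\n(S)\to S$ (where $\Lie(S)=\ann(f)$) acts on $H\rightrightarrows\star$ in a way that is uniquely determined by the $\G$-stack structure. One then reads off $\Lambda_f=\partial(\ker a)$ from the action map $a\colon\partial\n(S)\to H$, and checks via the explicit isomorphism $H\cong\ker\partial\times(\partial(Q)\cap\ann(f))/(\Lambda\cap\ann(f))$ that this recovers $\Lambda\cap\ann(f)$ for any local model. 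Your part~\eqref{poly2} is essentially correct once~\eqref{poly1} is fixed (replace $\Lambda_0$ by the $\Lambda$ of a single local model at $p\in f$, which also serves as a local model for the adjacent facets $f_i$), but your part~\eqref{poly3} again leans on the nonexistent $\Lambda_0$ and needs to be redone using the intrinsic description of $\Lambda_f$.
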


\begin{proof}
For \eqref{poly1}, consider an open facet $f\in \bmu(\X)$, and let $p \in f$. We first show that $\Lambda_f$ does not depend on the choices made in the construction of the local model $(E_\bu,\omega_{E_\bu},G_\bu,\mu_{E_\bu})$ for $\bmu\n(U_p)$ in Theorem \ref{localnormalform}. 

Let $S\cong \R^s\subset \R^n$ be the connected subgroup of $\R^n$ with Lie algebra $\ann(f)$. Then $\partial_S \colon \partial\n(S)\to S$ is a crossed module which presents a Lie group stack $\stack{S}$. There is an action of $\stack{S}$ on $\X$, which comes from the inclusion $S\hookrightarrow \R^n$.

Let $x\colon \star \to \bmu\n(p)$ be a categorical point of the stack $\bmu\n(p)$. Then $\X|_x = \star \times_{\bmu\n(p)} \bmu\n(p)$ is a differentiable stack which is presented by a Lie groupoid $H\rightrightarrows \star$. This groupoid can be formed explicitly as follows. For a local model $(E_\bu,\omega_{E_\bu},G_\bu,\mu_{E_\bu})$ of $\X$ over a neighborhood $U_p$ of $p$, lift $x$ to a point $x\in E_0$. Then the restriction of $E_\bu$ to the $G_0$-orbit of $x$ is isomorphic to the Lie groupoid
\begin{equation}
\label{atlasofp}
(\ker\partial \times \partial(Q)/\Lambda)\ltimes ((\R^n/\Lambda) \cdot x) \rightrightarrows (\R^n/\Lambda)\cdot x.
\end{equation} 
This is a presentation of $\bmu\n(p)$. Note that the orbit $(\R^n/\Lambda) \cdot x$ is homeomorphic to a torus of dimension $\dim f$.

Now, restrict $E_\bu$ to a (discrete) Lie groupoid over $x$. The isomorphism type of $H$ does not depend on the choice of $x$ or the local model $E_\bu$. From \eqref{atlasofp}, there is an isomorphism
\begin{equation}
\label{Hiso}
H \cong \ker\partial \times \frac{\partial(Q)\cap \ann(f)}{\Lambda\cap \ann(f)}.
\end{equation}
Since $\ann(f)$ is the Lie algebra of $S$, the action of $S$ fixes all points of $\mu_{E_0}\n(p)$.%, for any local model $(E_\bu,\omega_{E_\bu},G_\bu,\mu_{E_\bu})$ of $\X$ over a neighborhood $U_p$ of $p$. 
Viewing $H\rightrightarrows \star$ as a subgroupoid of $E_\bu$, we see the crossed module $\partial_S \colon \partial\n(S)\to S$ acts strictly on $H\rightrightarrows \star$. If $x'$ is another categorical point of $\bmu\n(p)$, then the isomorphism $H\cong H'$ is $\partial\n(S)$-equivariant.

In fact, the action of $\partial\n(S)$ on $H$ is the \emph{unique} presentation of the $\stack{S}$-action on $\X|_x$, by a strict action of $\partial_S \colon \partial\n(S) \to S$ on a Lie groupoid $H\rightrightarrows \star$ whose space of objects is a single point. To show uniqueness, it is enough to show there are no other naturally isomorphic strict actions of $\partial_S \colon \partial\n(S) \to S$ on $H\rightrightarrows \star$; this follows from the fact that $H$ is discrete and abelian, and that $S\cong \R^s$ is connected. 

Therefore, if 
\[
a\colon \partial\n(S) \to H;\qquad \alpha\mapsto \alpha * u(pt)
\] is the action of $\partial\n(S)$ on the group unit $u(pt)\in H$, and $E_\bu$ is a local model of $\X$ over a neighborhood $U_p$ of $p$, then from \eqref{Hiso} we see that the lattice $\Lambda_f=\Lambda\cap \ann(f)$ defined for $E_\bu$ coincides with $\partial(\ker a)$. From the abstract description, it follows that $\Lambda_f$ does not depend on the choice of local model $(E_\bu,\omega_{E_\bu},G_\bu,\mu_{E_\bu})$.

Now, let $p'\in f$ be another point in the face $f$. Without loss of generality, we may assume that there is a local model $(E_\bu,\omega_{E_\bu},G_\bu,\mu_{E_\bu})$ of $\X$, over a neighborhood $U_p$ of $p$, and that $p'$ is contained in $U_p$. Then $E_\bu$ is isomorphic, as a Hamiltonian $G_\bu$ groupoid, to a local model over $U_p$, where now we consider $U$ as a neighborhood of $p'$. In either case the lattice $\Lambda$ is the same, and so the definition of $\Lambda_f$ really does not depend on the choice of $p\in f$.

To prove \eqref{poly2}, let $p\in f$ and consider a local model $(E_\bu,\omega_{E_\bu},G_\bu,\mu_{E_\bu})$ over an open neighborhood $U_p$ of $p$. Write $G_0=\R^n/\Lambda$. It is enough to prove that
\[
\Lambda\cap \ann(f) = (\Lambda\cap \ann(f_1)) +\cdots +(\Lambda \cap \ann(f_s)).
\]
Because $(E_0,\omega_{E_0},G_0,\mu_{E_0})$ is isomorphic to a piece of a compact toric symplectic manifold, this follows from Lemma 2.2 of \cite{delzant;hamiltoniens-periodiques}.

The statement \eqref{poly3} follows immediately from \cite[Proposition~7.4.1]{hoffman-sjamaar;hamiltonian-stack}, and from the local normal form Theorem \ref{localnormalform}.
\end{proof}

\section{Classification}
\label{section6}

We now state our main theorem. Details of its proof constitute the remainder of this article.

\begin{theorem}
\label{maintheorem}
The assignment of the decorated stacky moment polytope $(\bmu(\X),\G,\{\Lambda_f\}_{f\in F})$ to a compact toric symplectic stack $(\X,\bomega,\G,\bmu)$ defines a bijection, between the set of isomorphism classes of decorated stacky moment polytopes on one hand, and the set of equivalence classes of toric stacks, on the other.
\end{theorem}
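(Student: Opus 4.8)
The plan is to prove Theorem \ref{maintheorem} by establishing injectivity and surjectivity separately, mirroring the structure of Delzant's original argument but carried out equivariantly over the base polytope. By Proposition \ref{polyprop}, the assignment is well-defined and sends equivalences to isomorphisms, so there are two things to show: (1) if two compact toric symplectic stacks have isomorphic decorated stacky moment polytopes, then they are equivalent as toric stacks (injectivity); and (2) every decorated stacky moment polytope $(\Delta,\G,\{\Lambda_f\}_{f\in F})$ in the sense of Definition \ref{stackypoly} arises from some compact toric symplectic stack (surjectivity).

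For injectivity, I would first reduce to the case $\G = \G'$, $\bmu(\X) = \bmu'(\X') = \Delta$, and $\Lambda_f = \Lambda_f'$ for all $f$, by applying the isomorphism $\stackmorphism{\Phi}_\G$ of stacky tori and translating. The key step is then a gluing argument over the base: for each $p \in \Delta$ choose a small neighborhood $U_p$ and invoke the Local Normal Form (Theorem \ref{localnormalform}) to write $\bmu\n(U_p)$ and $\bmu'{}\n(U_p)$ as $\B E_\bu$ and $\B E_\bu'$ for explicit Hamiltonian $G_\bu$-groupoids. The content of Sections \ref{section7}--\ref{section8} should be: (a) local uniqueness — the labels $\{\Lambda_f\}$ determine the lattice $\Lambda$ (hence $G_0 = \R^n/\Lambda$) and the isotropy group $H = \ker\partial\times \partial(Q)/\Lambda$ appearing in the local model near $p$, and by Remark \ref{sliceremark} the underlying Hamiltonian $G_0$-manifold $E_0$ is determined up to equivariant symplectomorphism by the rational polyhedral cone $C_p$ and its facet labels, so the local models are isomorphic as Hamiltonian $G_\bu$-groupoids; and (b) patching — one uses a partition of unity on $\Delta$ (or a Čech-style argument over an open cover of $\Delta$ by the $U_p$'s) together with an equivariant Moser/Weinstein argument to upgrade the collection of local isomorphisms to a global equivalence of Hamiltonian $\G$-stacks $\X \simeq \X'$. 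Here the stacky setting requires care: one must check the local isomorphisms are compatible over double overlaps up to a coherent 2-isomorphism, but since all the local models share the same presenting group $G_\bu$ and the transition data lives in the (discrete, abelian) isotropy, this should reduce to the manifold case on the level of $G_0$-manifolds plus a cocycle computation that is controlled by Proposition \ref{proposition;basicvfandform}.

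For surjectivity, given an abstract decorated stacky moment polytope $(\Delta,\G,\{\Lambda_f\}_{f\in F})$, I would construct $\X$ by the stacky analogue of Delzant's symplectic reduction. Concretely: the labels $\{\Lambda_f\}$ determine, for each facet, a primitive inward normal in $\partial(Q)$ together with a scaling; assemble these into a map of quasilattices $\Z^{F^{max}} \to (Q \to \R^n)$, let $N$ be the kernel subtorus (or rather the kernel stacky torus, keeping track of $\ker\partial$), and perform reduction of $\C^{F^{max}}$ (with its standard $\T^{F^{max}}$-action) by the stacky subgroup presented by $N$ at the appropriate level set. Because $\Delta$ is simple and the labels satisfy the compatibility condition (2) of Definition \ref{stackypoly}, the reduction is an étale stack with an effective residual stacky-torus action, and one checks conditions \eqref{cond1}--\eqref{cond2} of Definition \ref{toricdef} directly — compactness from boundedness of $\Delta$, connectedness of $\X^\coarse$ from connectedness of $\Delta$, condition \eqref{cond5} and \eqref{cond6} from the fact that reduced spaces of linear torus actions are points, and density of regular points from simplicity. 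Finally one verifies that the decorated stacky moment polytope of this $\X$ is the one we started with, which is essentially a bookkeeping check that the lattice $\Lambda$ in the local model at a face $f$ equals $\Lambda\cap\ann(f)$ by construction. Section \ref{section9} also extends this construction to one-parameter families, which is needed for the deformation application but not strictly for the bijection.

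The main obstacle I expect is the patching step in the injectivity half — specifically, globalizing the local isomorphisms of Hamiltonian $\G$-stacks coherently. In the manifold case this is handled by an equivariant Moser argument on the whole preimage of $\Delta$; in the stacky case one must run this argument at the level of the presenting groupoids while ensuring the 2-isomorphisms witnessing compatibility on overlaps can be chosen to satisfy the cocycle condition, so that they descend to a genuine equivalence of stacks rather than merely a compatible family of local equivalences. The fact that our stacks are étale with discrete abelian isotropy and that all local models are presented over the \emph{same} Lie $2$-group $G_\bu$ (Theorem \ref{localnormalform}) is what makes this tractable: the obstruction to globalizing lives in a (discrete, abelian) cohomology group over $\Delta$, and since $\Delta$ is contractible (it is a convex polytope) this obstruction vanishes. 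Making that last point precise — that the relevant cocycle over the contractible base $\Delta$ is a coboundary — is the crux of Section \ref{section8}.
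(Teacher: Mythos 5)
Your proposal is correct and follows essentially the same route as the paper: uniqueness via the local normal form (Theorem \ref{localnormalform}), local uniqueness, and a \v{C}ech-style gluing of local automorphisms over the polytope in the style of Lerman--Tolman(--Woodward), with the $\ker\partial$-gerbe and cocycle obstructions killed by contractibility/convexity of the base; and existence via the stacky Delzant reduction of $\C^{F^{max}}$ by the kernel presented by the map $\Z^{F^{max}}\to(Q\to\R^n)$, which is exactly the construction of Section \ref{section9} specialized to the constant deformation. The only cosmetic difference is that the paper realizes the transition automorphisms as time-one Hamiltonian flows of functions pulled back from the base and resolves the cocycle sheaf-theoretically, rather than invoking an equivariant Moser argument.
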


\begin{proof}
Given a stacky moment polytope $(\Delta,\G,\{\Lambda_f\}_{f\in F})$, the existence of a toric stack $(\X,\bomega,\G,\bmu)$ with $(\bmu(\X),\G,\{\Lambda_f\}_{f\in F})=(\Delta,\G,\{\Lambda_f\}_{f\in F})$ is a corollary of Theorem \ref{deformations}, by taking the identity deformation of $\Delta$. On the other hand, the uniqueness of $\X$ up to equivalence is Theorem \ref{globaluniqueness}.
\end{proof}

\begin{remark} \label{toricorbifold}
In the context of the previous theorem, let us assume that $\G=\R^n/\Z^n$ is a torus and that $\X$ is an effective orbifold (an effective proper \'etale stack). Then $(\X,\bomega,\G,\bmu)$ is an (effective) compact toric symplectic orbifold, as classified by Lerman and Tolman in \cite{lerman-tolman;hamiltonian-torus-actions-symplectic-orbifolds}. One can recover the positive integer labels on facets of $\bmu(\X)$ described by Lerman and Tolman as follows. For each facet $f\in F^{max}$, the lattice $\Lambda_f$ is a subgroup of $Q=\partial(Q)= \Z^n\subset\g\cong \R^n$. A generator of $\Lambda_f$ is a positive integer multiple of a unique generator of $\ann(f)\cap \Z^n$. This positive integer is the label of $f$ in Theorem 6.4 of \cite{lerman-tolman;hamiltonian-torus-actions-symplectic-orbifolds}.
\end{remark}

\section{Local Uniqueness}
\label{section7}

In this section we show that a compact toric symplectic stack is determined locally by its decorated stacky moment polytope. This follows quickly from Theorem \ref{localnormalform} and the next theorem, which is a consequence of Lemma A.1 of \cite{lerman-tolman;hamiltonian-torus-actions-symplectic-orbifolds} and the Hamiltonian slice theorem (Theorem \ref{HamSlice}).

\begin{theorem}[Local uniqueness for toric manifolds] \label{uniquemodels} 
Let $(M,\omega,\T,\mu)$ and $(M',\omega',\T,\mu')$ be symplectic manifolds of dimension $2n$, with an effective Hamiltonian torus action of $\T\cong \R^n/\Z^n$. Let $x\in M$ and $x'\in M'$, and assume that there is some affine transformation $g\in Gl_n(\Z)\ltimes \R^n$ taking $\mu(x)$ to $\mu'(x')$, which is an isomorphism of the moment polytopes $\mu(M)$ and $\mu'(M')$ near $x$. Then, there is an isomorphism of Hamiltonian $\T$-manifolds, from a $\T$-equivariant neighborhood of $\T\cdot x$ to a $\T$-equivariant neighborhood of $\T\cdot x'$.
\end{theorem}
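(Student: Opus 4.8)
The plan is to use the affine transformation $g$ to reduce to the case where the two moment polytopes coincide near a common point, and then to apply the Hamiltonian slice theorem on both sides and identify the resulting local models.

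First I would normalize using $g$. Write $g=(A,b)$ with $A\in Gl_n(\Z)$ and $b\in\R^n$, acting on $\g^*\cong\R^n$. Replacing the $\T$-action on $M'$ by its composite with the automorphism of $\T$ induced by $A$, and replacing $\mu'$ by $g\n\circ\mu'$, one again obtains an effective Hamiltonian toric $\T$-manifold --- effectivity and the cocharacter lattice $\Hom(S^1,\T)\cong\Z^n$ are preserved because $A\in Gl_n(\Z)$ --- and now $\mu(x)=\mu'(x')=:p$, with $\mu(M)$ and $\mu'(M')$ agreeing on a neighborhood of $p$. It therefore suffices to treat this reduced situation.

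Next I would apply Theorem \ref{HamSlice} at $x$ and at $x'$, producing isomorphisms of Hamiltonian $\T$-manifolds from a $\T$-invariant neighborhood of $\T\cdot x$ in $M$ to a neighborhood of the zero section of a model $(E,\omega_E,\T,\mu_E)$ near $\T\cdot x$, and from a $\T$-invariant neighborhood of $\T\cdot x'$ in $M'$ to a neighborhood of the zero section of a model $(E',\omega_{E'},\T,\mu_{E'})$ near $\T\cdot x'$. It then remains to identify $E$ and $E'$ near their zero sections. By Remark \ref{sliceremark}, since $2\dim\T=\dim M$ and the $\T$-action is effective, $E=\T\times^{\T_x}(\ann(\g_x)\times V)$ is a standard symplectic toric manifold whose moment image $\mu_E(E)$ is the tangent cone $C_p$ of $\mu(M)$ at $p$, rational with respect to $\Z^n$, with symplectic slice $V\cong\C^{\dim\T_x}$; the same description holds for $E'$ with the tangent cone $C_p'$ of $\mu'(M')$ at $p$.

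Finally, since the polytopes coincide near $p$ we have $C_p=C_p'$; the local model of an effective symplectic toric manifold near an orbit is determined, up to isomorphism of Hamiltonian $\T$-manifolds, by this tangent cone together with the lattice $\Z^n$ --- concretely, $C_p$ pins down $\g_x$ (hence $\T_x$) and the weights of the slice representation $V$, which in the manifold case form a $\Z$-basis of the character lattice of $\T_x$, so $E\cong E'$ as Hamiltonian $\T$-manifolds, compatibly with the zero sections. This rigidity is exactly Lemma A.1 of \cite{lerman-tolman;hamiltonian-torus-actions-symplectic-orbifolds} specialized to manifolds, and I expect it to be the only genuine obstacle; the rest is bookkeeping --- checking that the affine reduction preserves effectivity and the cocharacter lattice, and matching $\mu_E(E)$ with the polytope's tangent cone via Remark \ref{sliceremark}. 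Composing the three isomorphisms then gives the asserted one.
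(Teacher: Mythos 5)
Your argument is correct and follows essentially the same route the paper takes: the paper derives Theorem \ref{uniquemodels} precisely from the Hamiltonian slice theorem (Theorem \ref{HamSlice}) together with Lemma A.1 of Lerman--Tolman, which is the rigidity statement you invoke to match the two local models after normalizing by the affine transformation. Your write-up in fact supplies more of the bookkeeping (the $Gl_n(\Z)\ltimes\R^n$ reduction and the identification of $\mu_E(E)$ with the tangent cone) than the paper records.
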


\begin{theorem}[Local uniqueness for toric stacks] \label{localuniqueness}
Let $(\X,\bomega,\G,\bmu)$ and $(\X',\bomega',\G',\bmu')$ be compact toric symplectic stacks, and let $\Phi \colon (\Delta,\G,\{\Lambda_f\}_{f\in F})\cong (\Delta',\G',\{\Lambda_{f'}\}_{f\in F'})$ be an isomorphism of their decorated stacky moment polytopes. Then, for each $p\in \Delta$, there is a small neighborhood $U_p$ and an equivalence of Hamiltonian stacks $\bmu\n(U_p)\simeq {\bmu'}\n(\Phi(U_p))$.
\end{theorem}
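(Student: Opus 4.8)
The plan is to reduce the statement, via the Local Normal Form (Theorem~\ref{localnormalform}), to an isomorphism between two explicit Hamiltonian action groupoids, and then to construct that isomorphism out of the combinatorial isomorphism $\Phi$ by feeding the data into local uniqueness for toric manifolds (Theorem~\ref{uniquemodels}). Fix $p\in\Delta$, lying in the relative interior of a face $f$, and let $f'\subset\Delta'$ and $p'\in f'$ be the face and point with $\Lie(\Phi)^*(f')=f+c$ and $\Lie(\Phi)^*(p')=p+c$, where $\Lie(\Phi)^*(\Delta')=\Delta+c$. By Theorem~\ref{localnormalform} choose a neighborhood $U_p\ni p$ and a local model $\bmu\n(U_p)\simeq(\B E_\bu,\B\omega_{E_\bu},\B G_\bu,\B\mu_{E_\bu})$, with $E_\bu=(\ker\partial\times\partial(Q)/\Lambda)\ltimes E_0$, $G_0=\R^n/\Lambda$, $G_\bu=(\ker\partial\times\partial(Q)/\Lambda)\ltimes G_0$, and $(E_0,\omega_{E_0},G_0,\mu_{E_0})$ a neighborhood of the zero section in the model near a $G_0$-orbit in a Hamiltonian $G_0$-manifold, with $G_0$ acting effectively. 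Shrinking $U_p$, fix likewise a local model for $\X'$ near $p'$. It then suffices to produce an equivalence $\B E_\bu\simeq\B E'_\bu$ of Hamiltonian stacks for suitable choices of these two local models.

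\emph{Arranging matching data.} The equivalence $\Phi\colon\G\to\G'$, being a Morita equivalence of the presenting $2$-tori, gives a linear isomorphism $A:=\Lie(\Phi)\colon\Lie(\G)\to\Lie(\G')$ carrying $\partial(Q)$ onto $\partial'(Q')$ and $\ker\partial$ onto $\ker\partial'$, with dual $A^*$ satisfying $A^*(\Delta')=\Delta+c$, $A^*(C_{p'})=C_p$ on the polyhedral cones of Corollary~\ref{locallypolyhedral}, and $A(\Lambda_h)=\Lambda_{h'}$ for corresponding faces. By Remark~\ref{sliceremark} the cone $C_p$ is a smooth toric cone with respect to the cocharacter lattice $\Hom(S^1,G_0)=\Lambda$, and $\Lambda\cap\ann(h)=\Lambda_h$ for every face $h$ with $p\in\bar h$ by~\eqref{polytopelabel}; applying $A$, the subgroup $A(\Lambda)\subset\partial'(Q')$ is a full-rank lattice with $A(\Lambda)\cap\ann(h')=\Lambda_{h'}$ which makes $C_{p'}$ a smooth toric cone, so it satisfies the same relations near $p'$ that $\Lambda$ satisfies near $p$. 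The point is that $A(\Lambda)$ is therefore a legitimate choice of lattice for a local model of $\X'$ near $p'$: the freedom in the lattice in Proposition~\ref{initiallocal}/Theorem~\ref{localnormalform} is precisely the freedom, provided by the Morita equivalences of Section~\ref{section3}, of replacing a chart by the one built from a full-rank sublattice of $\partial(Q)$ compatible with the local singular structure (one may pass to a finite cover of $E_0$ in the directions tangent to the orbit through $p$). So we may re-choose the local model of $\X'$ near $p'$ so that its lattice is exactly $\Lambda':=A(\Lambda)$, its structure $2$-torus is $A_\bu$-identified with $G_\bu$, and its toric manifold part $E'_0$ is a Hamiltonian $\R^n/\Lambda'$-manifold whose moment data near $p'$ is $A^*$-identified, up to the translation $c$, with that of $E_0$.

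\emph{Assembling the equivalence.} Identifying $\R^n/\Lambda\cong\R^n/\Lambda'$ via $A$ and both with $\T^n=\R^n/\Z^n$ by a $\Z$-basis of $\Lambda$, the manifolds $E_0$ and $E'_0$ become effective Hamiltonian $\T^n$-manifolds of dimension $2n$ whose moment polytopes agree, via the affine transformation in $Gl_n(\Z)\ltimes\R^n$ induced by $A^*$, on a neighborhood of the relevant orbits. Theorem~\ref{uniquemodels} then supplies an isomorphism of Hamiltonian $\R^n/\Lambda'$-manifolds between a neighborhood of $G_0\cdot x$ in $E_0$ and a neighborhood of the corresponding orbit in $E'_0$. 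Since in a local model the arrow manifold is the action groupoid of $H=\ker\partial\times\partial(Q)/\Lambda$ acting on the objects through $\partial$ (this is part of the conclusion of Theorem~\ref{localnormalform}), this manifold isomorphism, together with the isomorphism $A_\bu\colon G_\bu\to G'_\bu$ of $2$-tori, upgrades to an isomorphism $E_\bu\to E'_\bu$ of Hamiltonian $G_\bu$-groupoids preserving the $0$-symplectic forms and moment maps (up to the constant $c$); applying $\B$ and composing with the fixed presentations gives the desired equivalence $\bmu\n(U_p)\simeq\bmu'\n(\Phi(U_p))$ of Hamiltonian stacks.

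\emph{The main obstacle.} The one substantive step, rather than bookkeeping, is the middle claim that $A(\Lambda)$ is realized by a local model of $\X'$ near $p'$ --- equivalently, that the admissible lattices near a point are governed entirely by the decorated polytope data there and are transported bijectively by $\Phi$, so that the ambiguity of $\Lambda$ in Theorem~\ref{localnormalform} is absorbed by Morita equivalence between action groupoids built from commensurable-in-the-relevant-directions sublattices of $\partial(Q)$. Once this is in hand, everything else is formal transport of structure along $\Phi$, reassembly of a groupoid isomorphism from a manifold isomorphism, and the black-box input of Theorem~\ref{uniquemodels}.
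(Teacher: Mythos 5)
Your overall architecture is the same as the paper's: reduce to the two local models of Theorem~\ref{localnormalform}, arrange for them to carry the same lattice, invoke Theorem~\ref{uniquemodels} on the level of the underlying Hamiltonian torus manifolds, and lift the resulting equivariant diffeomorphism to an isomorphism of action groupoids using the explicit form $(\ker\partial\times\partial(Q)/\Lambda)\ltimes E_0$ of the arrow spaces. The problem is that the step you yourself single out as ``the main obstacle'' --- that $A(\Lambda)$ can be realized as the lattice of a local model of $\X'$ near $p'$ --- is precisely the step the paper's proof is devoted to, and your proposed mechanism for it does not work as stated. You claim the lattice ambiguity is absorbed by ``passing to a finite cover of $E_0$ in the directions tangent to the orbit through $p$,'' i.e.\ by comparing $\Lambda'$ and $A(\Lambda)$ through sublattices. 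But the decorated polytope only pins down the intersections with $\ann(f)$ (namely $\Lambda_f$); in the face-parallel directions the two lattices are merely full-rank lattices inside $\partial(Q)$, which may be dense, so they need not be commensurable at all (think of $\Z$ versus $\sqrt2\,\Z$ inside $\Z+\sqrt2\,\Z\subset\R$). No finite cover or common sublattice connects them, so the asserted ``freedom'' cannot be justified this way. (A smaller inaccuracy: you also assert via Remark~\ref{sliceremark} that $C_p$ is \emph{smooth} with respect to $\Lambda$; it is only simple and rational --- finite stabilizers in $G_0$ are allowed, as in the orbifold case --- but this is inessential to your argument.)

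The paper closes exactly this gap by an unroll-and-reroll construction rather than finite covers: write $\Lambda=\Lambda_K\oplus\Lambda_f$ and $\Lambda'=\Lambda_{K'}\oplus\Lambda_f$ with $K,K'$ complementary to $\mathfrak{s}=\operatorname{span}\ann(f)$, pass from $E_\bu$ to the Morita equivalent groupoid $\tilde{E}_\bu=(\ker\partial\times\partial(Q)/\Lambda_f)\ltimes(K\times\ann(\mathfrak{s})\times V)$ (an \emph{infinite} cover, unrolling the free $K/\Lambda_K$-directions completely), use the canonical identification $K\cong K'$ of complements to $\mathfrak{s}$, and then quotient by $\Lambda_{K'}$ (acting freely and properly) to obtain a Hamiltonian $G'_\bu$-groupoid $\hat{E}_\bu$ Morita equivalent to $E_\bu$ but carrying the lattice $\Lambda'$. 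Only then is Theorem~\ref{uniquemodels} applied, and the manifold isomorphism lifted to the action groupoids as you describe. So your proposal is correct in outline and in its final assembly step, but the pivotal lattice-matching claim needs this explicit chain of Morita equivalences of Hamiltonian groupoids (or an equivalent argument); as written, the justification offered would fail in the genuinely non-rational case, which is the whole point of the theorem.
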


\begin{proof}
Without loss of generality assume $\Phi$ is the identity map. Let $U_p$ be a neighborhood of $p$ and let $(E_\bu,\omega_{E_\bu}, G_\bu,\mu_{E_\bu})$ and $(E'_\bu,\omega_{E'_\bu}, G'_\bu,\mu_{E'_\bu})$ be local models for $\bmu\n(U_p)$ and ${\bmu'}\n(U_p)$, respectively. The idea of the proof is to show that $(E_\bu,\omega_{E_\bu}, G_\bu,\mu_{E_\bu})$ is Morita equivalent to a Hamiltonian groupoid $(\hat{E}_\bu,\omega_{\hat{E}_\bu},G'_\bu, \mu_{\hat{E}_\bu})$, and then apply Theorem \ref{uniquemodels}.

Let $f$ be the open face of $\Delta$ containing $p$, and let $S\cong \R^s$ be the subgroup of $\R^n$ with Lie algebra $\mathfrak{s}=\ann(f)$. Then $G_0=\R^n/\Lambda$ and $G_0'=\R^n/\Lambda'$ can each be decomposed as
\[
G_0 = K/\Lambda_K \times S/\Lambda_f;\qquad G_0' = K'/\Lambda_{K'} \times S/\Lambda_f
\]
where $K,K'\subset \R^n$ are subspaces which are complementary to $S$, and where 
\[
\Lambda_K+\Lambda_f=\Lambda;\qquad \Lambda_{K'}+\Lambda_f=\Lambda'.
\]
 Then $K/\Lambda_K$ acts freely on $E_0$, and from Theorem \ref{HamSlice} we find
\begin{align*}
E_\bu \cong & (\ker \partial \times \partial(Q)/\Lambda)\ltimes (K/\Lambda_K \times \ann(\mathfrak{s}) \times V ). 
\end{align*}
Here $V$ is a symplectic vector space with an effective Hamiltonian action of $S/\Lambda_S$. Let $\tilde{G}_\bu$ be the Lie $2$-group 
\[
\tilde{G}_\bu = (\ker\partial \times \partial(Q)/ \Lambda_f) \ltimes( K \times S/\Lambda_f)
\]
and consider the Lie groupoid
\[
\tilde{E}_\bu = (\ker\partial \times \partial(Q)/\Lambda_f) \ltimes (K \times \ann(\mathfrak{s}) \times V),
\]
equipped with the symplectic structure and Hamiltonian $\tilde{G}_\bu$-action making the quotient map $(\tilde{E}_\bu,\omega_{\tilde{E}_\bu},\tilde{G}_\bu,\mu_{\tilde{G}_\bu})\to (E_\bu,\omega_{E_\bu},G_\bu,\mu_{G_\bu})$ a Morita equivalence of Hamiltonian groupoids.

Because $K'$ and $K$ are both subspaces of $\R^n$ which are complementary to $S$, there is a canonical isomorphism of $\tilde{G}_\bu$ with the Lie $2$-group 
\[
\tilde{G}_\bu ' = (\ker\partial \times \partial(Q)/ \Lambda_f) \ltimes( K' \times S/\Lambda_f).
\]
The action of $\Lambda_{K'}\subset \tilde{G}'_0$ on $\tilde{E}_0$ is free and proper, and there is a symplectic structure and Hamiltonian $G'_\bu$ action on
\[
\hat{E}_\bu = (\ker\partial \times \partial(Q)/\Lambda') \ltimes \left((K \times \ann(\mathfrak{s}) \times V)/\Lambda_{K'}\right)
\]
making the quotient map
\[
(\tilde{E}_\bu,\omega_{\tilde{E}_\bu},\tilde{G}'_\bu,\mu_{E_\bu}) \to (\hat{E}_\bu,\omega_{\hat{E}_\bu},G'_\bu, \mu_{\hat{E}_\bu})
\]
a Morita equivalence of Hamiltonian stacks.

Let $x\in \mu_{\hat{E}_0}\n(p)$ and let $x'\in \mu_{E'_0}\n(p)$. By Theorem \ref{uniquemodels}, there is an isomorphism of Hamiltonian $G_0'$-manifolds, from a $G'_0$-equivariant neighborhood of $x$ to a $G'_0$-equivariant neighborhood of $x'$. Since both $\hat{E}_\bu$ and $E'_\bu$ are action groupoids for the action of $\ker\partial \times \partial(Q)/\Lambda'$ on $\hat{E}_0$ and $E'_\bu$, respectively, this $G'_0$-equivariant isomorphism of manifolds can be lifted to an isomorphism of Hamiltonian $G'_\bu$-groupoids.
\end{proof}

\section{Global Uniqueness}
\label{section8}

This section is entirely devoted to the proof of the next theorem. It uses the ideas and results of a similar proof by Lerman, Tolman, and Woodward of Proposition 7.3 and Theorem 7.4 in  \cite{lerman-tolman;hamiltonian-torus-actions-symplectic-orbifolds}, which is in the context of toric symplectic orbifolds. To avoid dealing with the theory of exact sequences of stacks (over a polytope) and their cohomology, we do not fully generalize their argument to the present context. We employ instead a more direct approach. 

\begin{theorem}
\label{globaluniqueness}
Let $(\X,\bomega,\G,\bmu)$ and $(\X',\bomega',\G',\bmu')$ be toric stacks, and assume that there is an isomorphism $\Phi_\G\colon (\bmu(\X),\G,\{\Lambda_f\}_{f\in F}) \to (\bmu'(\X'),\G',\{\Lambda_f\}_{f\in F})$ of their stacky moment polytopes. Then, there is an equivalence 
\[(\Phi_\X,\Phi_\G)\colon (\X,\bomega,\G,\bmu)\to (\X',\bomega',\G',\bmu')\]
of toric stacks.
\end{theorem}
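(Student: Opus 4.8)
The plan is to reconstruct $\Phi_\X$ by gluing together the local equivalences supplied by Theorem~\ref{localuniqueness}, using a sheaf-theoretic/\v{C}ech argument over the polytope $\Delta$ to handle the overlaps, exactly as in the toric-orbifold case of Lerman--Tolman--Woodward but adapted to the stacky setting. Without loss of generality assume $\Phi_\G=\id$, so $\G=\G'$ and $\Delta=\bmu(\X)=\bmu'(\X')=\Delta'$ with matching vertex/facet labels $\{\Lambda_f\}$. First I would choose a good open cover $\{U_\alpha\}$ of $\Delta$, one piece around each face, small enough that Theorem~\ref{localuniqueness} gives an equivalence $\bphi_\alpha\colon \bmu\n(U_\alpha)\simeq\bmu'\n(U_\alpha)$ of Hamiltonian $\G$-stacks, and so that all nonempty intersections $U_{\alpha\beta}=U_\alpha\cap U_\beta$ are connected and contractible (achievable by taking the cover dual to a triangulation of $\Delta$ subordinate to its face structure). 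On $U_{\alpha\beta}$, the two restrictions $\bphi_\alpha$ and $\bphi_\beta$ differ by a $\G$-equivariant symplectic, moment-map-preserving self-equivalence $\bpsi_{\alpha\beta}$ of $\bmu'\n(U_{\alpha\beta})$; the content of the argument is that these transition discrepancies can be trivialized.

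The next step is the classification of such self-equivalences. Over a connected open $U\subset\Delta$, a $\G$-equivariant automorphism of a local model $\bmu'\n(U)$ that preserves $\bomega'$ and $\bmu'$ and acts as the identity on the coarse quotient is, on the level of presenting groupoids, determined by a section of a sheaf on $U$ whose stalks are built from $\Lie(\G)$-valued data modulo the quasilattice $Q$ — concretely, since $Z_0$ is (a piece of) a toric manifold on which $G_0=\R^n/\Lambda$ acts with the regular locus being a free torus bundle over the interior of $U$, such automorphisms correspond to $\G$-valued (equivalently $(\R^n/\Lambda)$-valued, with $\Lambda$ varying with the face) locally constant data twisted by the fact that the stabilizer jumps along faces. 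The upshot, following \cite[\S7]{lerman-tolman;hamiltonian-torus-actions-symplectic-orbifolds}, is that the relevant automorphism sheaf $\mathcal{A}$ on $\Delta$ is soft/fine (it is a sheaf of sections of a bundle of abelian Lie groups, admitting partitions of unity because $\Delta$ is a simplicial complex), so that $H^1(\Delta;\mathcal{A})=0$. The cocycle $\{\bpsi_{\alpha\beta}\}$ is a \v{C}ech $1$-cocycle valued in $\mathcal{A}$ — cocycle-ness follows because on triple overlaps $\bphi_\alpha,\bphi_\beta,\bphi_\gamma$ are all equivalences with the same source and target — so it is a coboundary: $\bpsi_{\alpha\beta}=\bpsi_\beta\bpsi_\alpha\n$ for $\G$-equivariant self-equivalences $\bpsi_\alpha$ of $\bmu'\n(U_\alpha)$. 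Then $\bpsi_\alpha\n\circ\bphi_\alpha$ agree on overlaps up to coherent $2$-isomorphism and glue (using that $\mathbf{DiffStack}$ has descent) to the desired global equivalence $\Phi_\X\colon\X\to\X'$, which preserves $\bomega$ and $\bmu$ because each piece does.

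The main obstacle I expect is twofold. First, correctly identifying the automorphism sheaf $\mathcal{A}$ and proving $H^1(\Delta;\mathcal{A})=0$ in the presence of ineffectivity and non-rational, face-dependent lattices $\Lambda_f\subset\partial(Q)$: the stalk of $\mathcal{A}$ at a point of a face $f$ involves the discrete group $H\cong\ker\partial\times(\partial(Q)\cap\ann(f))/(\Lambda\cap\ann(f))$ appearing in \eqref{Hiso} as well as a connected torus factor, and one must check these assemble into a genuine sheaf of abelian groups admitting partitions of unity, so that the vanishing theorem applies; this is exactly the point where the excerpt says the authors ``do not fully generalize their argument'' and ``employ instead a more direct approach,'' so the real work is replacing the exact-sequence-of-stacks machinery by an explicit verification that the transition data can be straightened out. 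Second, the $2$-categorical bookkeeping: since $\X\simeq\B X_\bu$ is a stack, ``gluing'' the maps $\bpsi_\alpha\n\circ\bphi_\alpha$ requires producing a compatible system of $2$-isomorphisms on overlaps satisfying the cocycle condition on triple overlaps, which I would handle by refining the cover enough that all relevant intersections are contractible (killing the ambiguity in $2$-isomorphisms) and invoking the descent property of $\mathbf{Stack}$; the equivariance, symplectic, and moment-map conditions are local and hence automatically inherited by the glued map. Throughout, Theorem~\ref{appAthm} and the strictness results are used to reduce everything to strict actions of Lie $2$-groups on honest groupoids, so that the sheaf lives on the honest polytope $\Delta$ rather than on a stacky base.
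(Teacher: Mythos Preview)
Your overall architecture matches the paper's: cover $\Delta$ by local models, pull the equivalences from Theorem~\ref{localuniqueness}, regard the mismatches $\bpsi_{\alpha\beta}$ as a $1$-cocycle over $\Delta$, and kill it. But the paper carries this out along a different, more concrete route, precisely because the two obstacles you flag are real and your sketch does not resolve them.

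First, the claim that the automorphism sheaf $\mathcal{A}$ is soft/fine is not correct as stated. The automorphism group of a local model over $U$ has a discrete part coming from $\ker\partial\times\partial(Q)/\Lambda$ (cf.\ \eqref{Hiso}), so partitions of unity do not apply, and $H^1(\Delta;\mathcal{A})$ does not obviously vanish. The paper handles this in two steps. Lemma~\ref{removekernel} shows $X_\bu\cong(\ker\partial\rightrightarrows\star)\times X_\bu^{str}$ as Hamiltonian $G_\bu$-groupoids, so one may assume $\ker\partial=0$ from the outset; this removes half of the discrete trouble. Then Lemma~\ref{automorphisms} shows that any $\G$-equivariant self-equivalence of $\bmu\n(V)$ lifts to an honest automorphism $\phi_0$ of the Hamiltonian $G_0^i$-manifold $Y=\mu_{E_0^i}\n(V)$. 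At that point one invokes the manifold-level fact (Lerman--Tolman, Prop.~7.3) that such $\phi_0$ is the time-$1$ flow of the Hamiltonian vector field of a function $f^{i,j}\colon U_{i,j}\to\R$ pulled back from the polytope. The \v{C}ech argument is then run on the sheaf of \emph{smooth functions on $\Delta$} modulo constants plus affine-linear functions $\langle\partial(q),\cdot\rangle$ with $q\in Q$---and that sheaf is fine. This is the substance of the ``more direct approach'' the paper advertises: the obstruction sheaf is not the stacky automorphism sheaf but a sheaf of smooth functions, and the residual discrete ambiguity from $\partial(Q)/\Lambda$ shows up only as the affine-linear corrections in \eqref{coboundary}.

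Second, your cocycle $\{\bpsi_{\alpha\beta}\}$ is only a weak cocycle (equalities in \eqref{stackcocycle} hold up to $2$-isomorphism), and you propose to glue via stacky descent. The paper avoids $2$-categorical gluing entirely: once the $f^{i,j}$ have been corrected by coboundaries $f^i$, the modified local equivalences fit into the diagram \eqref{twosquarediag}, and because $\ker\partial=0$ an arrow in the regular locus of $X_1$ is determined by its source and target. Density of the regular locus (Definition~\ref{toricdef}\eqref{cond2}) then forces the maps $\phi_1^{i\times j}$ to be unique, so they assemble to a strict isomorphism $X_\bu\cong X_\bu'$ of Hamiltonian $G_\bu$-groupoids, bypassing descent. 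Your plan would need an argument that the $2$-isomorphisms on triple overlaps can be chosen coherently; contractibility of the $U_{\alpha\beta\gamma}$ alone does not give this when the isotropy is nontrivial, which is again why the reduction to $\ker\partial=0$ comes first.
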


\begin{proof}

The structure of the proof is as follows. We first prove three lemmas. Lemma \ref{abstractnonsense} gives a description of certain atlases of $\X$ and $\X'$. In Lemma \ref{automorphisms}, we show that local automorphisms of symplectic toric stacks come from local automorphisms of the Hamiltonian groupoids constructed in Lemma \ref{abstractnonsense}. Finally, in Lemma \ref{removekernel} we reduce the situation to when $\G$ has trivial isotropy groups. After this setup, we give an argument based on \v{C}ech cohomology to show how to build a global equivalence $\X\simeq \X'$ from a collection of local equivalences.

Without loss of generality assume $\G'=\G$, and $\Phi_\G=\id$. Write $\bmu(\X)=\bmu'(\X')=\Delta$. Let $p_1,\dots, p_m$ be a collection of points in $\Delta$ with open neighborhoods $U_{p_1},\dots,U_{p_m}$, so that $\{U_{p_i}\}_{i=1,\dots,m}$ is an open cover of $\Delta$, and so that $\bmu\n(U_{p_i})\simeq{\bmu'}\n(U_{p_i})$ is presented by a local model 
\begin{equation} \label{herearethelocalmodels}
(E_\bu^i, \omega_{E_\bu^i},G_\bu^i, \mu_{E_\bu^i}).
\end{equation}
 Here, $G_0^i = \R^n/\Lambda^i$ is a compact torus as in Theorem \ref{localnormalform}. This is possible by Theorem \ref{localuniqueness}. We can assume the $U_{p_i}$'s are convex.

Consider the atlases
\[
A\colon \coprod_i E_0^i \to \X; \qquad A'\colon \coprod_i E_0^i \to \X'.
\]
Let $X_0 = X_0' = \coprod_i E_0^i$, let $X_1 = \coprod_{i,j} E_0^i\times_{A,\X,A} E_0^j$, and let $X_1'= \coprod_{i,j} E_0^i\times_{A',\X',A'} E_0^j$. Let $G_0=\R^n$ and let $\partial\colon Q\to G_0$ be a quasilattice presenting $\G$. Denote $G_\bu = Q\ltimes \R^n = (\ker\partial \times \partial(Q))\ltimes \R^n$.

By the following lemma, the Hamiltonian groupoids
\[
(X_\bu, \omega_{X_\bu},G_\bu , \mu_{X_\bu}),\quad (X'_\bu, \omega_{X'_\bu},G_\bu , \mu_{X'_\bu})
\]
present $\X$ and $\X'$, respectively. The natural inclusions of $E_\bu^i$ into $X_\bu$ and $X_\bu'$ preserve the symplectic structure and the Hamiltonian $G_\bu = Q\ltimes \R^n$ action.

\begin{lemma}\label{abstractnonsense}
Let $\X$ be a differentiable stack, and let
\[
\coprod_{i=1,\dots, m} E_0^i \to \X,~i=1,\dots, m
\]
be an \'etale atlas of $\X$. Choose smooth manifolds which are equivalent to the stacks $E_0^i\times_{\X} E_0^j$ and $X_0\times_\X X_0$. Then, $\X\simeq \B X_\bu$, where \[
X_0 = \coprod_i E_0^i,\quad X_1 = \coprod_{i,j} E_0^i\times_{\X} E_0^j,
\]
and the Lie groupoid structure on $X_\bu$ comes from the canonical diffeomorphism $
X_1\cong X_0\times_\X X_0.$
\end{lemma}

\begin{proof}
We just need to describe the diffeomorphism $X_1\cong X_0\times_\X X_0$. First, notice that if $A,B$ are smooth manifolds of the same dimension, then the coproduct $A\sqcup B$ as manifolds is equivalent to the weak coproduct $A\sqcup^{st} B$ as stacks: For any stack $\stack{Y}$, there are canonical equivalences 
\[
\Hom(A\sqcup B,\stack{Y}) \simeq \stack{Y}(A\sqcup B)\simeq \stack{Y}(A)\times \stack{Y}(B)\simeq \Hom(A,\stack{Y})\times \Hom(B,\stack{Y})\simeq \Hom(A\sqcup^{st} B,\stack{Y})
\]
which are consequences of the 2-Yoneda lemma, the definition of a stack, the 2-Yoneda lemma, and the universal property of $\sqcup^{st}$, respectively. So, $A\sqcup B$ satisfies the universal property of $A\sqcup^{st} B$, and hence they are equivalent. Similarly, if $A_\bu$ and $B_\bu$ are Lie groupoids presenting smooth manifolds of the same dimension, then $\B(A_\bu \sqcup B_\bu) = \B(A_\bu)\sqcup^{st} \B(B_\bu)$. Let us then write $\sqcup$ for $\sqcup^{st}$ in what follows.

Now, if $A,B,C$ are manifolds, $\stack{Y}$ is a differentiable stack, and there are representable morphisms $A,B,C \to \stack{Y}$, then there is an equivalence
\[
A\times_{\stack{Y}} (B\sqcup C) \simeq A\times_{\stack{Y}} B \sqcup A \times_{\stack{Y}} C,
\]
which is canonical up to natural isomorphism. To verify this, pick a presentation $Y_\bu$ of $\stack{Y}$, and Lie groupoid presentations $A_\bu,B_\bu,C_\bu$ of $A,B,C$, respectively, so that the morphisms of stacks $A,B,C\to \stack{Y}$ come from morphisms of Lie groupoids $A_\bu ,B_\bu ,C_\bu \to Y_\bu$. Then, compute explicitly using weak pullbacks of Lie groupoids.

It follows that there is an equivalence of stacks $X_1\simeq X_0\times_\X X_0$. But $X_1$ and $X_0\times_\X X_0$ are manifolds, so this equivalence is naturally isomorphic to a unique diffeomorphism.
\end{proof}

\begin{lemma}\label{automorphisms}
Fix one of the local models $(E_\bu= E_\bu^i, \omega_{E_\bu}, G_\bu^i, \mu_{E_\bu})$ from \eqref{herearethelocalmodels}. Let $U=U_{p_i}$, let $V\subset U$ be contractible and open in $\Delta$, and let $Y= \mu_0\n(V)\subset E_0$. If $\stackmorphism{\phi}\colon \bmu\n(V) \to \bmu\n(V)$ is an equivalence of Hamiltonian $\G$-stacks, then there is an automorphism $\phi_0\colon Y\to Y$ of Hamiltonian $G_0^i$-manifolds so that the diagram 2-commutes:
\[
\begin{tikzcd}
Y \arrow[r, "\phi_0"] \arrow[d, "A"]
& Y \arrow[d, "A" ] \\
\bmu\n(V) \arrow[r, "\stackmorphism{\phi}"]
& \bmu\n(V)
\end{tikzcd}
\]
Here, $A\colon Y\to \bmu\n(V)$ is the atlas $A\colon X_0 \to \bmu\n(U)$ restricted to $Y$.
\end{lemma}

\begin{proof}
Consider the pullback diagram
\[
\begin{tikzcd}
Z \arrow[rr, "p_2"] \arrow[d, "p_1"]
&& Y \arrow[d , "A"] \\
Y\arrow[r, "A"] & \bmu\n(V) \arrow[r, "\stackmorphism{\phi}"]
& \bmu\n(V)
\end{tikzcd}
\]
The manifold $Z= Y\times_{\stackmorphism{\phi}\circ A, \bmu\n(V),A} Y$ can be interpreted as a submanifold of the space of arrows of an \'etale Lie groupoid $W_\bu$ presenting $\bmu\n(V)$, where $W_0=Y\sqcup Y$ has the atlas $ (\stackmorphism{\phi} \circ A)\sqcup A \colon Y\sqcup Y \to \bmu\n(V)$. More precisely, $Z$ is the submanifold of $W_1$ consisting of arrows with source in the first copy of $Y$ and target in the second copy of $Y$. Then $p_1$ is the restriction to $Z$ of the source map of $W_\bu$, and $p_2$ is the restriction to $Z$ of the target map.

From Theorem \ref{appAthm}, there is a symplectic structure and Hamiltonian $G_1^i$-action on $Z$, making $p_1$ and $p_2$ maps of Hamiltonian $G_0^i$-manifolds. (Here we use the identity bisection $u\colon G_0^i\to G_1^i$ to consider $G_0^i$ as a subgroup of $G_1^i$). It suffices to show that there is a global section $\sigma$ of $p_1$; the map $\sigma$ preserves the moment map and symplectic form, and hence  is automatically a map of Hamiltonain $G_0^i$ manifolds. We will then set $\phi_0= p_2\circ \sigma$.

Let $H= \ker\partial\times \partial(Q)/\Lambda^i$. We will show that $p_1\colon Z\to Y$ is a trivial $H$-principal bundle. First, the action of $H \subset G_1^i$ on $W_1$ is parallel to fibers of $s\colon W_1\to W_0$. In particular, $H$ acts on the fibers of $p_1$. Now, let $\alpha, \alpha' \in Z$, so that $p_1(\alpha) = p_1(\alpha')$. Then, there is a unique arrow $\beta \in Y\times_{A,\bmu\n(V),A} Y\subset W_1$ so that $\beta\circ \alpha = \alpha'$. From the description of $E_\bu$ in Theorem \ref{localnormalform}, we know $Y\times_{A,\bmu\n(V),A} Y\cong H \ltimes Y$. So, recalling the notation for the action of a crossed module from \eqref{xmodact}, there is a unique $h\in H$ so that $h * u(t(\alpha)) = \beta$. But, by~\eqref{equation;compatcross}, we find
\[
\alpha' = \beta\circ \alpha = (h * u(t(\alpha)))\circ \alpha = h * \alpha.
\] So, the action of $H$ on fibers of $p_1$ is free and transitive. Therefore, $p_1$ is a principal $H$-bundle.

Assume that for each $x \in Y$, there is a connected $G_0^i$-equivariant neighborhood $S_x$ of $x$ and an (automatically $G_0^i$-equivariant) local section $\sigma_x \colon S_x \to Z$ of $p_1$. Then the transition maps between the local trivializations given by the $\sigma_x$ are the same as the transition maps for a principal $H$-bundle over $Y/G_0^i=\mu_Y(Y) = V$. But $V$ is contractible, so any principal bundle will be trivial. So, in this case there is a global section $\sigma$ of $p_1$.

It suffices then to show that for each $x \in Y$, there is a $G_0^i$-equivariant neighborhood $S_x$ of $x$ and a local section $\sigma_x \colon S_x \to Z$ of $p_1$. Fix $x$, and let $\sigma \colon T\to Z$ be a local section of $p_1$ on a small open neighborhood $T$ of $x$. 
Since $G_0^i$ is compact and connected, after possibly shrinking $T$ we can assume that for any $y\in T$, the set $T\cap (G^i_0\cdot y)$ is connected. 
We will show that $g\cdot \sigma(y) = \sigma(g\cdot y)$ for any $y\in T$ and any $g\in G_0^i$ which satisfies $g\cdot y\in T$. Then, we can define $S_x = G_0^i\cdot T$, and the section
\[
\sigma_x\colon S_x\to Z; \quad g\cdot y\mapsto g \cdot \sigma(y)
\]
of $p_1$ is well defined.

First, assume $g\cdot y = y$. Then since the stabilizer of $y$ in $G_0^i$ is connected, there is some $v \in \g_y$ so that $g=\exp(v)$, where $\g_y$ is the kernel of the linear map $\g \to T_y Y$ given by the action of $\g$. Since $\sigma$ is a local section of $p_1$, it intertwines the moment maps and hence the commutes with the $\g$ action. So, $v\in \g_{\sigma(y)}$ and consequently $g\cdot \sigma(y) =\sigma(y)$. 

Now, consider any $g\in G_0^i$ and $y\in T$, such that $g\cdot y \in T$. Then, since $T\cap (G^i_0\cdot y)$ is connected, there is some $\tilde{g}\in G_0$ near the identity so that $\tilde{g}\cdot y =g\cdot y$, and so that $\sigma(\tilde{g}\cdot z) = \tilde{g}\cdot  \sigma(z)$ as long as $z\in T$ and $\tilde{g}\cdot z\in T$. Then,
\[
\sigma(\tilde{g}\n g\cdot y) = \tilde{g}\n \cdot \sigma(g\cdot y).
\]
And, by the previous paragraph,
\[
\sigma(\tilde{g}\n g\cdot y) =\tilde{g}\n g \cdot \sigma(y)
\] 
since $\tilde{g}\n g\cdot y= y$. So, $\sigma(g\cdot y)=g\cdot  \sigma(y)$, as desired.
\end{proof}

We now turn our attention to the isotropy groups of $X_\bu$. It is easy to verify that, for $\alpha,\beta,\alpha\circ\beta \in X_1$ and $q,r\in \ker\partial$, 
\begin{equation}
\label{actioncommutes}
(q*\alpha)\circ (r*\beta) = (qr)*(\alpha\circ \beta);\qquad (q*\alpha)\n=q\n*\alpha\n.
\end{equation}
So, the Lie groupoid structure of $X_\bu$ descends to a Lie groupoid structure on $X_\bu^{str}:= X_1/\ker\partial\rightrightarrows X_0$, which is $X_\bu$ with some isotropy groups removed (``strictified''). Then $X_\bu^{str}$ has the 0-symplectic form $(\omega_{X^{str}_0} = \omega_{X_0}, \omega_{X^{str}_1}= s^*\omega_{X^{str}_0})$. The Hamiltonian action of $G_\bu$ on $X_\bu$ descends to a Hamiltonian action of $G^{str}_\bu:=G_1/\ker\partial\rightrightarrows G_0$ on $X_\bu/\ker\partial$, with moment map $(\mu_{X^{str}_0} = \mu_{X_0}, \mu_{X^{str}_1}= s^*\mu_{X^{str}_0})$.
Let 
\[
(\X^{str}=\B X_\bu^{str},\bomega^{str},\G^{str}, \bmu^{str})
\] 
be the Hamiltonian stack presented by the strictification $X_\bu^{str}$ of $X_\bu$.

We know that $G_\bu$ is a trivial $(\ker\partial\rightrightarrows \star)$-extension of $G_\bu^{str}$. And $X_\bu$ is also a $(\ker\partial\rightrightarrows \star)$-extension of $X_\bu^{str}$; we will show it is a trivial extension as well. In other words, we will show that the natural map $\stackmorphism{\pi}\colon \X\to \X^{str}$ is a trivial $\ker\partial$-gerbe over $\X^{str}$. %, in the sense of Definition 3.24 of \cite{bursztyn-noseda-zhu;principal-stacky-groupoids}. 
%(That is, a trivial $\ker\partial$-gerbe).

Using our fixed splitting $G_\bu = (\ker\partial \times \partial(Q))\ltimes \R^n$, it will be helpful to identify $G_\bu^{str}$ with a (non-full) Lie 2-subgroup of $G_\bu$. In particular one can think of $\X$ as a Hamiltonian $\G^{str}$-stack.

\begin{lemma}
\label{removekernel}
Let $\tilde{X}_0=X_0$ and let $\tilde{X}_1= \ker\partial \times (X_1/\ker\partial)$. Equip $\tilde{X}_\bu$ with the  symplectic structure $\omega_{\tilde{X}_\bu}$ coming from $\tilde{X}_0=X_0$. The action of $G_\bu \cong (\ker\partial \times \partial(Q))\ltimes \R^n$ on $\tilde{X}_\bu$, which is given on arrows as
\begin{align*}
\Big( (\ker \partial \times \partial(Q)) \ltimes \R^n\Big) \times \Big( \ker\partial \times (X_1/\ker\partial) \Big)& \to \ker\partial \times (X_1/\ker\partial) \\
((q,h, g),(r, \alpha)) & \mapsto (qr, h*(g\cdot \alpha))%= (qr, h*r); \quad q,r\in \ker \partial,~h\in \partial(Q),~\alpha\in X_1/\ker\partial,
\end{align*}
is Hamiltonian. 
Then, there is an isomorphism of Hamiltonian $G_\bu$-groupoids
\[
(X_\bu,\omega_{X_\bu}, G_\bu, \mu_{X_\bu}) \cong (\tilde{X}_\bu,\omega_{\tilde{X}_\bu},G_\bu,\mu_{\tilde{X}_\bu}).
\]
which is the identity on the manifold of objects $X_0=\tilde{X}_0$.
\end{lemma}

\begin{proof}
It is obvious that the action described is Hamiltonian, with moment map $\mu_{\tilde{X}_0}=\mu_{X_0}$.  Let $V$ be the interior of $\Delta$, then $V$ consists of the regular values of $\bmu$. We will first show that $\stackmorphism{\pi}|_{\bmu\n(V)}\colon \bmu\n(V)\to (\bmu^{str})\n(V)$ is a trivial principal $\B(\ker\partial \rightrightarrows \star)$-bundle (that is, a trivial $\ker\partial$ gerbe). 

There is a presentation of $(\bmu)\n(V)$ by an \'etale Hamiltonian groupoid $(Z_\bu,\omega_{Z_\bu},G_\bu,\mu_{Z_\bu})$, so that the actions of $G_0$ and $G_1$ on $Z_0$ and $Z_1$, respectively, are free. To construct the  manifold $Z_0$, pick an \'etale presentation $\tilde{Z}_\bu$ of $(\bmu)\n(V)$. Then, pick an open cover $\{V_{q_i}\}$ of $V$, so that $V_{q_i}$ is a neighborhood of $q_i\in V$ with a section $\sigma_{q_i}\colon V_{q_i}\to \tilde{Z}_0$ of the moment map $\mu_{\tilde{Z}_0}\colon \tilde{Z}_0 \to \Lie(\G)^*$ . Finally, let $Z_0 = G_0\times \coprod_{i} V_{q_i}$. The atlas $Z_0\to (\bmu)\n(V)$ is described in \cite[Proposition~B.2]{hoffman-sjamaar;hamiltonian-stack}, and the existence of a Hamiltonian action of $G_\bu$ on $Z_\bu$ is guaranteed by Theorem \ref{appAthm}. Freeness of the $G_1$ action is guaranteed by Definition \ref{toricdef} \eqref{cond6}.

Define
\[
Z_\bu^{str}:= (Z_1/\ker\partial\rightrightarrows Z_0); \quad Z_\bu/G_\bu^{str} := (Z_1/G_1^{str} \rightrightarrows Z_0/G_0).
\]
Note that $Z_\bu^{str}$ is a presentation for $(\bmu^{str})\n(V)$. Then, the diagram is 2-cartesian (in the 2-category of Lie groupoids):
\[
\begin{tikzcd}
Z_\bu \arrow[r] \arrow[d] & Z_\bu^{str} \arrow[d] \\
Z_\bu/G_\bu^{str} \arrow[r] & V.
\end{tikzcd}
\]
Since the 2-functor $\B$ preserves weak pullbacks, the diagram is still 2-cartesian:
\begin{equation}
\label{stackcartgerbe}
\begin{tikzcd}
\bmu\n(V) \arrow[r] \arrow[d] & (\bmu^{str})\n(V) \arrow[d] \\
\B(Z_\bu/G_\bu^{str}) \arrow[r] & V.
\end{tikzcd}
\end{equation}
Then, $\B(Z_\bu/G_\bu^{str}) \to V$ is a $\ker\partial$-gerbe over $V$. Recall from \cite{giraud;cohomology-nonabeliene} that $\ker\partial$-gerbes over $V$ are classified up to equivalence by the \v{C}ech cohomology group $H^2(V, \ker\partial) $. But $V$ is contractible, so there is a section $V\to \B(Z_\bu/G_\bu^{str})$ of the bottom map in \eqref{stackcartgerbe}. 
Since the diagram \eqref{stackcartgerbe} is 2-cartesian, we then have a morphism of Hamiltonian $\G^{str}$-stacks
\[
\stackmorphism{\psi}\colon (\bmu^{str})\n(V) \to \bmu\n(V)
\]
so that $\stackmorphism{\pi}\circ \stackmorphism{\psi}$ is naturally isomorphic to the identity map of $(\bmu^{str})\n(V)$.

Passing to the presentation $X_\bu$, there is a Lie groupoid morphism $\psi_\bu$
\[
\begin{tikzcd}
X_1^{str} \arrow[d, shift right] \arrow[d,shift left] \arrow[r, "\psi_1"]
& X_1 \arrow[d, shift right] \arrow[d,shift left] \\
X_0 \arrow[r, equal, "\psi_0"]
&X_0
\end{tikzcd}
\]
so that $\pi_{X_\bu}\circ \psi_\bu\cong \id_{X_\bu^{str}}$, where $\pi_{X_\bu}\colon X_\bu\to X_\bu^{str}$ is the quotient map. Here, $\psi_1$ is defined only on the open dense subset $\mu_{X_1^{str}}\n(V)\subset X_1^{str}$. Since $X_\bu^{str}$ has trivial isotropy groups over the regular locus of $\mu_{X_0}$, it turns out that $\pi_{X_\bu}\circ \psi_\bu= \id_{X_\bu^{str}}$ where it is defined.

Now, by construction $\stackmorphism{\psi}$ preserves the action of $\G^{str}$ up to isomorphism. So the map $\psi_1$ respects the action of $G_1^{str}$ up to a natural transformation $\eta\colon G_0\times X_0 \to X_1$. However, using the fact that $G_0$ is connected and that the isotropy groups of $X_1$ are discrete, one can show that $\eta(g,x) = \eta(e,g\cdot x)$ for all $g\in G_0$ and $x\in X_0$. As a consequence, $\psi_1$ respects the action of $G_1^{str}$ on the nose.

Since removing the critical points of $\mu_{X_1^{str}}$ does not disconnect any connected open neighborhoods in $X_1^{str}$, there is a unique smooth extension of the map $\psi_1$ to all of $X_1/\ker\partial$. We then have a morphism of Hamiltonian $G_\bu^{str}$-groupoids
\[
\psi_\bu\colon X_\bu^{str}\to X_\bu
\]
which is a section of the quotient map $X_\bu \to X_\bu^{str}$. Composing the action of $\ker\partial\rightrightarrows \star$ with the morphism $\psi_\bu$ gives the morphism 
\[
a\circ(\id\times \psi_\bu) \colon  \tilde{X}_\bu = (\ker\partial\rightrightarrows \star) \times X_\bu^{str} \to X_\bu
\]
which is the desired isomorphism of Lie groupoids; it extends in the obvious way to an isomorphism of Hamiltonian $G_\bu = (\ker\partial\rightrightarrows \star)\times G^{str}_\bu$ groupoids.
\end{proof}

Due to Lemma \ref{removekernel}, to prove Theorem \ref{globaluniqueness} we will now restrict to the case where $\ker \partial =0$. We then have $Q=\partial(Q)$, and $G_1 = \partial(Q)\ltimes \R^n$.

Let $U_{i,j} = U_{p_i}\cap U_{p_j}$, and let $E^{i,j}_\bu= \mu_{E_\bu^i}\n(U_{i,j})$. Consider the equivalences $\stackmorphism{A}^i \colon \B E^i_\bu \simeq \bmu\n(U_i)$, which have the property that the atlas $A|_{E^i_0}\colon E^i_0 \to \bmu\n(U_i)$ factors as $A|_{E^i_0 } = \stackmorphism{A}^i \circ \B \iota$, where $\iota\colon E^i_0\to E^i_\bu$ is the natural inclusion. Choose weak inverses ${\stackmorphism{A}^i}\n$ of these equivalences $\stackmorphism{A}^i$. Similarly, consider the equivalences ${\stackmorphism{A}^i}'\colon \B E_\bu^i \simeq{\bmu'}\n(U_i).$
 Define $\stackmorphism{h}^i$ as the composition
\[
\begin{tikzcd}
\stackmorphism{h}^i\colon \bmu\n(U_i) \arrow[r, "{\stackmorphism{A}^i}\n"] & \B E_\bu^i \arrow[r, "{\stackmorphism{A}^i}'"] & {\bmu'}\n (U_i),
\end{tikzcd}
\]
Choose weak inverses ${\stackmorphism{h}^i}\n$ for the equivalences $\stackmorphism{h}^i$, and define 
\[
\stackmorphism{\phi^{i,j}} = {\stackmorphism{h}^i}\n \circ \stackmorphism{h}^j\colon \bmu\n(U_{i,j})\to \bmu\n(U_{i,j})
\]
Notice that the morphisms $\stackmorphism{\phi}^{i,j}$ satisfy the weak cocycle conditions
\begin{equation}
\label{stackcocycle}
\stackmorphism{\phi}^{i,j}\circ \stackmorphism{\phi}^{j,i}\cong \id;\qquad \stackmorphism{\phi}^{ij}\circ \stackmorphism{\phi}^{j,k}\circ \stackmorphism{\phi}^{k,i}\cong \id.
\end{equation}
By Lemma \ref{automorphisms}, there is an automorphism $\phi^{i,j}_0\colon E^{i,j}_0\to E^{i,j}_0$ of Hamiltonian $G_0^i$-manifolds so that the diagram 2-commutes:
\begin{equation}
\label{phidiag}
\begin{tikzcd}
E^{i,j}_0 \arrow[r, "\phi_0^{i,j}"] \arrow[d, "A"]
& E^{i,j}_0 \arrow[d, "A" ] \\
\bmu\n(U_{i,j}) \arrow[r, "\stackmorphism{\phi}^{i,j}"]
&\bmu\n(U_{i,j})
\end{tikzcd}
\end{equation}
From the proof of Proposition 7.3 of \cite{lerman-tolman;hamiltonian-torus-actions-symplectic-orbifolds}, there is a smooth function $f^{i,j} \colon U_{i,j} \to \R$ so that the time 1 flow of the Hamiltonian vector field of $f^{i,j}\circ \mu_{E_0^{i,j}}$ is $\phi^{i,j}_0$.

Let $E_\bu^{k;i,j} =\mu_{E_\bu^k}\n(U_{i,j})$, let $U_{i,j,k} = U_{p_i}\cap U_{p_j}\cap U_{p_k}$, and let
\[
\phi^{k; i,j}_\bu \colon E^{k; i,j}_\bu \to E^{k;i,j}_\bu
\]
be the time 1 flow of the Hamiltonian vector field of $f^{i,j}\circ \mu_{E_\bu ^k}|_{E_\bu^{k;i,j}}$. Note that these Hamiltonian vector fields are well defined because $E_\bu$ is \'etale and hence $\omega_{E_1}$ and $\omega_{E_0}$ are both nondegenerate. Note also that the Hamiltonian vector fields of $f^{i,j}\circ \mu_{E_k^{i,j}}$ are parallel to fibers of $\mu_{E_k^{i,j}}$, for $k=0,1$. These fibers are unions of compact tori, and so the vector fields are complete.

 Then, the diagram 2-commutes:
\[
\begin{tikzcd}
E^{k; i,j}_0 \arrow[r, "\phi_0^{k; i,j}"] \arrow[d, "A"]
& E^{k; i,j}_0 \arrow[d, "A" ] \\
\bmu\n(U_{i,j,k }) \arrow[r, "\stackmorphism{\phi}^{i,j}"]
&\bmu\n(U_{i,j,k})
\end{tikzcd}
\]
From the cocycle conditions \eqref{stackcocycle}, we see that there are natural isomorphisms of maps of Lie groupoids
\[
\phi^{i;i,j}_\bu \circ \phi^{i;j,i}_\bu \cong \id;\qquad \phi^{i;i,j}_\bu \circ \phi^{i;j,k}_\bu \circ \phi ^{i;k,i}_\bu\cong \id.
\]
Since $E^{i;i,j}_0$ is connected and $E^{i;i,j}_\bu$ has the structure of an action groupoid of the discrete group $\partial(Q)/\Lambda^i$, for all $x\in E^{i;i,j}_0$ we have
\[
(\phi^{i;i,j}_0 \circ \phi^{i;j,i}_0 )(x) = \partial(q)\cdot x
\]
for some $q\in Q$. Similarly, for all $x\in E^{i;i,j}_0$,
\[
(\phi^{i;i,j}_0 \circ \phi^{i;j,k}_0 \circ \phi ^{i;k,i}_0)(x) =\partial(r)\cdot x.
\]
for some $r\in Q$. 
From this, there are constants $c,d\in \R$ and $q,r \in Q$, so that for all $x\in U_{i,j}$ and all $y\in U_{i,j,k}$, 
\[
f^{i,j}(x)+f^{j,i}(x) = c + \langle \partial(q), x \rangle ; \qquad f^{i,j}(y) + f^{j,k}(y) + f^{k,i}(y) = d + \langle \partial(r) , y \rangle.
\]
From a sheaf-theoretic argument as in the proof of Proposition 7.3 of \cite{lerman-tolman;hamiltonian-torus-actions-symplectic-orbifolds}, there are smooth functions $f^i\colon U_i\to \R$ so that for any $x\in U_{i,j}$,
\begin{equation}\label{coboundary}
f^i(x)-f^j(x) = f^{i,j}(x) + c+ \langle v, x \rangle
\end{equation}
for some $c\in \R$ and $v\in \partial(Q)$ which depend only on the indices $i,j$.

Now, taking pullbacks of \eqref{phidiag}, the diagram 2-commutes:
\[
\begin{tikzcd}
E^{i,j}_0 \arrow[r, "\phi_0^{i,j}"] & E^{i,j}_0 \arrow[d, "A'"]  \\
E^i_0 \times_\X E^j_0  \arrow[u] \arrow[d] & \X' \\
E^{j,i}_0 \arrow[r, equal]  & E^{j,i}_0 \arrow[u, "A'"]
\end{tikzcd}
\]
Let $\phi_0^i\colon E^i_0\to E^i_0$ be the time 1 flow of the Hamiltonian vector field of $f^i\circ \mu_{E_0^i}$. Then by \eqref{coboundary}, the diagram 2-commutes:
\[
\begin{tikzcd}
E^{i,j}_0 \arrow[r, "\phi_0^{i}"] & E^{i,j}_0 \arrow[d, "A'"]  \\
E^i_0 \times_\X E^j_0  \arrow[u] \arrow[d] & \X' \\
E^{j,i}_0 \arrow[r, "\phi_0^j"]  & E^{j,i}_0 \arrow[u, "A'"]
\end{tikzcd}
\]
So, there is a map $\phi^{i\times j}_1\colon E_0^i\times_\X E_0^j\to E_0^i\times_{\X'} E_0^j$ making the following diagram commute:
\begin{equation}
\label{twosquarediag}
\begin{tikzcd}
E^{i}_0 \arrow[r, "\phi_0^{i}"] & E^{i}_0  \\
E^i_0 \times_\X E^j_0  \arrow[u] \arrow[d] \arrow[r, "\phi^{i\times j}_1"]&E^i_0\times_{\X'} E^j_0 \arrow[d] \arrow[u] \\
E^{j}_0 \arrow[r, "\phi_0^j"]  & E^{j}_0 
\end{tikzcd}
\end{equation}
By construction, any choice of $\phi^{i\times j}_1$ respects the source and target maps:
\[
s(\phi^{i\times j}_1(\alpha))=\phi^i_0(s(\alpha)); \qquad t(\phi^{i\times j}_1(\alpha))=\phi^j_0(t(\alpha)).
\]
Recall that, due to Lemma \ref{removekernel}, we have assumed $\ker \partial =0$. In this case, by Theorem \ref{localnormalform}, an arrow $\alpha\in X_1$ in the regular locus of  $\mu_{X_1}$ (or an arrow $\alpha'$ in the regular locus of $\mu_{X_1'}$) is uniquely determined by its source $s(\alpha)$ and target $t(\alpha)$. By Definition \ref{toricdef}\eqref{cond2}, the regular locus of $\mu_{X_1}$ is dense in $X_1$ and so there are \emph{unique} maps $\phi^{i\times j}_1$ which fit into the diagram \eqref{twosquarediag}. These maps together respect the groupoid structure and $G_\bu$ action on $X_\bu$. By Theorem \ref{localuniqueness}, the maps $\phi^{i\times i}_1$ are diffeomorphisms. So, the $\phi^{i\times j}_1$ assemble to an isomorphism of Hamiltonian $G_\bu$ groupoids $X_\bu\cong X_\bu'$.\end{proof}

\section{Existence and deformations of compact toric symplectic stacks}
\label{section9}

In this section we study smooth deformations of stacky moment polytopes and toric stacks, depending on a parameter $\tau\in (0,1)$. By extending Delzant's construction in \cite{delzant;hamiltoniens-periodiques} we show that any deformation of stacky moment polytopes arises from a deformation of toric stacks. In particular, by taking the constant deformation of a moment polytope one finds that any stacky moment polytope comes from a toric stack. 

Deformations of Lie groupoids were introduced and studied in a more general context by Crainic, Mestre, and Struchiner~\cite{crainicDeformationsLieGroupoids2015}.

\begin{definition}
A \emph{deformation of stacky tori} is a bundle of stacky Lie groups
\[
p_\G \colon \G\to (0,1)
\]
so that $\G^\tau:=p_\G \n(\tau)$ is a stacky torus. More precisely, $\G\to (0,1)$ is a stacky Lie groupoid in the sense of Definition 3.1 of \cite{bursztyn-noseda-zhu;principal-stacky-groupoids}, with the additional property that the source and target maps are both equal to $p_\G$.
\end{definition}

We will consider deformations of stacky tori which arise in a particular way, as follows. Fix a finitely generated abelian group $Q$ and an isomorphism $Q \cong R\times \Z^m$, where $R$ is the torsion part of $Q$. Let $\partial^\bu_i\colon (0,1)\to \R^n, i\in \{1,\dots, m\}$, be smooth paths in $\R^n$. For a given $\tau\in (0,1)$, the vectors $\partial^\tau_i$ determine a homomorphism 
\[
\partial^\tau\colon Q\cong R\times \Z^m \to \R^n;\quad (r,x_1,\dots,x_m)\mapsto x_1 \partial^\tau_1+\cdots +x_m \partial^\tau_m
\] 
and hence a stacky torus. We therefore have a deformation of stacky tori, presented by the action groupoid
\[
Q\ltimes (\R^n\times (0,1)) \rightrightarrows \R^n \times (0,1),
\]
where the action of $Q$ on $\R^n \times (0,1)$ is
\[
h\cdot (x,\tau) = (\partial^\tau(h) + x, \tau); \qquad h\in Q,~(x,\tau)\in \R^n\times (0,1).
\]
 We will denote this deformation by $\B(\partial^\bu\colon Q\to \R^n)$. Notice that the Lie algebra of the stacky torus $\B(\partial^\tau\colon Q\to \R^n)$ is constant with respect to $\tau$.

\begin{definition}
Fix a deformation of stacky tori $\G = \B(\partial^\bu\colon Q\to \R^n)$, with Lie algebra $\g=\Lie(\G^\tau)$. A \emph{deformation of stacky moment polytopes} is a 1-parameter family
\[
(\Delta^\tau,\G^\tau, \{\Lambda_{f}^\tau\}_{f\in F}, \{q_f\}_{f\in F^{max}}, \{L_f^\tau\}_{f\in F^{\max}});\quad \tau\in (0,1)
\]
so that $(\Delta^\tau,\G^\tau , \{\Lambda_{f}^\tau\}_{f\in F})$ is a decorated stacky moment polytope at any given $\tau$, and which satisfies the following conditions:
\begin{enumerate}
\item The set of $F$ labeling the faces of $\Delta^\tau$ is constant with respect to $\tau$. 
\item For each facet $f\in F^{max}$, there is a fixed $q_f\in Q$, so that so that at a given $\tau\in (0,1)$, the group $\Lambda_f^\tau$ is generated by $\lambda_f^\tau:= \partial^\tau(q_f)$.
\item For each facet $f\in F^{max}$, there is a smooth function $L_f^\bu \colon (0,1)\to \R$ so that the polytope $\Delta^\tau$ is cut out by inequalities
\[
\Delta^\tau = \{ \xi \in \g^* \mid \langle \xi, \lambda_f^\tau \rangle \le L_f^\tau, \forall f\in F^{max} \}
\]
at a given $\tau\in (0,1)$. 
\end{enumerate}
\end{definition}

By perturbing slightly the quasilattice of a stacky moment polytope, we immediately have the following.

\begin{proposition}
\label{deformprop}
Any decorated stacky moment polytope $(\Delta, \B(\partial\colon Q\to \R^n), \{\Lambda_f\}_{f\in F})$ admits a deformation to a decorated stacky moment polytope $(\Delta', \B(\partial'\colon Q\to \R^n), \{\Lambda_f'\}_{f\in F})$, so that $\Delta'$ is rational with respect to $\Z^n\subset \R^n$, and the image of $\partial'$ is $\Z^n$.\end{proposition}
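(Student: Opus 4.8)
The plan is to perturb the quasilattice slightly to a rational one, then apply a rational linear change of coordinates normalizing its image to $\Z^n$, and finally splice the two moves into a single deformation.

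I would begin with combinatorial bookkeeping. Since $\Delta$ is simple, for every facet $f\in F^{\max}$ the space $\ann(f)\subset\g$ is a line, so $\Lambda_f\subset\partial(Q)$ is infinite cyclic; fix a generator and a preimage $q_f\in Q$ with $\partial(q_f)$ equal to that generator, the sign chosen so that $\partial(q_f)$ is the outward conormal, and write
\[
\Delta=\{\,\xi\in\g^*\mid\langle\xi,\partial(q_f)\rangle\le L_f,\ f\in F^{\max}\,\}
\]
for suitable $L_f\in\R$. By the remark following Definition~\ref{stackypoly} the triple $(\Delta,\{\Lambda_f\}_{f\in F})$ is determined by $\{(\partial(q_f),L_f)\}_{f\in F^{\max}}$. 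The one standard input I would invoke is the stability of simple polytopes: the set of tuples $(\mu_f,c_f)_{f\in F^{\max}}\in(\g^*\times\R)^{F^{\max}}$ for which $\bigcap_f\{\langle\cdot,\mu_f\rangle\le c_f\}$ is a simple polytope with face poset $F$ and with each $\mu_f$ the outward conormal of the facet corresponding to $f$ is open and contains $\{(\partial(q_f),L_f)\}_f$; this is elementary, since vertices depend smoothly on the data by Cramer's rule and the non-incident facet inequalities at each vertex are strict. On this open set, assigning to $(\mu_f,c_f)_f$ the polytope equipped with facet lattices $\Z\mu_f$ yields a decorated stacky moment polytope: condition~(2) of Definition~\ref{stackypoly} holds by construction (for a simple polytope the closure of a face is cut out by exactly the facets containing it), and condition~(1) for higher faces follows because facets through a common vertex have linearly independent conormals.

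Next, write $Q\cong R\times\Z^m$ with $\partial$ carrying the standard generators of $\Z^m$ to vectors $\partial_1,\dots,\partial_m$ spanning $\R^n$ (the torsion $R$ maps to $0$). Since $\Q^n$ is dense in $\R^n$, choose $\partial_1'',\dots,\partial_m''\in\Q^n$ as close as desired to $\partial_1,\dots,\partial_m$; the induced $\partial''\colon Q\to\R^n$ sends each $q_f$ to $\lambda_f'':=\partial''(q_f)$, a fixed integral combination of the $\partial_i''$ depending continuously on them and equal to $\partial(q_f)$ when $\partial''=\partial$. By the stability statement, once $\partial''$ is close enough to $\partial$ the tuple $\{(\lambda_f'',L_f)\}_f$ still lies in the good open set, giving a decorated stacky moment polytope $(\Delta'',\B(\partial''\colon Q\to\R^n),\{\Z\lambda_f''\}_{f\in F})$ with the same face poset as $\Delta$. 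Now $\partial''(Q)=\sum_i\Z\partial_i''$ lies in $\tfrac1N\Z^n$ for a common denominator $N$ and spans $\R^n$, so it is a lattice of rank $n$; pick $g\in GL_n(\Q)$ with $g(\partial''(Q))=\Z^n$, and, after composing with an orientation-reversing element of $GL_n(\Z)$ if necessary, assume $\det g>0$. Put $\partial'=g\circ\partial''$, $\Delta'=(g^*)^{-1}(\Delta'')$, $\Lambda_f'=g(\Z\lambda_f'')$. Then $(\id_Q,g)$ is an isomorphism of the associated Lie $2$-groups, hence an equivalence of stacky tori, and it promotes to an isomorphism of decorated stacky moment polytopes $(\Delta'',\B\partial'',\{\Z\lambda_f''\})\cong(\Delta',\B\partial',\{\Lambda_f'\})$; moreover $\partial'(Q)=\Z^n$ and the conormals $g(\lambda_f'')\in\Z^n$ exhibit $\Delta'$ as rational with respect to $\Z^n$. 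Finally I would assemble the deformation over $(0,1)$: on $(0,1/2]$ take $\partial^\tau=(1-\sigma(\tau))\partial+\sigma(\tau)\partial''$ with a smooth $\sigma$ that is $0$ near $0$ and $1$ near $1/2$, keeping offsets $L_f^\tau\equiv L_f$; on $[1/2,1)$ take $\partial^\tau=g_\tau\circ\partial''$ for a smooth path $g_\tau$ in $GL_n(\R)$ that is $\id$ near $1/2$ and $g$ near $1$ (it exists since $\det g>0$), again keeping $L_f^\tau\equiv L_f$. In both ranges $\lambda_f^\tau:=\partial^\tau(q_f)$ is the outward conormal of the facet corresponding to $f$ of $\Delta^\tau=\{\langle\cdot,\lambda_f^\tau\rangle\le L_f^\tau\}$ (a small perturbation, resp.\ an invertible-linear image, of $\Delta$), the facet lattices are $\Lambda_f^\tau=\Z\lambda_f^\tau$ with the fixed $q_f$, and the face poset is constantly $F$; this is a deformation of decorated stacky moment polytopes equal to $(\Delta,\B(\partial\colon Q\to\R^n),\{\Lambda_f\})$ for $\tau$ near $0$ and to $(\Delta',\B(\partial'\colon Q\to\R^n),\{\Lambda_f'\})$ for $\tau$ near $1$.

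The main obstacle is the demand that the image be \emph{exactly} $\Z^n$: one cannot perturb the $\partial_i$ directly into the discrete set $\Z^n$, so the perturbation must be taken into $\Q^n$ and then corrected by the rational rescaling $g$, and realizing $g$ inside a deformation forces the orientation bookkeeping $\det g>0$. The other point requiring care, though routine, is the stability lemma for simple polytopes, which must be stated precisely enough that the perturbed data genuinely satisfies every axiom of Definition~\ref{stackypoly}.
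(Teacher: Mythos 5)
Your proposal is correct, and it is considerably more explicit than what the paper provides: the paper disposes of Proposition~\ref{deformprop} with the single remark that one obtains it ``by perturbing slightly the quasilattice,'' whereas you correctly observe that a slight perturbation alone only achieves rationality (it places $\partial''(Q)$ in some $\tfrac1N\Z^n$, as in Example~\ref{nudgetriangle}, not in $\Z^n$ itself), and that normalizing the image to be \emph{exactly} $\Z^n$ requires the further non-small move by $g\in GL_n(\Q)$, realized inside the deformation by a path in $GL_n^+(\R)$ after the $\det g>0$ adjustment. Your two-stage splicing is compatible with the paper's definition of a deformation (fixed $q_f$, smooth $\partial^\tau_i$, constant $L_f^\tau$), the second stage preserves simplicity and the face poset automatically since it is an invertible linear change of coordinates, and the quasilattice condition along the whole path follows from the linear independence of the facet conormals at any vertex of the simple polytope $\Delta^\tau$. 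The only point I would make explicit is in the first stage: you need not just that the endpoint tuple $\{(\lambda_f'',L_f)\}$ lies in the good open set of your stability lemma, but that the entire interpolated segment does; this is immediate once $\partial''$ is chosen inside a convex (ball) neighborhood of $\partial$ contained in that open set, so it is a one-line addition rather than a gap. With that said, your argument is complete and supplies the details the paper leaves to the reader.
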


\begin{example}[Figure \ref{examplefig}]
\label{nudgetriangle}
 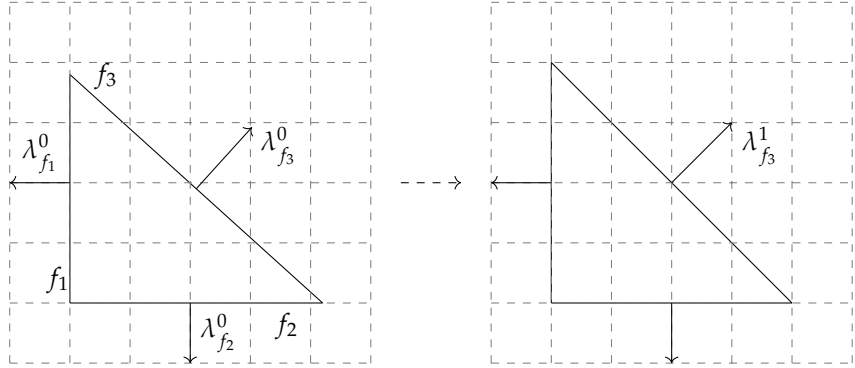
\begin{figure}[htp]\label{examplefig}
\centering
\begin{tikzpicture}[scale=1.6]	
\draw (0,0) -- (0,1.9);
\draw (-.1,0) node[align=right, above] {$f_1$};
\draw (0,1.9) -- (2.1,0);
\draw (.3,1.7) node[align=right, above] {$f_3$};
\draw (0,0) -- (2.1,0);
\draw (1.8,0) node[align=right, below] {$f_2$};
\draw[step=.5cm,gray,dashed] (-.5,-.5) grid (2.5,2.5);
\draw[->] (0,1) -- (-.5,1);
\draw (-.25,1) node[align=right, above] {$\lambda_{f_1}^0$};
\draw[->] (1,0) -- (1,-.5);
\draw (1,-.25) node[align=right, right] {$\lambda_{f_2}^0$};
\draw[->] (1.05,.95) -- (1.05+0.67091/1.45,.95+0.74153/1.45);
\draw (1.5,1.3) node[align=right, right] {$\lambda_{f_3}^0$};
%\draw (2.1,0) circle (.3);
%%%%%
\draw[dashed,->] (2.75,1) -- (3.25,1);
%%%%%
\begin{scope}[shift={(4,0)}]
\draw (0,0) -- (0,2);
\draw (0,2) -- (2,0);
\draw (0,0) -- (2,0);
\draw[step=.5cm,gray,dashed] (-.5,-.5) grid (2.5,2.5);
\draw[->] (0,1) -- (-.5,1);
\draw[->] (1,0) -- (1,-.5);
\draw[->] (1,1) -- (1.5,1.5);
\draw (1.5,1.3) node[align=right,right] {$\lambda_{f_3}^1$};
\end{scope}
	\end{tikzpicture}	
	\caption{Example \ref{nudgetriangle}. The normal vector $\lambda_{f_3}^\tau$ to the side $f_3$ is smoothly nudged, taking an irrational polytope to a rational one. The grid is the integer lattice $\Z^2$.}
\label{fig 0}
\end{figure}
Let $n=2$ and $Q=\Z^3$, with standard generators $e_1, e_2, e_3$ sent to $\partial_1^\tau,\partial_2^\tau,$ and $\partial_3^\tau$, respectively. We describe a deformation of stacky moment polytopes where $\Delta$ is a right triangle, with sides labeled $f_1,f_2,f_3$. Let $q_{f_i} = e_i$, and fix the normal vectors
\[
\partial_1^\tau = \lambda_{f_1}^\tau = -(1,0)\in \R^2; \qquad \partial_2^\tau = \lambda_{f_2}^\tau = -(0,1)\in \R^2.
\]
Let $L_{f_1}^\tau =L_{f_2}^\tau = 0$, and let $L_{f_3}^\tau>0$ be constant. The third side's normal vector $\partial_3^\tau= \lambda^\tau_{f_3}\in \R^2_{>0}$ will vary smoothly in $\tau$. By picking a smooth path from an irrational $\partial_3^0 \in (\R^2_{>0})\backslash \Q_{>0}^2$ to a rational $\partial_3^1 \in \Q_{>0}^2$ we get a deformation from an irrational triangle to a rational triangle. Note however we have $Q=\Z^3$ throughout.
\end{example}

\begin{definition}
Fix a deformation of stacky tori $\G=\B(\partial^\bu\colon Q\to \R^n)$, with Lie algebra $\g=\Lie(\G^\tau)$. A \emph{deformation of toric stacks} is a tuple
\[
(\X, p_\X \colon \X \to (0,1),\G, \stackmorphism{\pi},  \bmu\colon \X \to \g^*),
\]
where $\X$ is a $(2n+1)$-dimensional \'etale stack, $p_\X\colon \X \to (0,1)$ is a surjective submersion as in \cite[Definition 2.9]{bursztyn-noseda-zhu;principal-stacky-groupoids}, there is a left action $\G\times_{(0,1)} \X \to \X$ of $\G$ on $\X$ as in \cite[Definition 3.15]{bursztyn-noseda-zhu;principal-stacky-groupoids}, and $\stackmorphism{\pi}$ is a bivector field on $\X$. We impose the following requirements. First, for each $\tau\in (0,1)$, the bivector field $\stackmorphism{\pi}$ restricted to the fiber $\X^\tau = p\n_\X(\tau)$ must take values in $\wedge^2T\X^\tau$. Second, for each $\tau\in (0,1)$, there must exist a symplectic form $\bomega^\tau\in \Omega^2(\X^\tau)$ so that $\tilde{\bomega}^\tau = (\tilde{\stackmorphism{\pi}}|_{\X^\tau})\n$, where
\[
\tilde{\stackmorphism{\pi}}|_{\X^\tau}\colon \Omega^1(\X^\tau)\to \Vect(\X^\tau); \quad\tilde{\bomega}^\tau \colon \Vect(\X^\tau) \to \Omega^1(\X^\tau)
\]
are the linear maps induced by contraction with $\stackmorphism{\pi}|_{\X^\tau}$ and $\bomega^\tau$, respectively. Finally, the tuple 
\[
(\X^\tau,\bomega^\tau, \B(\partial^\tau\colon Q\to \R^n), \bmu^\tau = \bmu|_{\X^\tau})
\] must be a compact toric symplectic stack.
\end{definition}

\begin{theorem}
\label{deformations}
Let $\G=\B(\partial^\bu\colon Q\to \R^n)$ be a deformation of stacky tori, with Lie algebra $\g=\Lie(\G^\tau)\cong \R^n$. Let $(\Delta^\tau,\G^\tau, \{\Lambda_{f}^\tau\}_{f\in F}, \{q_f\}_{f\in F^{max}}, \{L_f\}_{f\in F^{\max}})$ be a deformation of stacky moment polytopes. Then, there exists a deformation of toric stacks
\[
(\X, p_\X \colon \X \to (0,1),\G, \stackmorphism{\pi},  \bmu\colon \X \to \g^*)
\]
so that for any $\tau\in(0,1)$, the decorated stacky moment polytope of $(\X^\tau,\bomega^\tau, \G^\tau, \bmu^\tau )$ is $(\Delta^\tau,\G^\tau, \{\Lambda_{f}^\tau\}_{f\in F})$.
\end{theorem}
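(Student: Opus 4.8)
The plan is to carry out Delzant's construction in a single family over the parameter interval, and then read off its properties fiberwise. Fix $d=|F^{max}|$ and enumerate the facets $f_1,\dots,f_d$. For each $\tau$ the vectors $\lambda_{f_j}^\tau=\partial^\tau(q_{f_j})$ define a linear map $\beta^\tau\colon\R^d\to\R^n=\Lie(\G^\tau)$, $e_j\mapsto\lambda_{f_j}^\tau$; since $\Delta^\tau$ is simple and the facet normals at any vertex form an $\R$-basis of $\g$ (Definition~\ref{stackypoly}), $\beta^\tau$ is surjective and $\mathfrak{k}^\tau:=\ker\beta^\tau$ has dimension $d-n$. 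The one subtlety not present in the manifold case is that $\G$ is \emph{not} recovered from $\Delta^\tau$ together with the facet data --- the homomorphism $q\colon\Z^d\to Q$, $e_j\mapsto q_{f_j}$, need not be surjective --- so the ambient space must be enlarged: I would pass to $\C^{d+\ell}\times(0,1)$ with the fiberwise standard symplectic forms, carrying a fiberwise Hamiltonian action of a ($\tau$-constant) stacky torus $\widehat\G$ whose effective quotient is the torus $\T^{d+\ell}$ and which comes with a surjection $\widehat\G\to\G$ covering $\beta^\tau$ on Lie algebras; the auxiliary factors are there so that $\pi_1(\widehat\G)$ surjects onto $Q$, and the torsion of $\pi_1(\widehat\G)$ is arranged to match. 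The shifted moment map for $\widehat\G$ uses the numbers $L_{f_j}^\tau$ on the first $d$ slots and large fixed constants on the auxiliary slots, chosen large enough that the corresponding ``facets'' never become active on the reduced level set. Write $\KK:=\ker(\widehat\G\to\G)$; it is a bundle of stacky tori over $(0,1)$ with $\Lie(\KK^\tau)=\mathfrak{k}^\tau$ (together with the auxiliary directions).

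Next I would reduce. The $\KK$-moment map is the composite of the $\widehat\G$-moment map with restriction to $\Lie(\KK^\tau)$; simplicity of the polytope again makes $0$ a regular value, both fiberwise and for the total map, so its zero set $\mathcal{Z}\subset\C^{d+\ell}\times(0,1)$ is a submanifold, the $\KK$-action on $\mathcal Z$ has discrete stabilizers, and one sets $\X:=[\mathcal{Z}/\KK]$, meaning the stacky quotient of a presenting manifold of $\mathcal Z$ by a presenting Lie $2$-group of $\KK$. The projection to $(0,1)$ is then the required surjective submersion $p_\X$; $\mathcal Z$ is compact over each bounded $\Delta^\tau$, so $\X^{\tau,\coarse}$ is compact and connected; and the residual action of $\G=\widehat\G/\KK$ on $\X$ is produced by Theorem~\ref{appAthm}. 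The fiberwise reduced symplectic forms $\bomega^\tau$ and the maps $\bmu^\tau$ (induced by the $\widehat\G$-moment map via the identification $\Lie(\KK^\tau)^\perp\cong\g^*$) assemble --- smoothly in $\tau$, since the construction is --- into the bivector field $\stackmorphism{\pi}$ of a deformation of toric stacks, and a standard Delzant computation gives $\bmu^\tau(\X^\tau)=\Delta^\tau$.

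It then remains to verify the fiberwise properties. Conditions~\eqref{cond1} and~\eqref{cond4} of Definition~\ref{toricdef} are the dimension count $\dim\mathcal Z^\tau-\dim\KK^\tau=2n$ and the compactness just noted; \eqref{cond2} holds because the regular values of $\bmu^\tau$ are the complement in $\Delta^\tau$ of the facet hyperplanes. Conditions~\eqref{cond5} and~\eqref{cond6} --- transitivity of $\G$ on the reduced fibers and freeness on the regular ones --- are the stacky counterparts of the two standard facts of toric reduction, checked on the presenting groupoids using that reducing $\C^{d+\ell}$ at a regular level of $\KK$ leaves a free residual action over the regular locus. Finally, for the labels: by Theorem~\ref{localnormalform} a local model of $\X^\tau$ near a point of the open face $f$ is a piece of a standard effective toric manifold times the gerbe $\B(\ker\partial\rightrightarrows\star)$, with the lattice $\Lambda^\tau$ of Proposition~\ref{initiallocal} read off from the reduction's isotropy; unwinding this identifies $\Lambda^\tau\cap\ann(f)$ with $\langle\lambda_f^\tau\rangle=\Lambda_f^\tau$, which is the definition~\eqref{polytopelabel}.

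I expect the main obstacle to lie in the genuinely stacky, non-rational reduction: $\mathfrak{k}^\tau\subset\R^{d+\ell}$ is not a rational subspace, $\KK^\tau$ is not a closed subgroup of $\T^{d+\ell}$, so ``$\mathcal Z/\KK$'' must be carried out with Lie $2$-groups acting on manifolds, and one must (i) arrange $\widehat\G$ so that the residual stacky torus is exactly $\G$ and not merely the sub-stacky-torus generated by the facet normals, (ii) check that the auxiliary coordinates and their shifts can be chosen without disturbing the reduced space or its moment image, and (iii) obtain the residual $\G$-action and the reduced symplectic and moment data via Theorem~\ref{appAthm} compatibly over all of $(0,1)$. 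Verifying~\eqref{cond6} --- freeness of the residual $\G$-action on regular reduced fibers --- is precisely where the decorated-polytope axioms (the lattices $\Lambda_f$) are used, just as the smoothness hypothesis is used in Delzant's theorem and the integer labels in Lerman--Tolman's. One could instead glue the local models of Theorem~\ref{localnormalform} directly, using a \v{C}ech argument as in the proof of Theorem~\ref{globaluniqueness}, but the reduction construction is more economical for a whole family.
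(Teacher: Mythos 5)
Your strategy is the same as the paper's: run Delzant's construction in the family, i.e.\ take the standard Poisson structure on $\C^{d}\times(0,1)$ (with $d$ the number of facets) and the shifted moment components $-\pi|z_j|^2+L_{f_j}^\tau$, restrict to a level set, take a stacky quotient, and read the labels off the isotropy of the reduction. The difference is the group-theoretic bookkeeping. The paper never enlarges the ambient space and never forms a kernel: it pulls the quasilattice presentation of $\G$ back along $\lambda^\tau\colon\R^d\to\R^n$, $e_i\mapsto\lambda^\tau_{f_i}$, quotients by $\Z^d\times\Z^d$ to get a crossed module $\alpha^\tau\colon N^\tau\to\T^d$ presenting $\G^\tau$ (the group $N^\tau$ already carries $\ker\partial$ and the failure of the $q_{f_i}$ to generate $Q$), \emph{defines} $\X^\tau$ as the action groupoid $N^\tau\ltimes X^\tau$ on the level set $X^\tau=\{\Lie(\alpha^\tau)^*\circ\mu_M=0\}\subset\C^d$, writes the action of the presenting $2$-group $N^\tau\ltimes\T^d$ on this groupoid explicitly, and computes $\Lambda^\tau_v=\bigoplus_i\Z\lambda^\tau_i$ via an explicit section of the Morita morphism of $2$-groups. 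Your route --- auxiliary coordinates $\C^\ell$, a gerby torus (call it $\HH$) with $\pi_1(\HH)$ surjecting onto $Q$, and reduction by $\KK=\ker(\HH\to\G)$ --- is a Cox-type variant that can be made to work, but the auxiliary coordinates buy nothing here, since the paper's $N$ absorbs exactly the data you introduce them for.

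Two points in your sketch need repair. First, $\KK$ is \emph{not} a bundle of stacky tori: precisely because $\mathfrak{k}^\tau$ is irrational, the homotopy kernel is a non-compact stacky group (typically containing an $\R^{d+\ell-n}$ factor sitting non-closed in $\T^{d+\ell}$, as in Prato's quasifold picture); this is why the quotient must be presented by an action groupoid rather than performed as an ordinary reduction, so the claim is false as stated though harmless once rephrased. Second, Theorem~\ref{appAthm} cannot \emph{produce} the residual $\G$-action: every part of that theorem presupposes an action of $\G$ on the stack already given, so you still owe the construction of the descended action of $\HH/\KK\simeq\G$ on $[\mathcal{Z}/\KK]$ --- exactly the step the paper short-circuits by writing down the crossed module $\alpha^\tau\colon N^\tau\to\T^d$ and its action on $N^\tau\ltimes X^\tau$ by hand; once you do this in your setting you essentially reproduce the paper's proof with $\ell$ superfluous coordinates. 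A minor further point: your ``large fixed constants'' on the auxiliary slots must be allowed to depend smoothly on $\tau$, since nothing bounds the family $\Delta^\tau$ uniformly over $(0,1)$. Your final step, identifying $\Lambda^\tau\cap\ann(f)$ with the subgroup generated by $\lambda^\tau_f$ through the local model, matches the paper's verification.
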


\begin{proof}
Let $d=|F^{max}|$ be the number of facets of $\Delta^\tau$, and enumerate the facets $f_1,\dots,f_d$. Let
$q_i := q_{f_i},$ and let $\lambda^\tau_i:= \lambda^\tau_{f_i} =\partial^\tau(q_i).$
Consider the Morita morphism of Lie groupoids
\[
\begin{tikzcd}
\lambda^* (Q\ltimes (\R^n\times (0,1))) \arrow[d, shift right] \arrow[d,shift left] \arrow[r]
& Q\ltimes (\R^n\times (0,1)) \arrow[d, shift right] \arrow[d,shift left] \\
\R^d\times (0,1) \arrow[r, "\lambda"]
& \R^n\times (0,1)
\end{tikzcd}
\]
where, for the standard basis vectors $e_1,\dots,e_d$ of $\R^n$, we define
\[
\lambda\left(\sum_i a_i e_i, \tau\right) = \left(\sum_i a_i \lambda^\tau_i, \tau\right).
\]
We will sometimes write $\lambda^\tau(x) = \lambda(x,\tau)$. At a fixed $\tau\in (0,1)$, this map of Lie groupoids restricts to a Morita morphism of Lie 2-groups $\lambda^*(Q\ltimes(\R^n\times \{\tau\}))\to Q\ltimes (\R^n \times \{\tau\})$.

Consider the bundle (over $(0,1)$) of Lie 2-groups, which is the direct product of the pair groupoid $\Z^d \times \Z^d \rightrightarrows \Z^d $ and the trivial groupoid $(0,1)\rightrightarrows (0,1)$. There is a naturally defined embedding of this bundle of Lie $2$-groups into $\lambda^*(Q\ltimes(\R^n\times(0,1)))$. On arrows, the embedding is
\begin{align*}
 \Z^d\times \Z^d\times (0,1) & \hookrightarrow \left(\R^d\times (0,1)\right)\times_{\lambda, \R^n\times (0,1), s} \Big(Q \ltimes(\R^n\times (0,1)) \Big)\times_{t, \R^n\times(0,1),\lambda}\left( \R^d\times (0,1)\right) \\
 \left(\sum_i a_i e_i, \sum_i b_i e_i, \tau\right) &\mapsto \left(\left(\sum_i a_ie_i, \tau\right), \left( \sum_i (b_i-a_i)q_i , \sum_i a_i \lambda_i^\tau ,\tau\right), \left(\sum_i b_i e_i, \tau\right)
\right).
\end{align*}
For a given $\tau$, the image of $\Z^d\times \Z^d\times\{\tau\}\rightrightarrows \Z^d\times\{\tau\}$ is a Lie 2-subgroup of $\lambda^*(Q\ltimes (\R^n\times\{\tau\}))$.
The quotient 
\[
\begin{tikzcd}
 \lambda^* (Q\ltimes (\R^n\times (0,1)))/(\Z^d\times \Z^d\times (0,1)) \arrow[d, shift right] \arrow[d,shift left] \\
 \R^d/ \Z^d \times (0,1)
\end{tikzcd}
\]
is isomorphic to the bundle (over $(0,1)$) of Lie 2-groups $N\times_{(0,1)} ( \R^d/\Z^d \times (0,1))\rightrightarrows \R^d/\Z^d\times (0,1)$. 
Here, 
\[
N=s\n(0,(0,1))\subset \lambda^*  (Q\ltimes (\R^n\times (0,1)))/(\Z^d\times \Z^d\times (0,1))
\]
 is a smooth manifold with a submersion $p_N\colon N\to (0,1)$. Each fiber $N^\tau = p_N\n(\tau)$ is a Lie subgroup of $ \lambda^*(Q\ltimes (\R^n\times \{\tau\})))/(\Z^d\times \Z^d\times \{\tau\})$, and so for each $\tau$ the target map $t$ restricts to a Lie group homomorphism $\alpha^\tau \colon N^\tau \to \R^d/\Z^d$. At a fixed $\tau\in (0,1)$, the subgroupoid of $N\times_{(0,1)} ( \R^d/\Z^d \times (0,1))\rightrightarrows \R^d/\Z^d\times (0,1)$ over $\tau$ has the following Lie 2-group structure:
 \[
\Big( N\times_{(0,1)} ( \R^d/\Z^d \times \{\tau\})\rightrightarrows \R^d/\Z^d\times \{\tau\} \Big) \cong \Big( N^\tau \ltimes R^d/\Z^d \rightrightarrows \R^d/\Z^d \Big).
 \]
 Let $\mathfrak{n}^\tau = \Lie(N^\tau)$ for some $\tau\in (0,1)$. Its isomorphism type is constant in $\tau$. 
 
 The bundle of Lie 2-groups $N\times_{(0,1)} (\R^d/\Z^d\times (0,1))\rightrightarrows \R^d/\Z^d \times (0,1)$ is a groupoid presentation of $\G$. What's more, the map $\alpha^\tau$ is an immersion, and so for a fixed $\tau\in (0,1)$ we have a short exact sequence
\[
\begin{tikzcd}
0 \arrow[r] & \mathfrak{n}^\tau \arrow[r, "\Lie(\alpha^\tau)"] & \Lie(\R^d) \arrow[r,"\Lie(\lambda^\tau)"] & \g\arrow[r] & 0
\end{tikzcd}
\]
For any $\tau$, there is a canonical isomorphism $\Lie(\R^d)/\mathfrak{n}^\tau\cong \g$. Dually, we have the short exact sequence
\[
\begin{tikzcd}
0 \arrow[r] & \g^* \arrow[r, "\Lie(\lambda^\tau)^*"] & \Lie(\R^d)^* \arrow[r,"\Lie(\alpha^\tau)^*"] & (\mathfrak{n}^\tau)^* \arrow[r] & 0
\end{tikzcd}
\]
 and the isomorphism $\ann(\mathfrak{n}^\tau)\cong \g^*$.
 
Consider the Poisson manifold $M=\C^d\times (0,1)$, with coordinates $z_1,\overline{z}_1,\dots, z_d,\overline{z}_d,\tau$, and bivector field
\[
\pi_M = 2i \sum_{j=1}^d \frac{\partial}{\partial z_j} \wedge \frac{\partial}{\partial\overline{z}_j}.
\]
Let $p_M\colon M\to (0,1)$ be the projection to the last coordinate.
There is a Hamiltonian action of $\R^d/\Z^d$ on $M$,
\[
(a_1,\dots, a_d)\cdot (z_1,\dots,z_d,\tau) = (e^{2\pi ia_1} z_1,\dots, e^{2\pi i a_d} z_d, \tau)
\]
with moment map
\[
\mu_M (z_1,\dots,z_d,\tau) = \sum_j (-\pi |z_j|^2+L_j^\tau) e_j^*,
\]
where the vectors $\{e_i^*\}$ are the dual to the standard basis $\{e_i\}\subset \g\cong \R^n$. 

Let 
\[
X = \{ (z,\tau) \in M \mid \Lie(\alpha^\tau)^*\circ \mu_M(z,\tau) = 0 \}.
\]
Then $X$ is a smooth manifold and the projection $p_X = p_M|_X \colon X\to (0,1)$ is a proper submersion. There is an action of $N\to (0,1)$ on $X\to (0,1)$, which is 
\begin{align*}
N\times_{(0,1)} X & \to X\\
(n,\tau)\cdot (z,\tau) & =( \alpha^\tau(n)\cdot z,\tau).
\end{align*}
Let $X^\tau=p_X\n(\tau)$. Consider the groupoid $N \times_{(0,1)} X\rightrightarrows X$, which over a fixed $\tau\in (0,1)$ is the action groupoid $N^\tau\ltimes X^\tau\rightrightarrows X^\tau$. We consider $N\times_{(0,1)} X\rightrightarrows X$ as a bundle of Lie groupoids over $(0,1)$, with structure map $p_X\colon X \to (0,1)$. There is an action of the bundle of Lie 2-groups $N\times_{(0,1)} (\R^d/\Z^d \times(0,1))\rightrightarrows (\R^d/\Z^d \times (0,1))$ on $N\times_{(0,1)} X\rightrightarrows X$. On arrows, the action is
\begin{align*}
(N\ltimes(\R^d/\Z^d \times(0,1)))\times_{(0,1)}(N\times_{(0,1)} X) & \to N\times_{(0,1)} X \\(
(n, g,\tau) , (n',z,\tau)) & \mapsto  (nn', g\cdot z, \tau).
\end{align*}
Then, it is straightforward to check that the Lie groupoid $N\times_{(0,1)} X\rightrightarrows X$, together with the map $p_X$, the action of  $N\times_{(0,1)} (\R^d/\Z^d\times(0,1))\rightrightarrows \R^d\times (0,1)$, the $N$-invariant bivector $\pi_M|_X$, and the moment map $\mu_X= \mu_M|_X$ present a deformation of toric stacks $(\X, p_\X, \G, \stackmorphism{\pi}, \bmu)$. 

For a given $\tau$, the stacky moment polytope of $\X^\tau$ is $(\Delta^\tau,\G^\tau,\{\Lambda^\tau_f\}_{f\in F})$. We will verify that the lattices given by \eqref{polytopelabel} coincide with the $\Lambda_f^\tau$. We leave the rest of the verification (which is similar to the verification in \cite{lerman-tolman;hamiltonian-torus-actions-symplectic-orbifolds} and \cite{delzant;hamiltoniens-periodiques}) to the reader. It is enough to consider the lattices $\Lambda_v^\tau$ for vertices $v\in V$ of $\Delta^\tau$; without loss of generality assume $v=\overline{f_1}\cap \overline{f_2}\cap \cdots \cap \overline{f_n}$. Consider the linear section $\sigma_0\colon \R^n\to \R^d$ of $\lambda^\tau$ which is given on generators by $\lambda_i^\tau \mapsto e_i$, for $i=1,\dots, n$. Then together with
\begin{align*}
\sigma_1\colon Q\ltimes (\R^n \times \{\tau\}) & \to \left(\R^d\times \{\tau\}\right)\times_{\lambda, \R^n\times (0,1), s} \Big(Q \ltimes(\R^n\times \{\tau\}) \Big)\times_{t, \R^n\times(0,1),\lambda}\left( \R^d\times \{\tau\}\right) \\
(q,x,\tau) & \mapsto \Big((\sigma_0(x),\tau),(q,x,\tau),(\sigma_0(\partial^\tau(q)+x),\tau)\Big),
\end{align*}
the Lie 2-group morphism $\sigma_\bu\colon Q\ltimes (\R^n \times \{\tau\}) \to (\lambda^\tau)^*(Q\ltimes (\R^n\times \{\tau\}))$ is a Morita morphism of Lie 2-groups. Since the action of $(\lambda^\tau)^*(Q\ltimes (\R^n\times \{\tau\}))$ on $N^\tau\ltimes X^\tau\rightrightarrows X^\tau$ is Hamiltonian, by applying $\sigma_\bu$ to $Q\ltimes \R^n = Q\ltimes (\R^n\times\{\tau\})$, one arrives at a Hamiltonian action of $Q\ltimes \R^n\rightrightarrows \R^n$ on $N^\tau\ltimes X^\tau \rightrightarrows X^\tau$. This presents the action of $\G^\tau$ on $\X^\tau$.

Let $x\in \mu_X\n(v)\cap X^\tau$, then
\[
\sigma_0(\R^n) = \bigoplus_{i=1,\dots, n} \R e_i \subset \stab_{\R^d} (x).
\]
The point $x\in X^\tau$ is then stabilized under this action of $\R^n$. So we may restrict the action of $Q\ltimes \R^n \rightrightarrows \R^n$  on $N^\tau\ltimes X^\tau \rightrightarrows X^\tau$ to the subgroupoid $(N^\tau\ltimes X^\tau)|_{x} = (\stab_{N_\tau}(x) \ltimes \{x\} \rightrightarrows \{x\})$. This is a presentation of the action of  $\G^\tau$ on $\X^\tau|_x$.
Then, by the description of $\Lambda^\tau_v$ in the proof of Proposition \ref{polyprop}, 
\begin{align*}
\Lambda_v^\tau & = \partial^\tau \Big(\ker(Q \to \stab_{N^\tau}(x))\Big) \\
& = \partial^\tau \Big(\bigoplus_{i=1,\dots, n} \Z q_i \Big) \\
& = \bigoplus_{i=1,\dots, n} \Z \lambda_i^\tau.
\end{align*}
From this it follows the subgroup of $\partial(Q)$ attached to any facet $f_i$ is indeed $\Lambda^\tau_i$.
\end{proof}

Recall from Remark \ref{toricorbifold} that a compact toric symplectic stack $(\X,\bomega,\G, \bmu)$ is a compact toric symplectic orbifold if $\G\cong \R^n/\Z^n$ and $\X$ is an effective proper stack. An \emph{ineffective compact toric symplectic orbifold} is a compact toric symplectic stack of the form
\[
(\B(R\rightrightarrows \star) \times \X, \bomega, \B(R\rightrightarrows \star) \times \G, \bmu),
\]
where $(\X,\bomega,\G, \bmu)$ is a compact toric symplectic orbifold and $R$ is a finitely generated abelian group.  Combining Proposition \ref{deformprop} and Theorem \ref{deformations} gives the following. 

\begin{corollary}
Any compact toric symplectic stack admits a deformation to an ineffective compact toric symplectic orbifold.
\end{corollary}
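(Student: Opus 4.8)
The plan is to combine the classification Theorem~\ref{maintheorem} with the perturbation result Proposition~\ref{deformprop} and the Delzant-type realization Theorem~\ref{deformations}, and then to identify two fibers of the resulting family of toric stacks. First I would replace the given compact toric symplectic stack $(\X,\bomega,\G,\bmu)$ by its decorated stacky moment polytope $(\Delta,\G,\{\Lambda_f\}_{f\in F})$, writing $\G=\B(\partial\colon Q\to\R^n)$ with $Q\cong R\times\Z^m$ and $R$ the torsion subgroup of $Q$. Choosing, for each facet $f$, a generator $q_f\in Q$ of a preimage of $\Lambda_f$ and tracking the supporting hyperplanes of $\Delta$, Proposition~\ref{deformprop} yields a deformation of stacky moment polytopes
\[
(\Delta^\tau,\G^\tau,\{\Lambda_f^\tau\}_{f\in F},\{q_f\}_{f\in F^{max}},\{L_f^\tau\}_{f\in F^{max}}),\qquad\tau\in(0,1),
\]
with underlying deformation of stacky tori $\G=\B(\partial^\bu\colon Q\to\R^n)$, so that at one value $\tau_0\in(0,1)$ the data coincides with $(\Delta,\G,\{\Lambda_f\})$, while at another value $\tau_1\in(0,1)$ the polytope $\Delta^{\tau_1}$ is rational with respect to $\Z^n$ and $\partial^{\tau_1}(Q)=\Z^n$. (That every $(\Delta^\tau,\G^\tau,\{\Lambda_f^\tau\})$ remains a decorated stacky moment polytope in the sense of Definition~\ref{stackypoly} uses that $\Delta$ is simple, so its face poset is stable under small perturbations of the facet normals, and that the conditions of Definition~\ref{stackypoly} are open.)

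Feeding this deformation into Theorem~\ref{deformations} produces a deformation of toric stacks $(\X,p_\X\colon\X\to(0,1),\G,\stackmorphism{\pi},\bmu)$ whose fiber $\X^\tau$ has decorated stacky moment polytope $(\Delta^\tau,\G^\tau,\{\Lambda_f^\tau\})$ for each $\tau$. By the uniqueness half of Theorem~\ref{maintheorem}, the fiber $\X^{\tau_0}$ is equivalent, as a compact toric symplectic stack, to $(\X,\bomega,\G,\bmu)$, so the family is a deformation of the given stack, and it suffices to show that $\X^{\tau_1}$ is an ineffective compact toric symplectic orbifold.

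To see this, note that since $R$ is torsion and $\R^n$ is torsion-free, $\partial^{\tau_1}$ annihilates $R$ and factors as $Q\twoheadrightarrow Q/R\cong\Z^m\twoheadrightarrow\Z^n=\partial^{\tau_1}(Q)$; hence $R':=\ker\partial^{\tau_1}$ is a finitely generated abelian group, and taking the lattice $\Z^n\subset\R^n$ in the construction of a $2$-torus shows $\G^{\tau_1}=\B(\partial^{\tau_1}\colon Q\to\R^n)$ is equivalent to $\B(R'\rightrightarrows\star)\times(\R^n/\Z^n)$. Let $(\X',\bomega',\R^n/\Z^n,\bmu')$ be the compact toric symplectic orbifold with decorated stacky moment polytope $(\Delta^{\tau_1},\R^n/\Z^n,\{\Lambda_f^{\tau_1}\})$, which exists by Theorem~\ref{maintheorem} applied to the torus $\R^n/\Z^n$; it is a proper \'etale stack because, by the local normal form Theorem~\ref{localnormalform}, near each vertex $v$ it is presented by $(\Z^n/\Lambda_v^{\tau_1})\ltimes E_0$ with $\Lambda_v^{\tau_1}$ a full-rank sublattice of $\Z^n$, so $\Z^n/\Lambda_v^{\tau_1}$ is finite and acts effectively on the toric chart $E_0$, and the corresponding facet labels are the Lerman--Tolman labels of Remark~\ref{toricorbifold}. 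Since adjoining $R'$ to the kernel of the quasilattice does not change the subgroups $\Lambda_f\subseteq\partial(Q)$, the product $\B(R'\rightrightarrows\star)\times(\X',\bomega',\R^n/\Z^n,\bmu')$ is a compact toric symplectic stack with the same decorated stacky moment polytope $(\Delta^{\tau_1},\G^{\tau_1},\{\Lambda_f^{\tau_1}\})$ as $\X^{\tau_1}$, so by the injectivity half of Theorem~\ref{maintheorem} it is equivalent to $\X^{\tau_1}$. Thus $\X^{\tau_1}$ is an ineffective compact toric symplectic orbifold.

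The main obstacle I anticipate is this last step: making rigorous that $\X^{\tau_1}$ splits off the ineffective gerbe factor $\B(R'\rightrightarrows\star)$ and that the remaining factor is a genuine (proper \'etale, effective) toric orbifold in the sense of Lerman and Tolman. Using the classification bijection to match decorated stacky moment polytopes replaces a direct geometric splitting argument, but one still has to check carefully that $\B(R'\rightrightarrows\star)\times\X'$ has precisely the decorated stacky moment polytope of $\X^{\tau_1}$, and --- a point that really belongs to Proposition~\ref{deformprop} --- that the perturbation of the quasilattice can be arranged so that $\partial^{\tau_1}(Q)$ is exactly $\Z^n$ rather than merely a finite-index sublattice of $\Z^n$.
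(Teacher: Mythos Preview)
Your argument is correct and follows the same route the paper takes: the paper's proof is the single sentence ``Combining Proposition~\ref{deformprop} and Theorem~\ref{deformations} gives the following,'' and you have simply unpacked what that combination entails. In particular, your explicit appeal to the uniqueness half of Theorem~\ref{maintheorem} to identify the fiber $\X^{\tau_0}$ with the original stack, and again to identify $\X^{\tau_1}$ with a product $\B(R'\rightrightarrows\star)\times\X'$, is exactly the sort of step the paper leaves implicit.

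Two minor remarks on the concerns you flag at the end. First, the worry that $\partial^{\tau_1}(Q)$ might only be a finite-index sublattice of $\Z^n$ is already handled by the statement of Proposition~\ref{deformprop}, which asserts precisely that the image of $\partial'$ is $\Z^n$; so this is not an additional hypothesis you need to arrange. Second, your check that the quotient factor $\X'$ is a genuine (effective, proper, \'etale) toric orbifold via the local normal form is correct and is the right way to see it: with $\ker\partial=0$ the local model groupoid $(\Z^n/\Lambda_v)\ltimes E_0$ has finite isotropy acting effectively through the torus $G_0$, so the Lerman--Tolman picture of Remark~\ref{toricorbifold} applies.
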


\bibliographystyle{amsplain}

\bibliography{hamilton}

\def\cprime{$'$}
\providecommand{\bysame}{\leavevmode\hbox to3em{\hrulefill}\thinspace}
\providecommand{\MR}{\relax\ifhmode\unskip\space\fi MR }
% \MRhref is called by the amsart/book/proc definition of \MR.
\providecommand{\MRhref}[2]{%
  \href{http://www.ams.org/mathscinet-getitem?mr=#1}{#2}
}
\providecommand{\href}[2]{#2}
\begin{thebibliography}{10}

\bibitem{battaglia-prato-zaffran;one-parameter}
F.~Battaglia, E.~Prato, and D.~Zaffran, \emph{Hirzebruch surfaces in a
  one\textendash{}parameter family}, Bollettino dell'Unione Matematica Italiana
  \textbf{12} (2019), 293--305.

\bibitem{behrend-xu;stacks-gerbes}
K.~Behrend and P.~Xu, \emph{Differentiable stacks and gerbes}, J. Symplectic
  Geom. \textbf{9} (2011), no.~3, 285--341.

\bibitem{berwick-evans-lerman;lie-2-algebras-vector}
D.~Berwick-Evans and E.~Lerman, \emph{Lie $2$-algebras of vector fields},
  eprint, 2016, \path{arXiv:1609.03944}.

\bibitem{bursztyn-noseda-zhu;principal-stacky-groupoids}
H.~Bursztyn, F.~Noseda, and C.~Zhu, \emph{Principal actions of stacky {L}ie
  groupoids}, eprint, 2015, \path{arXiv:1510.09208}.

\bibitem{crainicDeformationsLieGroupoids2015}
M.~Crainic, J.~Mestre, and I.~Struchiner, \emph{Deformations of {{Lie}}
  groupoids}, arXiv:1510.02530 (2015).

\bibitem{crainic-moerdijk;foliation-cyclic}
M.~Crainic and I.~Moerdijk, \emph{Foliation groupoids and their cyclic
  homology}, Adv. Math. \textbf{157} (2001), no.~2, 177--197.

\bibitem{delzant;hamiltoniens-periodiques}
T.~Delzant, \emph{Hamiltoniens p\'eriodiques et images convexes de
  l'application moment}, Bull. Soc. Math. France \textbf{116} (1988), no.~3,
  315--339.

\bibitem{geraschenko-satriano;toric}
A.~Geraschenko and M.~Satriano, \emph{Toric stacks {I}: {T}he theory of stacky
  fans}, Trans. Amer. Math. Soc. \textbf{367} (2015), no.~2, 1033--1071.

\bibitem{geraschenko-satriano;toric2}
\bysame, \emph{Toric stacks {II}: {I}ntrinsic characterization of toric
  stacks}, Trans. Amer. Math. Soc. \textbf{367} (2015), no.~2, 1073--1094.

\bibitem{giraud;cohomology-nonabeliene}
J.~Giraud, \emph{Cohomologie non abelienne}, Grundlehren der mathematischen
  Wissenschaften, vol. 179, Springer-Verlag, Berlin Heidelberg, 1971.

\bibitem{hepworth;vector-flow-stack}
R.~Hepworth, \emph{Vector fields and flows on differentiable stacks}, Theory
  Appl. Categ. \textbf{22} (2009), 542--587.

\bibitem{hilgert-neeb-plank;symplectic-convexity;compositio}
J.~Hilgert, K.-H. Neeb, and W.~Plank, \emph{Symplectic convexity theorems and
  coadjoint orbits}, Compositio Math. \textbf{94} (1994), no.~2, 129--180.

\bibitem{hoffman-sjamaar;hamiltonian-stack}
B.~Hoffman and R.~Sjamaar, \emph{Stacky hamiltonian actions and symplectic
  reduction}, eprint, 2018, with an appendix by C. Zhu,
  \path{arXiv:1808.01003}. To appear in International Mathematics Research
  Notices.

\bibitem{katzarkovDefinitionNoncommutativeToric2014}
L.~Katzarkov, E.~Lupercio, L.~Meersseman, and A.~Verjovsky, \emph{The
  definition of a non-commutative toric variety}, Contemporary {{Mathematics}}
  (U.~Tillmann, S.~Galatius, and D.~Sinha, eds.), vol. 620, {American
  Mathematical Society}, {Providence, Rhode Island}, 2014, pp.~223--250 (en).

\bibitem{lerman;orbifolds-as-stacks}
E.~Lerman, \emph{Orbifolds as stacks?}, Enseign. Math. (2) \textbf{56} (2010),
  no.~3-4, 315--363.

\bibitem{lerman-malkin;deligne-mumford}
E.~Lerman and A.~Malkin, \emph{Hamiltonian group actions on symplectic
  {D}eligne-{M}umford stacks and toric orbifolds}, Adv. Math. \textbf{229}
  (2012), no.~2, 984--1000.

\bibitem{lerman-tolman;hamiltonian-torus-actions-symplectic-orbifolds}
E.~Lerman and S.~Tolman, \emph{Hamiltonian torus actions on symplectic
  orbifolds and toric varieties}, Trans. Amer. Math. Soc. \textbf{349} (1997),
  no.~10, 4201--4230.

\bibitem{lin-sjamaar;presymplectic}
Y.~Lin and R.~Sjamaar, \emph{Convexity properties of presymplectic moment
  maps}, J. Symplectic Geom., accepted for publication,
  \path{arXiv:1706.00520}.

\bibitem{metzler;topological-smooth-stack}
D.~Metzler, \emph{Topological and smooth stacks}, eprint, 2003,
  \path{arXiv:math/0306176}.

\bibitem{moerdijk-mrcun;foliations-groupoids}
I.~Moerdijk and J.~Mr\v{c}un, \emph{Introduction to foliations and {L}ie
  groupoids}, Cambridge Studies in Advanced Mathematics, vol.~91, Cambridge
  University Press, Cambridge, 2003.

\bibitem{moerdijkOrbifoldsSheavesGroupoids1997}
I.~Moerdijk and D.~Pronk, \emph{Orbifolds , {{Sheaves}} and {{Groupoids
  Dedicated}} to the memory of}, K-Theory \textbf{12} (1997), 3--21.

\bibitem{noohi;foundations-topological-stacks}
B.~Noohi, \emph{Foundations of topological stacks}, eprint, 2005,
  \path{arXiv:math/0503247}.

\bibitem{prato;non-rational-symplectic}
E.~Prato, \emph{Simple non-rational convex polytopes via symplectic geometry},
  Topology \textbf{40} (2001), no.~5, 961--975.

\bibitem{pronkEtenduesStacksBicategories1996}
D.~Pronk, \emph{Etendues and stacks as bicategories of fractions}, Compositio
  Mathematica \textbf{102} (1996), no.~3, 243--303 (en).

\bibitem{ratiu-zung;presymplectic}
T.~Ratiu and N.~T. Zung, \emph{Presymplectic convexity and (ir)rational
  polytopes}, eprint, 2017, \path{arXiv:1705.11110}.

\bibitem{thurston;alexander}
W.~Thurston~(\path{https://mathoverflow.net/users/9062/bill-thurston}),
  \emph{Complement of a totally disconnected set in the plane}, Math{O}verflow
  post, 2011,
  \path{https://mathoverflow.net/questions/55718/complement-of-a-totally-disconnected-closed-set-in-the-plane}.

\bibitem{trentinaglia-zhu;strictification}
G.~Trentinaglia and C.~Zhu, \emph{Strictification of \'etale stacky {L}ie
  groups}, Compos. Math. \textbf{148} (2012), no.~3, 807--834.

\end{thebibliography}

\end{document}